\def\Gl{\rm Gl}
\def\spa{{\rm span}}
\newcommand{\set}[1]{\left\{{#1}\right\}}
\newcommand{\Aut}{\operatorname{Aut}}
\newcommand{\R}{\mathbb{R}}
\newcommand{\Z}{\mathbb{Z}}
\newcommand{\C}{\mathbb{C}}
\newcommand{\N}{\mathbb{N}}
\newcommand{\B}{\mathbb{B}}
\newcommand{\hol}{\mathcal{O}}
\newcommand{\paren}[1]{\left( {#1}\right)}
\newcommand{\hut}{\Check}
\newcommand{\Sing}{\operatorname{Sing}}
\newcommand{\aut}{\operatorname{Aut}}
\renewcommand{\div}{\operatorname{div}}
\newtheorem{definition}{Definition}[section]
\newtheorem{theorem}{Theorem}[section]
\newtheorem*{theorem*}{Theorem}
\newtheorem{corollary}{Corollary}[section]
\newtheorem{proposition}{Proposition}[section]
\newtheorem{lemma}{Lemma}[section]
\newtheorem*{lemma*}{Lemma}
\theoremstyle{remark}
\newtheorem{notation}{Remark}
\newtheorem{example}{Example}
\begin{document}

\title[ Families of  embeddings and of group actions 
]{Holomorphic families of non-equivalent embeddings and of holomorphic group actions on affine space}

\author{FRANK KUTZSCHEBAUCH}

\address{Institute of Mathematics, University of Bern \\ Sidlerstrasse 5, 
CH-3012 Bern, Switzerland} \email{frank.kutzschebauch@math.unibe.ch}

\author{SAM LODIN}

\address{Dept. of Natural Sciences, Engineering and Mathematics, Mid Sweden University 
\\ SE-851 70 Sundsvall, Sweden} \email{sam.lodin@miun.se}

\thanks{The first author
supported by Schweizerische Nationalfonds grant 200021-116165/1}

\begin{abstract}
We construct holomorphic families of proper holomorphic embeddings of $\C^k$ into $\C^n$
($0<k<n-1$), so that for any two different parameters in the family no holomorphic automorphism of $\C^n$ can map the image of the corresponding two embeddings onto each other. As an application to the study of the group of holomorphic automorphisms of $\C^n$ we derive the existence of families of holomorphic $\C^*$-actions on $\C^n$ ($n\ge 5$)
so that different actions in the family are not conjugate. This result is surprising in view of the long standing
Holomorphic Linearization Problem, which in particular asked whether there would be more than one conjugacy class of $\C^*$  actions on $\C^n$ (with prescribed linear part at a fixed point).
\end{abstract}

\keywords{Complex Analysis; Proper Holomorphic Embeddings; Equivalent 
Embeddings; Eisenman Hyperbolicity; Complex Euclidean Spaces, Holomorphic Automorphisms, Group Actions, Oka-principle, Andersen-Lempert-theory}

\subjclass[2000]{Primary 32M05, 32H02; Secondary 32Q28, 32Q40, 32Q45.}

\maketitle

\section{Introduction and statement of the main results.}

It is a famous theorem of Remmert that any Stein manifold of dimension 
$n$ admits a proper holomorphic embedding into affine $N$-space $\C^N$ of 
sufficiently high dimension $N$~\cite{R}. Concerning this dimension,
Eliashberg, Gromov~\cite{EG} and Sch\"urmann~\cite{S} proved that any Stein 
manifold of dimension $n>1$ can be embedded into $\C^{[3n/2]+1}$. 
A key ingredient in these results is the homotopy principle for holomorphic 
sections of elliptic submersions over Stein manifolds \cite{Gro}, \cite{FP2}, \cite{FF2}. 
These dimensions are the smallest possible due to an example of 
Forster~\cite{Fs1}. The optimal dimension for  embeddings of Stein spaces can 
be found in Sch\"urmann's paper~\cite{S}.

In this paper we do not investigate the question whether a given Stein space 
can be embedded into $\C^N$ for a given dimension $N$, but rather we 
investigate in how many ways this can be done in situations where at least one 
embedding exists. More precisely, we study the number of equivalence classes of 
proper holomorphic embeddings $\Phi\colon X\hookrightarrow\C^n$ with respect to the 
following equivalence relation:

\begin{definition}\label{def-eq-emb}
Two embeddings $\Phi,\Psi\colon X\hookrightarrow\C^n$ are {\it equivalent} if there 
exist automorphisms $\varphi\in\Aut(\C^n)$ and $\psi\in\Aut(X)$ such that 
$\varphi\circ\Phi=\Psi\circ\psi$.
\end{definition}

In the algebraic case the question about the number of classes of equivalent 
embeddings $\C^k\hookrightarrow\C^n$ is well known and has been studied for a long 
time. The most famous result, due to Abhyankar and Moh~\cite{AM}, states that 
every polynomial embedding of $\C$ into $\C^2$ is equivalent to the standard 
embedding. The same is in general true for high codimension, Kaliman~\cite{Ka} 
proved that if $X$ is an affine algebraic variety and 
$n\geq\max\set{1+2\dim X,\dim TX}$ then all polynomial embeddings of $X$ into 
$\C^n$ are equivalent (by means of algebraic automorphisms). In the same paper 
Kaliman also proved that any polynomial embedding of $\C$ into $\C^3$ is 
holomorphically equivalent to the standard embedding. It is still an open 
question if this holds algebraically.

In the holomorphic case the situation is different. Rosay and Rudin \cite{RR2} 
were the first to construct non-standard embeddings of $\C$ into $\C^n$, $n>2$, 
thus showing that the number of equivalence classes is at least two. 
Forstneri\v c, Globevnik and Rosay~\cite{FGR} showed that the result of Rosay 
and Rudin also holds for $n=2$. More generally, Forstneri\v c~\cite{F} showed 
that the number of equivalence classes of embeddings $\C^k$ into $\C^n$ is at 
least two for any $0<k<n$. Later, Derksen and the first author~\cite{DK} 
proved that there are uncountably many non-equivalent embeddings of $\C$ into 
$\C^n$ for $n>1$. Their result heavily uses the fact that the holomorphic 
automorphism group of $\C$ is a Lie group, i.e., it is very small in comparison 
to the automorphism group of complex Euclidean spaces in  dimensions greater than $1$.
Combining the ideas of that paper with the cancellation property for Eisenman hyperbolic spaces
Borell and the first author then proved (see \cite{BK}) that the number of equivalence 
classes of proper holomorphic embeddings of $\C^k$ into $\C^n$ is uncountable 
for any $0<k<n$. The last two above mentioned results are proven by using the Cantor diagonal process and
it remained still an unsolved challenging problem whether non-equivalent embeddings could occur in continuous
or even holomorphic families. 

Our first main result gives an affirmative answer.

\begin{theorem}\label{main}
Let $X$ be a complex space, which can be embedded into $\C^n$ and
such that the group of holomorphic automorphisms
$\aut_{\rm{hol}}(X)$ is a Lie group. Then there exist, for
$k=n-1-\dim X$,  a family of holomorphic embeddings of $X$ into
$\C^n$ parametrized by $\C^k$, such that for different parameters
$w_1\neq w_2\in \C^k$ the embeddings
$\varphi_{w_1},\varphi_{w_2}:X \hookrightarrow \C^n$ are
non-equivalent.
\end{theorem}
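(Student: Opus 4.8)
The plan is to construct the family by a parametrized Andersen--Lempert construction and then to separate its members by a biholomorphic invariant of the complement that records the parameter $w$ in a manner the finite-dimensional group $\aut_{\rm{hol}}(X)$ cannot undo. Write $d=\dim X$, so that $n=d+k+1$ and each embedding has codimension $k+1\ge 2$; this spare codimension is exactly what provides the analytic flexibility below. Fix once and for all a discrete sequence of marked points $p_1,p_2,\dots\in X$ tending to infinity, together with a target configuration depending holomorphically on $w\in\C^k$ that will be realized in the images $\varphi_w(p_j)$.

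First I would produce a jointly holomorphic map $\Phi\colon X\times\C^k\to\C^n$, $(x,w)\mapsto\varphi_w(x)$, with every slice $\varphi_w$ a proper holomorphic embedding. This is done inductively: using the homotopy principle for sections of elliptic submersions over Stein manifolds (the parametrized Oka principle) together with the parametrized form of Andersen--Lempert theory, one builds a sequence of holomorphic families of automorphisms of $\C^n$, depending holomorphically on the whole of $\C^k$, that successively move the images $\varphi_w(p_j)$ into their prescribed $w$-dependent positions while pushing the already-fixed part of the embedding towards infinity. Carrying the usual convergence estimates through the parameter, the limit $\Phi$ is jointly holomorphic, each $\varphi_w$ is a proper embedding, and near each $\varphi_w(p_j)$ the complement $\C^n\setminus\varphi_w(X)$ acquires a prescribed local hyperbolic geometry.

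Next I would attach to each embedding a canonical invariant. By Eisenman hyperbolicity and the cancellation property for Eisenman hyperbolic spaces (as in \cite{BK}), the relevant local measurements of $\C^n\setminus\varphi_w(X)$ are genuine biholomorphic invariants of the pair $(\C^n,\varphi_w(X))$. Transporting them back by $\varphi_w^{-1}$ yields a function $\kappa_w$ on $X$ canonically associated with the embedding. If $\varphi_{w_1}$ and $\varphi_{w_2}$ were equivalent, say $\varphi\circ\varphi_{w_1}=\varphi_{w_2}\circ\psi$ with $\varphi\in\Aut(\C^n)$ and $\psi\in\aut_{\rm{hol}}(X)$, then $\varphi$ would restrict to a biholomorphism of the complements carrying one invariant onto the other, forcing $\kappa_{w_1}=\kappa_{w_2}\circ\psi$. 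It then remains to arrange the target configuration so that for all $w_1\ne w_2$ no $\psi\in\aut_{\rm{hol}}(X)$ can satisfy this identity. This is precisely where the hypothesis that $\aut_{\rm{hol}}(X)$ is a Lie group enters: being finite-dimensional, its orbits in the space of configurations on the countable marked set are finite-dimensional, whereas encoding along infinitely many points tending to infinity has more than enough freedom to make $w\mapsto[\kappa_w]$ injective modulo the group. This is the holomorphic-family analogue of the rigidity argument of \cite{DK}.

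The main obstacle is the first step: running the Andersen--Lempert and Oka machinery with holomorphic dependence on all of $\C^k$ simultaneously, so that the limit is jointly holomorphic, every slice is at once a proper embedding, and the prescribed $w$-dependent marked data is realized with enough precision for the invariant and the rigidity argument to apply. By contrast, the bookkeeping that defeats $\aut_{\rm{hol}}(X)$ for every pair $w_1\ne w_2$, though delicate, becomes essentially combinatorial once the correct invariant $\kappa_w$ is isolated and the finite-dimensionality of $\aut_{\rm{hol}}(X)$ is invoked.
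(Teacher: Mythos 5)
There is a genuine gap, and it sits exactly where you wave your hands: the separating invariant $\kappa_w$ is never constructed, and no construction along the lines you sketch is available. The Eisenman $n$-measure lives on the open set $\C^n\setminus\varphi_w(X)$, so ``transporting it back by $\varphi_w^{-1}$'' to get a function on $X$ is not a defined operation; ``prescribed local hyperbolic geometry near $\varphi_w(p_j)$'' is likewise not a quantity one can prescribe, since locally near a point of the image the complement is just a ball minus a $(\dim X)$-dimensional slice, the same for every $w$. In the paper, Eisenman hyperbolicity of the complement (Theorem \ref{mainadd}) plays \emph{no} role in the proof of Theorem \ref{main}; it is established as an add-on and used only later, via the cancellation property (Proposition \ref{cancel}), to cross the construction with $\C^l$. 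It is not, and cannot easily be made into, a $w$-detecting invariant.

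The deeper problem is that your argument never engages the actual enemy. An equivalence is $\varphi\circ\Phi=\Psi\circ\psi$ with $\varphi\in\Aut(\C^n)$, and $\Aut(\C^n)$ is infinite-dimensional; the finite-dimensionality of $\aut_{\rm hol}(X)$ alone cannot rule it out by any orbit-dimension count. The paper's mechanism is in three steps, none of which your proposal contains. First, growth restrictions: the embeddings are pushed through points $a^\mu_j$ on the spheres $\partial(\mu+1)\B_n$ whose preimages $x^\mu_j$ are chosen so far out in $X$ that they escape $t(P_2(\phi_{\mu-1}^{-1}(\mu\overline\B_k\times\mu\overline\B_n)))$ for all $t$ in the compact $T_\mu\subset\Aut(X)$ --- this is precisely where the Lie group hypothesis enters (exhaustion of $\Aut(X)$ by compacts), and via Lemma \ref{lemma4.3copy} and property $2_\mu$ it forces any candidate map $F$ between the complements to satisfy $F(\beta\nu\B_n)\subset(\nu+2)\B_n$ for all large $\nu$, hence to be \emph{affine}. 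Second, only finitely many $w$-dependent points $\xi_1(w),\dots,\xi_m(w)$ are marked, not by metric data but by making the image osculate to order $l$ exactly there and nowhere else --- an affine-invariant marking, so the affine $F$ must carry $\{\xi_i(w_1)\}$ onto $\{\xi_i(w_2)\}$. Third, Proposition \ref{pointprop} supplies configurations for which no affine map can do this when $w_1\neq w_2$. Your first step (the parametrized Anders\'en--Lempert construction of the family) matches the paper, but replacing steps one through three by an undefined invariant plus a dimension count is not a repairable shortcut: without first reducing to affine maps, there is no finite-dimensional group to count against.
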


\begin{notation}
Observe that for $k=0$, $\dim X=n-1$ the conclusion of the theorem
is empty. In this situation it is still known that there are uncountably many equivalence classes
of embeddings by the above  mentioned results from \cite{BK}.
\end{notation}

We would like to emphasize that there is another (weaker) definition of 
equivalence --- called $\Aut(\C^n)$-equivalence --- which is used by several 
authors, e.g., Buzzard, Forstneri\v c, Globevnik and Varolin. In these papers 
uncountability of certain equivalence classes of embeddings in this weaker 
sense is proved. Our main result is much stronger than these results. In our 
definition, two embeddings $\Phi,\Psi\colon X\hookrightarrow\C^n$ are equivalent if 
their {\bf images} coincide modulo $\Aut(\C^n)$, i.e., if there is an automorphism 
$\varphi\in\Aut(\C^n)$ such that the  images of $\varphi\circ\Phi$ and $\Psi$ 
coincide. In such a situation, the map $\Psi^{-1}\circ\varphi\circ\Phi$ is well 
defined and it is an automorphism of $X$. The weaker notion mentioned above 
demand that $\varphi\circ\Phi$ and $\Psi$ are equal as maps, i.e., it demands 
that $\Psi^{-1}\circ\varphi\circ\Phi$ is the identity on $X$.  Our application 
to group actions would not work for the weaker definition.

Using the cancellation property for Eisenman hyperbolic spaces we can cross our situation with some
affine space and we are able to conclude

\begin{theorem}  \label{main1}(see Corollary \ref{main corollary})
There exist, for $k=n-l-1$,  a family of holomorphic embeddings of
$\C^l$ into $\C^n$ parametrized by $\C^k$, such that for
different parameters $w_1\neq w_2\in \C^k$ the embeddings
$\psi_{w_1},\psi_{w_2}:\C^l \hookrightarrow \C^{n}$ are
non-equivalent.
\end{theorem}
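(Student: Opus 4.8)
The plan is to reduce Theorem \ref{main1} to Theorem \ref{main} applied to the one-dimensional base $X=\C$, and then to enlarge the target by taking a product with an affine factor, propagating non-equivalence by means of the cancellation property for Eisenman hyperbolic spaces. Set $m=n-l+1$. Since $\C$ embeds properly into $\C^m$ and $\aut_{\mathrm{hol}}(\C)=\set{z\mapsto az+b:a\in\C^{*},\,b\in\C}$ is a (two-dimensional complex) Lie group, Theorem \ref{main} applies with $X=\C$: it produces a family of holomorphic embeddings $\varphi_w:\C\hookrightarrow\C^m$ parametrized by $w\in\C^{k}$, where $k=m-1-\dim\C=m-2=n-l-1$, such that $\varphi_{w_1}$ and $\varphi_{w_2}$ are non-equivalent whenever $w_1\neq w_2$. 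For $l=1$ this is already the assertion, so I assume $l\geq 2$ from now on.

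Next I would define the candidate family by crossing with the identity on the complementary affine factor,
\[
\psi_w:=\varphi_w\times\mathrm{id}_{\C^{l-1}}\colon \C^l=\C\times\C^{l-1}\hookrightarrow \C^m\times\C^{l-1}=\C^n.
\]
Each $\psi_w$ is a proper holomorphic embedding (properness of $\varphi_w$ passes to the product), and $w\mapsto\psi_w$ is holomorphic because $w\mapsto\varphi_w$ is; thus $\set{\psi_w}_{w\in\C^{k}}$ is a holomorphic family of embeddings of $\C^l$ into $\C^n$, as required.

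It remains to establish non-equivalence. Suppose, for contradiction, that $\psi_{w_1}$ and $\psi_{w_2}$ are equivalent for some $w_1\neq w_2$. By Definition \ref{def-eq-emb} there is $F\in\Aut(\C^n)$ with $F(\psi_{w_1}(\C^l))=\psi_{w_2}(\C^l)$, so $F$ restricts to a biholomorphism of the complements. Writing $M_w:=\C^m\setminus\varphi_w(\C)$, the complement of $\psi_w(\C^l)$ in $\C^n$ is exactly $M_w\times\C^{l-1}$, whence
\[
M_{w_1}\times\C^{l-1}\cong M_{w_2}\times\C^{l-1}.
\]
The construction underlying Theorem \ref{main}, following \cite{DK} and \cite{BK}, can be arranged so that each $M_w$ is Eisenman hyperbolic in its top dimension $m$ and so that the non-equivalence of $\varphi_{w_1},\varphi_{w_2}$ is detected already by a biholomorphic invariant of the complement, giving $M_{w_1}\not\cong M_{w_2}$ for $w_1\neq w_2$. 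Applying the cancellation property for Eisenman hyperbolic spaces to the displayed biholomorphism cancels the affine factor $\C^{l-1}$ and yields $M_{w_1}\cong M_{w_2}$, a contradiction; hence $\psi_{w_1}$ and $\psi_{w_2}$ are non-equivalent.

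The main obstacle is the cancellation step. One must verify that $M_w$ is genuinely measure (Eisenman-$m$-) hyperbolic and that, in the product $M_w\times\C^{l-1}$, the affine directions lie in the kernel of the top Eisenman measure, so that this intrinsic measure singles out the $M_w$-directions. Any biholomorphism must then respect the resulting foliation by affine factors and descend to a biholomorphism of the quotients $M_{w_1}\cong M_{w_2}$. Making this precise — locating the hyperbolic core, controlling the degeneracy locus of the Eisenman measure on the product, and checking that the construction of Theorem \ref{main} indeed produces complements with the required measure-hyperbolicity together with a separating biholomorphic invariant — is where the real work lies; the embedding and family-holomorphicity assertions are immediate by comparison.
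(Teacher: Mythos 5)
Your proposal follows essentially the paper's own route (Theorem \ref{cross} specialized to Corollary \ref{main corollary}): apply Theorem \ref{main} to $X=\C\hookrightarrow\C^{m}$ with $m=n-l+1$, cross with $\C^{l-1}$, and kill the affine factor using Eisenman hyperbolicity of the complements together with the cancellation theorem (Proposition \ref{cancel}). Two remarks on where your write-up and the paper diverge. First, the Eisenman $m$-hyperbolicity of $M_w=\C^m\setminus\varphi_w(\C)$ is not an extra property one has to ``arrange'': it is Theorem \ref{mainadd}, and it falls out of the growth conditions ($2_{\mu}$) already built into the inductive construction of Theorem \ref{main}; likewise the cancellation statement is the cited result of Za\u\i denberg \cite{Z}, so neither of these is really ``where the real work lies.'' Second, your contradiction passes through the intermediate claim $M_{w_1}\not\cong M_{w_2}$ as abstract complex manifolds, which you assert (``can be arranged so that \ldots detected by a biholomorphic invariant of the complement'') but do not derive; Theorem \ref{main} as stated only excludes ambient automorphisms of $\C^m$ carrying one image onto the other. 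The way to close the argument --- and what the paper does --- is to note that the cancellation produces a biholomorphism $M_{w_1}\to M_{w_2}$ of the complements, which extends by Riemann's removable singularity theorem (the curves $\varphi_w(\C)$ have codimension $m-1\ge 2$ in $\C^m$) to an automorphism of $\C^m$ mapping $\varphi_{w_1}(\C)$ onto $\varphi_{w_2}(\C)$, contradicting Theorem \ref{main} directly. The same extension argument also justifies your intermediate claim, so the gap is repairable rather than fatal, but as written that step is missing.
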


We also give an application of Theorem \ref{main} to actions of compact (or equivalently complex
reductive, see \cite{Ku}) groups on $\C^n$. It was a long standing problem whether all holomorphic actions of such groups on affine space are linear after a change of variables (see for example the overview article \cite{Hu}). The first counterexamples to that (Holomorphic Linearization) problem were constructed by Derksen and the first author in \cite{DK1}. In the present paper we show that the method from there is holomorphic in a parameter and therefore  applied to our parametrized situation leads to

\begin{theorem} \label{action1}
For any $n\ge 5$ there is a holomorphic family of $\C^*$-actions on $\C^n$ parametrized by $\C^{n-4}$
$$\C^{n-4} \times \C^* \times \C^n \to \C^n \quad(w, \theta, z) \mapsto \theta_w (z)$$ so that for
different parameters $w_1\neq w_2\in \C^{n-4}$ there is no equivariant isomorphism between the actions $\theta_{w_1}$ and  $\theta_{w_2}$.
\end{theorem}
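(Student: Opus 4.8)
The plan is to deduce Theorem \ref{action1} from the parametrized embedding result of Theorem \ref{main} (in the form of Theorem \ref{main1}) by running the construction of Derksen and the first author from \cite{DK1} \emph{fiberwise} over the parameter space $\C^{n-4}$, and checking that every step is holomorphic in the parameter $w$. The starting point is that the counterexamples to holomorphic linearization in \cite{DK1} are built from a non-standard proper embedding of a lower-dimensional Euclidean space into a linear slice, out of which one manufactures a $\C^*$-action whose conjugacy class detects the equivalence class of the embedding. Concretely, I would apply Theorem \ref{main1} with $l$ and $n$ chosen so that $k=n-l-1=n-4$, i.e. take $l=3$, giving a family $\psi_w\colon\C^3\hookrightarrow\C^{n-1}$ of pairwise non-equivalent proper embeddings parametrized by $w\in\C^{n-4}$. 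The idea is that the fixed-point set (or a distinguished invariant subvariety) of the resulting $\C^*$-action $\theta_w$ on $\C^n$ is precisely the image of $\psi_w$, so that an equivariant isomorphism conjugating $\theta_{w_1}$ to $\theta_{w_2}$ would restrict to an automorphism of $\C^n$ (or $\C^{n-1}$) carrying one embedded image onto the other, forcing equivalence of the embeddings.

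The key steps, in order, are as follows. First, I would recall the precise recipe of \cite{DK1}: given a proper holomorphic embedding of $\C^l$ into a coordinate hyperplane, one produces a $\C^*$-action on $\C^n$ (with a prescribed linear part at the origin) whose geometry encodes the embedded image as an invariant analytic set. Second, I would verify that this recipe is natural in a holomorphic parameter: since $\psi_w$ depends holomorphically on $w$, and each intermediate construction (the defining equations, the vector field generating the action, its time-$\theta$ flow) is assembled from $\psi_w$ by holomorphic operations, the resulting map $\C^{n-4}\times\C^*\times\C^n\to\C^n$, $(w,\theta,z)\mapsto\theta_w(z)$, is holomorphic and restricts for each fixed $w$ to a genuine $\C^*$-action. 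Third, I would establish the \emph{invariant} attached to the action: identify an intrinsic, conjugation-invariant geometric datum of $\theta_w$ — such as its fixed-point set together with the induced embedding — and show it recovers the image $\psi_w(\C^l)$ up to $\Aut(\C^n)$. Fourth, I would run the reduction: an equivariant isomorphism $\Theta\colon(\C^n,\theta_{w_1})\to(\C^n,\theta_{w_2})$ is in particular an automorphism of $\C^n$ preserving these invariant data, hence induces an automorphism carrying $\psi_{w_1}(\C^l)$ onto $\psi_{w_2}(\C^l)$; by Definition \ref{def-eq-emb} this makes $\psi_{w_1}$ and $\psi_{w_2}$ equivalent, contradicting Theorem \ref{main1} when $w_1\neq w_2$.

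I expect the main obstacle to be the third step: showing that the $\C^*$-action $\theta_w$ retains \emph{exactly} the equivalence-class information of the embedding $\psi_w$, with nothing lost and nothing spuriously identified. One must argue that an abstract equivariant isomorphism of the actions — which a priori is only an automorphism intertwining the two $\C^*$-actions and need not respect any chosen coordinates — is nonetheless forced to carry the distinguished invariant subvariety of $\theta_{w_1}$ to that of $\theta_{w_2}$, and that the restriction lands in $\Aut(\C^n)$ in the sense of Definition \ref{def-eq-emb} rather than some weaker notion. This is exactly the point at which the \emph{image}-based strength of our equivalence (emphasized after Definition \ref{def-eq-emb}) is indispensable: the weaker $\Aut(\C^n)$-equivalence would not suffice, because an equivariant conjugation only controls images of invariant sets, not the parametrizations. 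A secondary, more technical obstacle is dimension bookkeeping: one must check that the construction of \cite{DK1} needs the extra coordinate that forces $n\ge 5$ (equivalently $l\ge 3$ via $k=n-l-1$), and that the linear part at the fixed point can be prescribed uniformly in $w$ so that all actions in the family share the same linearization yet remain mutually non-conjugate.
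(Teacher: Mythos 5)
Your high-level strategy is the same as the paper's: run the construction of \cite{DK1} fiberwise over the parameter, check it is holomorphic in $w$, and extract a conjugation invariant of $\theta_w$ that recovers the embedding. But the two ingredients you leave as ``recall the recipe'' and ``identify an invariant'' are exactly where the content lies, and the specific choices you do commit to are wrong. First, the bookkeeping. The construction of \cite{DK1} is a pseudo-affine modification: from a family $\varphi_w\colon\C^l\hookrightarrow\C^m$ one forms $M_w=\{(z,u,v): f_i(w,z)=u_iv\}$ and then uses that $M_w\times\C^l\cong\C^{m+l+1}$ (Lemmas \ref{straight} and \ref{fam}, which straighten the center inside the divisor by a parametrized automorphism built from a Cartan Theorem B extension); the action is the restriction of a linear action on the ambient $\C^{m+1+N+l}$. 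So the construction adds $l+1$ dimensions, and to land on $\C^n$ with parameter $\C^{n-4}$ one must take $l=1$: embeddings of $\C$ into $\C^{n-2}$ parametrized by $\C^{(n-2)-1-1}=\C^{n-4}$, whence $n-2\ge 3$, i.e.\ $n\ge 5$. Your choice $l=3$ with target $\C^{n-1}$ is internally inconsistent with your own formula $k=n-l-1$ and would produce an action on $\C^{n+3}$, not $\C^n$.

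Second, and more seriously, your proposed invariant --- the fixed-point set of $\theta_w$ with its induced embedding --- does not do the job as stated. The fixed-point set of $\theta_w$ is a copy of $\C$ sitting inside the \emph{large} space $\C^n=\C^{(n-2)+1+1}$, and it sits there via the non-canonical straightening biholomorphism of Lemma \ref{fam}; an equivariant isomorphism therefore only gives you an automorphism of $\C^n$ matching these stabilized embeddings, and you would still need a cancellation argument to conclude anything about the original embeddings into $\C^{n-2}$. The paper avoids this by passing to the categorical quotient $\C^n/\!/\C^*$ and its Luna stratification: with weights $(0,2,-2,\dots,-2,1)$ on $(z,v,u,x)$, the closure of the $\Z/2\Z$-isotropy stratum is $M_w/\!/\C^*\cong\C^{n-2}$ and the fixed-point stratum inside it is exactly $\varphi_w(\C)\subset\C^{n-2}$. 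An equivariant isomorphism induces an isomorphism of quotients preserving the stratification, hence an automorphism $\beta$ of $\C^{n-2}$ with $\beta(\varphi_{w_1}(\C))=\varphi_{w_2}(\C)$, contradicting Theorem \ref{main}. This quotient-stratification step is the precise mechanism your third step is missing, and without it (or a substitute such as the Eisenman-hyperbolicity cancellation of Section \ref{Eisenman}) the reduction to non-equivalence of embeddings does not close.
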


The linearization problem for holomorphic $\C^*$-actions on $\C^n$ is thus solved to the positive for $n=2$ by Suzuki \cite{Su} and still open for $n=3$. For $n=4$ there are uncountably many actions (non-linearizable ones among them)
\cite{DK} and for $n\ge 5$ our result implies that there are families. Moreover there are families including a linear action as a single member of the family as our last main result shows

\begin{theorem} \label{action2}
For any $n\ge 5$ there is a holomorphic family of $\C^*$-actions on $\C^n$ parametrized by $\C$
$$\C \times \C^* \times \C^n \to \C^n\quad (w, \theta, z) \mapsto \theta_w (z)$$ so that for
different parameters $w_1\neq w_2\in \C$ there is no equivariant isomorphism between the actions
$\theta_{w_1}$ and $\theta_{w_2}$. Moreover the action $\theta_0$ is linear.
\end{theorem}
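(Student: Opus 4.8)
The plan is to reuse the machinery behind Theorem \ref{action1}, feeding it a different and smaller family of embeddings. Recall that the proof of Theorem \ref{action1} proceeds by taking the construction of \cite{DK1}, which assigns to a proper holomorphic embedding a holomorphic $\C^*$-action on a larger affine space, and by observing that this assignment is holomorphic in parameters. Its decisive property is that an equivariant isomorphism between two of the resulting actions forces an equivalence, in the strong sense of Definition \ref{def-eq-emb}, of the two embeddings one started with; conversely, the standard embedding is sent to a linear action. Thus, to obtain a family of $\C^*$-actions on $\C^n$ parametrized by $\C$, one only has to supply a holomorphic family of pairwise non-equivalent embeddings, parametrized by $\C$, together with a distinguished member equal to the standard embedding.

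First I would fix the dimensions. In the construction of \cite{DK1} two coordinates of the target are consumed by the hyperbolic part of the action, so an action on $\C^n$ is produced from an embedding into $\C^{n-2}$. To make the parameter space one-dimensional I therefore apply Theorem \ref{main1} with source $\C^{n-4}$ and target $\C^{n-2}$: here the parameter dimension is $(n-2)-(n-4)-1=1$, so one obtains a holomorphic family $\varphi_w\colon\C^{n-4}\hookrightarrow\C^{n-2}$, $w\in\C$, with $\varphi_{w_1}$ and $\varphi_{w_2}$ non-equivalent for $w_1\neq w_2$. The constraint $n-4\geq 1$ is exactly the hypothesis $n\geq 5$. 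In contrast, Theorem \ref{action1} used the opposite extreme, embedding $\C^1$ into $\C^{n-2}$ so as to make the parameter space as large as possible.

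Next I would arrange the base point. I need $\varphi_0$ to be the standard embedding, so that the associated action $\theta_0$ is linear. The families produced in the proof of Theorem \ref{main} are obtained by a parameter-dependent modification of a fixed starting embedding which degenerates to the starting embedding at one value of the parameter; normalizing that value to $w=0$ and taking the starting embedding to be standard makes $\varphi_0$ standard without disturbing the pairwise non-equivalence of the remaining members. Applying the construction of \cite{DK1} to the family $\varphi_w$ then yields a holomorphic family of $\C^*$-actions $\theta_w$ on $\C^n$ parametrized by $\C$. Holomorphic dependence on $w$ is inherited from that of $\varphi_w$, exactly as in Theorem \ref{action1}; linearity of $\theta_0$ follows because $\varphi_0$ is standard; and if $\theta_{w_1}$ and $\theta_{w_2}$ were equivariantly isomorphic, the reduction recalled above would make $\varphi_{w_1}$ and $\varphi_{w_2}$ equivalent, contradicting Theorem \ref{main1}.

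The main obstacle I anticipate is the base-point normalization together with the direction of the reduction. One must check that the specific family delivered by Theorem \ref{main} can indeed be taken to contain the standard embedding at $w=0$ while every pair, including pairs involving $\varphi_0$, remains non-equivalent, since the non-equivalence assertion of Theorem \ref{main} is what the whole argument rests on. Equally important is that the equivalence forced by an equivariant isomorphism be an equivalence in the sense of Definition \ref{def-eq-emb}, and not merely the weaker $\Aut(\C^{n-2})$-equivalence; this is precisely the point emphasized after Theorem \ref{main}, and it is why the stronger notion of equivalence is indispensable for the group-action application. Once these two points are in place, the statement follows.
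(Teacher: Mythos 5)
Your overall strategy --- feed the parametrized affine-modification construction a one-parameter family of pairwise non-equivalent embeddings containing the standard one --- is indeed the paper's strategy, but two of your concrete steps fail.

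First, the dimension count is wrong. The construction of \cite{DK1}, as set up in Lemma \ref{fam}, turns a family of embeddings $\C^l\hookrightarrow\C^m$ into a family of $\C^*$-actions on $\C^{m+l+1}$ (the Rees space $Mod(\C^{m+1},\ldots)$ only becomes affine space after crossing with $\C^l$, so the action lives on $\C^{m+l+1}$, not on $\C^{m+2}$). Your choice of source $\C^{n-4}$ and target $\C^{n-2}$ therefore produces actions on $\C^{2n-5}$, which equals $\C^n$ only for $n=5$. The statement ``an action on $\C^n$ comes from an embedding into $\C^{n-2}$'' is valid only for $l=1$. The correct input is a family of embeddings $\C^l\hookrightarrow\C^{n-l-1}$; the paper takes $l=1$, $m=n-2$. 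The one-dimensionality of the parameter space is then \emph{not} obtained by tuning $k=m-l-1$ to equal $1$, but from Proposition \ref{standard}, which supplies a family parametrized by $\C$ for any $0<l<m-1$ (this is where $n\ge5$ enters: $1<n-3$).

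Second, and more seriously, your base-point normalization does not work as described. You assert that the family from Theorem \ref{main} ``degenerates to the starting embedding at one value of the parameter,'' so that choosing the standard starting embedding puts it into the family. This is false: the paper points out explicitly (before Proposition \ref{standard}) that \emph{no} member of the family from Theorem \ref{main} is equivalent to the standard embedding, because by Theorem \ref{mainadd} the complement of every member is Eisenman $n$-hyperbolic, whereas the complement of the standard embedding contains copies of $\C^n$. The parametrized automorphisms $\alpha_\mu$ do not reduce to the identity at any parameter value. The paper instead inserts the standard embedding by a genuine additional trick (Proposition \ref{standard}): after translating so that $\phi_1(w,0)=0$, it rescales, setting $\psi(w,x)=\frac1w\,\phi_1(w,wx)$ for $w\ne0$ with the linear limit at $w=0$. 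Each $\psi_w$, $w\ne0$, is equivalent to $\phi_{1,w}$ (so those members stay pairwise non-equivalent), and a separate argument --- again Eisenman hyperbolicity --- is needed to show that the new member $\psi_0$ is not equivalent to any $\psi_w$ with $w\ne0$. You correctly flagged the base point as the main obstacle, but the resolution you propose for it is not available.
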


The paper is organized as follows. In section  \ref{technik} we give all technical preparations for our
(quite complicated) construction. The proofs will be given in the appendix. Section \ref{hauptsatz} contains the proof of  Theorem \ref{main}.  The next section \ref{Eisenman} contains an addition to Theorem \ref{main} which allows to deduce Theorem \ref{main1}.
Section \ref{wirkung} contains the application to group actions in particular the proofs  of Theorems \ref{action1} and  \ref{action2}. Some concluding remarks are contained in section \ref{concluding}.

The results of the present paper have been partially announced in \cite{K1}. At that time the technical details had been extremely complicated and lengthy. Over the last years they have become much shorter and much more elegant so that the authors finally decided to publish the present complete version.

Part of the work was done during a stay of the first author at the Mittag-Leffler-Institute during the special program in Complex Analysis of Several Variables 2008. We would like to thank the Institute for hospitality and excellent working conditions. 

%%%%%%%%%%%%%%%%%%%%%%%%%%%%%%%%%%%%%%%%%%%%%%%%%%%%
\section{Technical preparations}\label{technik}
In this section we state the main lemmas needed to prove the main theorem. The proofs of theese lemmas can be found in section \ref{prooflemmas}.

For the benefit of the reader  we give
 a list of notations  mostly adhered to in this paper.

By an automorphism of $\C^n$ depending on a
parameter $w\in \C^k$ we mean an element of
$\mathrm{Aut}^{k}_{hol}(\C^{n}):=  \{ \psi \in \mathrm{Aut}_{hol}(\C^{k+n})\quad :
\quad \psi (w, z)= (w, \psi_1 (w,z))\} $, and approximations are understood
to be uniform on compacts.

We will throughout the paper call a holomorphic map $\eta : \C^k \to \C^n$
a {\em parametrized point} $\eta (w)$ in $\C^n$.

\begin{itemize}
\item $\B_i$ is the open unit ball of $\C^i$. The closed ball is denoted $\overline \B_i$.
\item $X$ a complex space of dimension $\dim X$. %Kolla m ingen bra bokstav.
\item $\varphi_0=\iota :X\hookrightarrow \C^n$, where $\iota$ is the inclusion map.
\item $\phi_0:\C^k \times X \to \C^k\times \C^n$ is given by $(w,x)\mapsto (w,\varphi_0(x))$ for $w\in \C^k$.
\item $\alpha_n\in \rm{Aut}^k_{hol}(\C^n)$.
\item $A_n=\alpha_n\circ \alpha_{n-1}\circ \ldots \circ \alpha_1$.
\item $\phi_n=A_n\circ \phi_0=\alpha_n\circ \alpha_{n-1}\circ \ldots \circ \alpha_1\circ \phi_0:\C^k \times X \to \C^k\times \C^n$.
\item $\pi_2$ the projection of $\C^k\times \C^n$ onto $\C^n$.
\item $\phi =\lim_{n\to \infty} A_n\circ \phi_0(w,x)$.
\item $\varphi_n=\pi_2(A_n\circ \phi_0)=\pi_2(\phi_n)$.
\item $\varphi =\pi_2(A\circ \phi_0)$ (or $\varphi_w$ if $w$ is a fixed parameter value).
\item $P_2:\C^k\times X\to X$ is given by $P_2(w,x)=x$.
\item $\xi_i(w)\in \C^n$ interpolation points (osculation points) which vary with respect to $w\in \C^k$.
\item $\eta_i$ the points of $X$  corresponding to the points  $\xi_i(w)$ (preimage points of $\xi_i(w)$).
\item $\mu$ induction variable.
For every $\mu$ we define $\epsilon_{\mu},R_{\mu}>0$ and finite subsets $\cup_{j=1}^{k(\mu )}\{a^{\mu}_j\}$ of $\partial (\mu +1)\B_n$ and $\cup_{j=1}^{k(\mu )}\{x^{\mu}_j\}$ of $X$ respectively.
\end{itemize}

\subsection{Growth restrictions for holomorphic maps}

In the construction of our families of non-equivalent embeddings we
will use techniques of growth restrictions on entire maps from
$\C^n$ to $\C^n$. 

These growth restrictions are  governed by the following
lemma, Lem\-ma 4.3 in \cite{RR1} which we present with a simple additional conclusion,
namely that one can avoid the nowhere dense set $Q$. The additional conclusion is obvious from the proof.

\begin{lemma}\label{lemma4.3copy}
Given real numbers $0<a_1<a_2$, $0<r_1<r_2$ and $c>0$, let
$\Gamma$ be the class of holomorphic mappings
$$f=(f_1,\ldots ,f_k):a_2\B_n\to r_2\B_k$$
such that
$$|f(0)|\leq \frac 12r_1$$
and
$$\| \frac{\partial (f_1,\ldots ,f_k)}{\partial (z_1,\ldots ,z_k)}\| \geq c$$
at some point of $a_1\bar \B_n$.

Let $Q\subset \partial (r_1\B_k )$ be a set such that $\partial
(r_1\B_k ) \setminus Q$ is dense in $\partial (r_1\B_k).$ Then there
is a finite set $E=E(a_1,a_2,r_1,r_2,c)\subset
\partial (r_1\B_k )\setminus Q$ with the property that, if $f\in \Gamma$ and
$f(a_1\B_n)$ intersects $\partial (r_1\B_k )$ then $f(a_2\B_n)$
intersects $E$.
\end{lemma}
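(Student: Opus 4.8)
The plan is to argue by contradiction using that $\Gamma$ is a normal family, producing the finite set $E$ out of a fixed dense sequence in the allowed part of the sphere. Since every $f\in\Gamma$ maps into the fixed ball $r_2\B_k$, the family is uniformly bounded, hence normal. Fix a sequence $(p_j)_{j\ge 1}$ that is dense in $\partial(r_1\B_k)\setminus Q$; such a sequence exists precisely because $\partial(r_1\B_k)\setminus Q$ is dense in $\partial(r_1\B_k)$, which is where the hypothesis on $Q$ enters. Suppose no finite initial segment $E_m=\{p_1,\dots,p_m\}$ has the asserted property. Then for each $m$ there is $f_m\in\Gamma$ whose small ball hits the sphere, $f_m(a_1\B_n)\cap\partial(r_1\B_k)\ne\emptyset$, while $f_m(a_2\B_n)$ avoids $p_1,\dots,p_m$. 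After passing to a subsequence I may assume $f_m\to f$ uniformly on compact subsets of $a_2\B_n$.

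I first record that the limit inherits all the relevant features. Uniform convergence of the maps and of their derivatives gives $|f(0)|\le\frac12 r_1$ and, after extracting a convergent subsequence of the points at which the Jacobian bound holds, a point $q_0\in a_1\overline{\B}_n$ with $\|\,\partial(f_1,\dots,f_k)/\partial(z_1,\dots,z_k)(q_0)\|\ge c$; combined with the Cauchy estimates coming from the a priori bound $r_2$, this lower bound forces the $k\times k$ differential to be invertible at $q_0$ with uniformly controlled inverse. Likewise, choosing sphere-hitting points $q'_m\in a_1\B_n$ and passing to a limit $q'_0\in a_1\overline{\B}_n\subset a_2\B_n$, I obtain $|f(q'_0)|=r_1$. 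The heart of the argument is the geometric claim that the image $f(a_2\B_n)$ contains a relatively open piece of the sphere $\partial(r_1\B_k)$.

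To prove the claim, set $U'=\{q\in a_2\B_n:\det\,\partial(f_1,\dots,f_k)/\partial(z_1,\dots,z_k)(q)\ne 0\}$. Since this determinant is holomorphic and nonzero at $q_0$, its zero set is a proper analytic subset, so $U'$ is a dense, connected, open set on which $f$ is a submersion; hence $f(U')$ is open and connected and dense in $f(a_2\B_n)$. Because $|f(0)|\le\frac12 r_1$, the set $f(U')$ meets $\{|w|<r_1\}$. On the other hand $f$ is nonconstant (it has full rank at $q_0$), so the plurisubharmonic function $|f|^2$ cannot attain an interior maximum; since $|f(q'_0)|=r_1$ at the interior point $q'_0$, the image $f(a_2\B_n)$ is not contained in $\overline{r_1\B_k}$, and by density $f(U')$ meets $\{|w|>r_1\}$ as well. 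An open connected set meeting both $\{|w|<r_1\}$ and $\{|w|>r_1\}$ contains an interior point $p^{\ast}$ with $|p^{\ast}|=r_1$ together with a full $\C^k$-neighbourhood of it, and this neighbourhood contains a relatively open piece of the sphere, proving the claim.

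Now pick $p_{j_0}$ from the dense sequence lying in this open piece; then $p_{j_0}=f(q^{\ast})$ for some $q^{\ast}\in U'$ at which the $k\times k$ differential is invertible. Restricting $f$ to the complex $k$-plane through $q^{\ast}$ parallel to the first $k$ coordinates, $f$ is a local biholomorphism, so the equation $f(\cdot)=p_{j_0}$ has a simple isolated solution; by the argument principle this solution persists under the uniform perturbation $f_m\to f$, whence $p_{j_0}\in f_m(a_2\B_n)$ for all large $m$. This contradicts $f_m(a_2\B_n)\cap\{p_1,\dots,p_m\}=\emptyset$ as soon as $m\ge j_0$. Hence some finite initial segment $E=E_m\subset\partial(r_1\B_k)\setminus Q$ works, which simultaneously delivers the additional conclusion that $E$ avoids $Q$. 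I expect the geometric claim to require the most care: one must exclude the possibility that $f(a_2\B_n)$ merely touches the sphere tangentially from inside, and it is exactly the maximum principle for $|f|^2$, made available by the Jacobian lower bound through nonconstancy, that upgrades such a contact to a genuine crossing and thus to an honest open piece of the sphere.
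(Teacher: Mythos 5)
Your argument is correct and is essentially the proof the paper relies on: the lemma is quoted from Rosay--Rudin \cite{RR1}, Lemma 4.3, whose proof is exactly this normal-families/compactness argument (extract a limit $f$, show $f(a_2\B_n)$ contains a relatively open piece of $\partial(r_1\B_k)$ via openness off the Jacobian zero set plus the maximum principle, then transfer a preimage back to $f_m$ by Rouch\'e), and your choice of the dense sequence inside $\partial (r_1\B_k)\setminus Q$ is precisely how the paper's ``additional conclusion'' about avoiding $Q$ falls out of that proof. The one point to read carefully is that $\frac{\partial (f_1,\ldots ,f_k)}{\partial (z_1,\ldots ,z_k)}$ denotes the Jacobian determinant, so invertibility at $q_0$ is immediate from $|\det|\geq c$ (rather than from Cauchy estimates, and the statement would in fact fail under a matrix-norm reading); with that reading every step of your argument goes through.
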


The following  technical detail is well known, we include for completeness, it is
Lemma 5.4. in \cite{F}. It  will be used frequently
in the proof of Theorem \ref{main}.

\begin{lemma}\label{FGRlemma2copy}
Let $K$ be a polynomially convex set in $\C^n$ and let $X$ be a closed
analytic  subvariety of $\C^n$. Moreover let $X_0$ be a compact holomorphically convex subset of $X$,
such that $K \cap X_0$ is contained in the (relative) interior of $X_0$.

Then the set  $K\cup X_0$ is polynomially convex.

\end{lemma}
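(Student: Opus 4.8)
The plan is to turn the statement into an assertion about polynomial hulls and to exploit both the polynomial convexity of $K$ and the analytic structure of $X$, the relative interior hypothesis being exactly what couples the two correctly.

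First I would check that $X_0$ is itself polynomially convex. For a compact set $E\subseteq\C^n$ write $\widehat E$ for its polynomial hull in $\C^n$. Since $X$ is a closed analytic subvariety of the Stein manifold $\C^n$, every $f\in\hol(X)$ extends to an $F\in\hol(\C^n)$, and $F$ is a locally uniform limit of polynomials; consequently the $\hol(X)$-convex hull of $X_0$ computed inside $X$ equals $\widehat{X_0}\cap X$. At the same time the holomorphic functions cutting out $X$ vanish on $X_0$ and are themselves limits of polynomials, which forces $\widehat{X_0}\subseteq X$. Together with the assumed $\hol(X)$-convexity of $X_0$ this yields $\widehat{X_0}=\widehat{X_0}\cap X=X_0$. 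Writing $L:=K\cup X_0$, the lemma is thus reduced to proving $\widehat L=L$, knowing that $K$ and $X_0$ are individually polynomially convex.

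The reduction I would use next is standard: since $\widehat L$ is always polynomially convex, the Oka--Weil theorem shows that $\widehat L=L$ as soon as every point $z_0\notin L$ admits a function $f$, holomorphic in a neighborhood of $\widehat L$, with $|f(z_0)|>\max_L|f|$. So I fix $z_0\in\widehat L\setminus L$ and try to build such an $f$. Here the hypothesis enters decisively: $K\cap X_0\subseteq\operatorname{relint}_X X_0$ is equivalent to $K\cap\partial_X X_0=\emptyset$, where $\partial_X X_0$ is the boundary of $X_0$ relative to $X$; since both sets are compact they lie at positive distance, so near the rim $\partial_X X_0$ only the variety $X$ is present. This lets me assemble $f$ from two ingredients: the defining functions of $X$, which vanish on $X_0$ and dominate the directions transverse to $X$, and functions from $\hol(X)$ separating $z_0$ from $X_0$ when $z_0$ lies near $X$; the piece $K$ is then controlled by its own polynomial convexity away from the rim.

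The main obstacle is precisely the coupling of $K$ and $X_0$ along $\partial_X X_0$. A priori the hull $\widehat L$ could contain extra analytic structure bridging $K$ and the variety $X$ across this rim, and excluding this is the entire content of the relative interior hypothesis. Concretely, the delicate point is to keep the extended $\hol(X)$-separating function small on all of $K$ --- not merely on $K\cap X_0$, where smallness is automatic --- while keeping $f$ holomorphic on a full neighborhood of the a priori unknown set $\widehat L$. I would handle this by invoking Rossi's local maximum modulus principle to reduce global control over $\widehat L\setminus L$ to control on the rim $\partial_X X_0$, exactly where the hypothesis guarantees that $K$ is absent; the polynomial convexity of $K$ and of $X_0$ then closes the argument at the final step.
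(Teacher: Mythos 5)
A preliminary remark: the paper does not prove this lemma at all --- it is quoted as ``well known'' and attributed to Lemma 5.4 of [F] --- so your attempt can only be measured against that standard argument. Your first two steps are correct and are indeed how that argument begins: $X_0$ is polynomially convex because Cartan extension plus Oka--Weil identifies the $\hol(X)$-convex hull of $X_0$ with $\widehat{X_0}\cap X$ while the ideal of $X$ forces $\widehat{X_0}\subset X$; and polynomial convexity of $L=K\cup X_0$ reduces to separating each point of $\widehat L\setminus L$ by a function holomorphic near $\widehat L$.

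The gap is in the third step, which is where the whole content of the lemma lies. You never establish the containment $\widehat L\subset K\cup X$ (this is the actual role of the defining functions of $X$: for $p\notin K\cup X$ take $h$ in the ideal of $X$ with $h(p)=1$ and a polynomial $P$ with $P(p)=1$, $|P|\le 1/2$ on $K$, and separate with $P^Nh$), and without it Rossi's principle cannot be localized to $X$. More seriously, Rossi does not reduce matters to the rim $\partial_X X_0$ as you claim: writing $\Omega=\widehat L\setminus L$, the local maximum modulus principle gives $\sup_{\overline\Omega}|f|=\sup_{\overline\Omega\cap L}|f|$, and since $\overline\Omega\subset X$ one lands on $\overline\Omega\cap L\subset (K\cap X)\cup X_0$; the set $K\cap X$ does not drop out merely because $K$ misses $\partial_X X_0$. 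One is then facing the hull of a union of two $\hol(X)$-convex compacts, which in general is \emph{not} convex. Indeed, the lemma is false as literally printed: take Kallin's three pairwise disjoint closed polydisks $P_1,P_2,P_3$ inside a linear subspace $X\cong\C^3$ of $\C^n$, put $K=P_1\cup P_2$ (polynomially convex, being a union of two disjoint compact convex sets) and $X_0=P_3$; then $K\cap X_0=\emptyset\subset\operatorname{relint}_X X_0$, yet $K\cup X_0$ is not polynomially convex. The hypothesis that makes the statement true, that the paper's application actually supplies (there $\phi_w^{-1}(\overline\B)\subset X_R$ gives $K\cap X\subset X_0$), and that [F, Lemma 5.4] assumes, is $K\cap X\subset X_0$ rather than $K\cap X_0\subset\operatorname{relint}_X X_0$. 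Under that hypothesis your Rossi strategy does close --- then $\overline\Omega\cap L\subset X_0$ and $\widehat{X_0}=X_0$ force $\Omega=\emptyset$ --- but as written your plan neither supplies the missing step $\widehat L\subset K\cup X$ nor can it succeed against the stated hypothesis.
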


We  need to construct  parametrized points with a certain property.

\begin{proposition}\label{pointprop}
Given natural numbers $k>0$ and $n\geq 2$ there is a number $m\in
\N$ such there are $m$, pairwise different,  parametrized points $\xi_1,\xi_2,
\ldots ,\xi_m:\C^k \to \C^n$ parametrized by a parameter $w\in
\C^k$ with the following property:

For $w_1\neq w_2$ there is no affine automorphism $\alpha \in
\mathrm{Aff}(\C^n)$ which maps the set of points $\{
\xi_1(w_1),\xi_2(w_1),\ldots ,\xi_m(w_1)\}$ onto the set of points
$\{ \xi_1(w_2),\xi_2(w_2),\ldots ,\xi_m(w_2)\}$.
\end{proposition}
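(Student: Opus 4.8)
The plan is to reduce the affine classification of point configurations to an elementary one--dimensional invariant theory by forcing all the points to be collinear. Fix the line $L=\{(t,0,\dots ,0):t\in\C\}\subset\C^n$ (here $n\ge 2$, though $n\ge 1$ would do) and set $\xi_i(w)=(t_i(w),0,\dots ,0)$ for holomorphic functions $t_i\colon\C^k\to\C$ still to be chosen. The point of this reduction is that if some $\alpha\in\mathrm{Aff}(\C^n)$ carries $\{\xi_i(w_1)\}$ onto $\{\xi_i(w_2)\}$, then, since at least two of the points are distinct, each configuration spans $L$; as $\alpha$ maps lines to lines we get $\alpha(L)=L$, and the restriction $\alpha|_L$ is an affine automorphism $t\mapsto at+b$ of $\C$ with $a\ne 0$. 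Conversely, every such map of $L$ extends to $\mathrm{Aff}(\C^n)$. Thus the proposition becomes purely one--dimensional: it suffices to find distinct holomorphic $t_1,\dots ,t_m\colon\C^k\to\C$ such that for $w_1\ne w_2$ the finite sets $\{t_i(w_1)\}$ and $\{t_i(w_2)\}$ lie in different orbits of $\mathrm{Aff}(\C)=\C^{*}\ltimes\C$ acting on unordered $m$-point subsets of $\C$.

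The decisive difficulty, and the reason the naive ``ordered'' invariants do not suffice, is that an affine equivalence is only a \emph{set} map and may relabel the points by an arbitrary permutation $\sigma\in S_m$. I would dispose of this at once by working only with invariants symmetric in the points. Writing $\bar t(w)=\tfrac1m\sum_{j}t_j(w)$, set the centered power sums
\[
 q_r(w)=\sum_{i=1}^{m}\bigl(t_i(w)-\bar t(w)\bigr)^{r},\qquad r\ge 2 .
\]
Each $q_r$ is symmetric in the $t_i$, invariant under the translation $t\mapsto t+b$, and scales homogeneously, $q_r\mapsto a^{r}q_r$, under $t\mapsto at$. Hence any scaling--balanced monomial ratio, such as
\[
 J(w)=\Bigl(\tfrac{q_4(w)}{q_2(w)^{2}},\ \tfrac{q_3(w)^{2}}{q_2(w)^{3}},\ \tfrac{q_5(w)^{2}}{q_2(w)^{5}},\dots\Bigr),
\]
is invariant under all of $\mathrm{Aff}(\C)$ and, being symmetric, automatically invariant under $S_m$. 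Consequently $J(w)$ depends only on the $\mathrm{Aff}(\C)\times S_m$--class of $\{t_i(w)\}$, so whenever $J(w_1)\ne J(w_2)$ the two configurations are non-equivalent, and with them the sets $\{\xi_i(w_1)\}$ and $\{\xi_i(w_2)\}$.

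It then remains to choose the family so that $w\mapsto J(w)$ is injective. I would first arrange, by an explicit choice built from nowhere--vanishing holomorphic units on $\C^k$ (which exist since $k\ge 1$), that all pairwise differences $t_i-t_j$ and the quantity $q_2$ are zero--free on $\C^k$; this keeps the $\xi_i$ pairwise distinct as parametrized points and makes $J$ globally defined. The centered power sums $q_2,\dots ,q_m$ generate the field of translation--invariant symmetric functions, and their scaling--invariant monomial combinations generate the full field of $\mathrm{Aff}(\C)\times S_m$--invariants, of transcendence degree $m-2$; hence for $m=k+2$ these invariants separate generic orbits and there is enough room to embed the $k$--dimensional parameter space. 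Injectivity of $w\mapsto J(w)$ is therefore an open, dense condition on the family, which I would secure either by exhibiting one family whose differential is injective at a point and invoking the identity theorem, or by a direct generic choice of the $t_i(w)$. This produces the required $m$ parametrized points.

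The main obstacle, as indicated, is precisely the permutation freedom of an affine equivalence: it is what rules out reading $w$ off from the coordinates of ``labeled'' points and forces the use of fully symmetric invariants. The collinearity reduction is what tames this, since on a line the relevant group is the solvable $\C^{*}\ltimes\C$, whose invariant field is generated by the elementary symmetric ratios above, so that the entire proposition collapses to the injectivity of an explicit tuple of rational symmetric functions of the $t_i(w)$.
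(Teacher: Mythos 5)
Your reduction to collinear configurations and the use of symmetric, $\mathrm{Aff}(\C)$-invariant rational functions is a legitimate strategy, and it correctly isolates the real difficulty (the permutation freedom of a set map). But the proof has a genuine gap at its decisive step: you never actually produce functions $t_1,\dots ,t_m$ for which $w\mapsto J(w)$ is injective on \emph{all} of $\C^k$, and the justifications you offer do not deliver this. Global injectivity of a holomorphic map on $\C^k$ is not an ``open, dense condition,'' and injectivity of the differential at one point plus the identity theorem only gives local injectivity near that point (compare $w\mapsto e^{w}$ on $\C$, whose differential is everywhere nonzero). The identity theorem could at best show that the coincidence set $\{(w_1,w_2): J(w_1)=J(w_2)\}$ is a proper analytic subset of $\C^k\times\C^k$, but the proposition needs it to be exactly the diagonal; a nonempty off-diagonal component is fatal. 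Even the dimension count is not reassuring: already for $k=1$, $m=3$ the normalized configuration $\{0,1,\lambda(w)\}$ must avoid, for all $w_1\ne w_2$, the six anharmonic coincidences $\lambda(w_2)\in\{\lambda(w_1),\,1-\lambda(w_1),\,1/\lambda(w_1),\dots\}$, and no injective entire $\lambda$ (necessarily affine) does this; one is forced to add more points or to use a genuinely non-surjective injective map, which is precisely what is missing.

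For comparison, the paper splits the construction into two blocks: (a) a large block of \emph{constant} points in such general position that any affine automorphism mapping $n+2$ of them back into the set must be the identity (Lemma \ref{pointlemma}), and (b) a block of $N$ parametrized points whose images lie in $N$ pairwise disjoint Fatou--Bieberbach domains, so that the induced map of $\C^k$ into the symmetric configuration space $\bigl((\C^n)^N\setminus\bigcup\delta_{ij}\bigr)/S_N$ is injective by construction (Lemma \ref{nnklemma}). Any affine equivalence must send enough constant points into the constant block, hence is the identity, hence would have to match the parametrized blocks as unordered sets --- a contradiction. The Fatou--Bieberbach trick is exactly the device that manufactures the global injectivity into a symmetric quotient which your invariant $J$ would need; if you import that idea (or the constant-block rigidification) into your collinear setting, your argument can be completed, but as written the construction of an injective $J$ is asserted rather than proved.
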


It is not difficult to see that the  holomorphic (even the algebraic)
automorphism group of $\C^n$ ($n\geq 2$) acts transitively on
finite subsets of $\C^n$ with fixed cardinality, i.e. one can move
$k$ distinct points $z_1,z_2,\ldots ,z_k\in \C^n$ by an
automorphism into some standard position, for example to the
points $(1,0,\ldots , 0)$,$(2,0,\ldots , 0)$,$\ldots$,$(k,0,\ldots
, 0)$, (usually if the dimension is clear we will write $(k,0)$).
For this apply a generic linear change of coordinates so that
afterwards all coordinates of the points $ z_1,z_2,\ldots ,z_k$
become different and then apply appropriate shears. The question
whether the holomorphic automorphism group of $\C^n$ acts
transitively on countable discrete subsets of $\C^n$ was answered
to the negative by Rosay and Rudin in \cite{RR1}. They called the
countable discrete subsets in the $\mathrm{Aut}_{hol}(\C^n
)$-orbit the "standard" countable discrete subset $e_1\N =\{
(1,0),(2,0),\ldots ,(k,0),\ldots \}$ tame sets and proved the
existence of non tame sets.

For our construction of families of embeddings we have to move
finitely many points holomorphically depending on a parameter to
some prescribed position by an automorphism which also depends
holomorphically on the parameter. 

%Strictly speaking our problem is
%the following:
%
%We are given $N$ holomorphic maps $\zeta_1,\zeta_2,\ldots
%,\zeta_N:\C^k\to \C^n$ such that for each fixed parameter $w\in
%\C^k$ the $N$ points $\zeta_1(w),\zeta_2(w),\ldots ,\zeta_N(w)$ in
%$\C^n$ are different. We need to find an automorphism $\psi \in
%\mathrm{Aut}^k_{hol}(\C^n)$ with
%$$\psi (w,\zeta_i(w))=(w,(i,0)) \quad \textrm{for all } i=1,2,\ldots ,N \quad \textrm{and for all } w\in \C^k .$$

\begin{definition}
Let $N$ be a natural number and $\zeta_1,\zeta_2,\ldots
,\zeta_N:\C^k\to \C^n$ be holomorphic maps such that for each
fixed parameter $w\in \C^k$ the $N$ points
$\zeta_1(w),\zeta_2(w),\ldots ,\zeta_N(w)$ in $\C^n$ are
different. We call them \textbf{simultaneously  standardizable},
if there exists an automorphism $\psi  \in
\mathrm{Aut}^k_{hol}(\C^n)$ holomorphically depending on the
parameter $w$ with
$$\psi (w,\zeta_i(w))=(w,(i,0)) \quad \textrm{for all } i=1,2,\ldots ,N \quad \textrm{and for all } w\in \C^k .$$
\end{definition}

 At the moment we are not able to decide under which condition on the parameter space any collection of parametrized points is
 simultaneously standardizable. We have the following partial result which is 
sufficient for our purposes.

\begin{proposition}\label{standp}
Let $\zeta_1,\zeta_2,\ldots ,\zeta_N:\C^k\to \C^n$, $n\geq 2$, be
holomorphic maps such that the points
$\zeta_1(w),\zeta_2(w),\ldots ,\zeta_N(w)$ in $\C^n$ are different
for each $w\in \C^k$ and suppose $k<n-1$. Then the parametrized
points $\zeta_1,\zeta_2,\ldots ,\zeta_N$ are simultaneously
standardizable.
\end{proposition}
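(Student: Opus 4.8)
The plan is to build the standardizing automorphism $\psi$ as a composition of four explicit pieces, where the only delicate step is the very first, and where the hypothesis $k<n-1$ enters in exactly one place.

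\emph{Step 1 (linear separation onto a hyperplane).} For $i\neq j$ put $\delta_{ij}(w)=\zeta_i(w)-\zeta_j(w)$; this is a nowhere-vanishing holomorphic map $\C^k\to\C^n\setminus\{0\}$, so it induces a holomorphic map $w\mapsto[\delta_{ij}(w)]\in\P^{n-1}$. Its image, and hence the finite union $V:=\bigcup_{i<j}\{[\delta_{ij}(w)]:w\in\C^k\}\subset\P^{n-1}$ over all pairs, has dimension at most $k$. Since $k<n-1=\dim\P^{n-1}$, the set $V$ is a proper subset of $\P^{n-1}$, so I can choose a line $\ell\in\P^{n-1}\setminus V$ and a linear automorphism $A\in\Gl(n,\C)$ with $A^{-1}e_n\in\ell$. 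Then $[A\,\delta_{ij}(w)]=[e_n]$ would force $[\delta_{ij}(w)]=[A^{-1}e_n]=\ell\in V$, a contradiction; hence $A\,\delta_{ij}(w)\notin\C e_n$ for all $w$ and all $i\neq j$. Writing $\zeta_i':=$ (first $n-1$ coordinates of $A\circ\zeta_i$), this says precisely that the points $\zeta_1'(w),\dots,\zeta_N'(w)\in\C^{n-1}$ are pairwise distinct for \emph{every} $w$. This is the heart of the matter and the step that genuinely fails when $k=n-1$. From now on I replace each $\zeta_i$ by $A\circ\zeta_i$, and write its coordinates as $\zeta_i^1,\dots,\zeta_i^n$.

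\emph{Step 2 (a $z_n$-shear via Cartan's theorem).} The graphs $\Gamma_i':=\{(w,\zeta_i'(w)):w\in\C^k\}$ are now pairwise disjoint closed submanifolds, so $Z:=\bigsqcup_i\Gamma_i'$ is a closed analytic subvariety of $\C^{k+n-1}$ whose connected components are the $\Gamma_i'$. Fix distinct constants $c_1,\dots,c_N\in\C$ and let $F\in\hol(Z)$ be the function equal to $c_i-\zeta_i^n(w)$ on $\Gamma_i'$; this is well defined and holomorphic since the $\Gamma_i'$ are the components of $Z$. By Cartan's extension theorem $F$ extends to some $h\in\hol(\C^{k+n-1})$, and the shear $\sigma(w,z)=(w,z',\,z_n+h(w,z'))$ (with $z'=(z_1,\dots,z_{n-1})$) lies in $\mathrm{Aut}^k_{hol}(\C^n)$ and carries $\zeta_i(w)$ to $(\zeta_i'(w),c_i)$, since it fixes the first $n-1$ coordinates and sends the last one to $\zeta_i^n(w)+h(w,\zeta_i'(w))=c_i$.

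\emph{Steps 3 and 4 (clean-up by Lagrange interpolation and a constant automorphism).} Now the last coordinate takes the distinct constant value $c_i$ on the $i$-th point, so ordinary one-variable Lagrange interpolation gives, for $j=1,\dots,n-1$, polynomials $g_j(w,t)=\sum_i\bigl(-\zeta_i^j(w)\bigr)\prod_{l\neq i}\frac{t-c_l}{c_i-c_l}$ with holomorphic coefficients and $g_j(w,c_i)=-\zeta_i^j(w)$. The single shear $\tau(w,z)=(w,\,z_1+g_1(w,z_n),\dots,z_{n-1}+g_{n-1}(w,z_n),\,z_n)$ then sends the $i$-th point to $(0,\dots,0,c_i)$. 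Finally, since $n\ge2$ the (parameter-free) automorphism group acts transitively on finite configurations, so there is $\beta\in\Aut(\C^n)$ moving the fixed set $\{(0,\dots,0,c_i)\}$ onto $\{(i,0):i=1,\dots,N\}$; regarding $(w,z)\mapsto(w,\beta(z))$ as an element of $\mathrm{Aut}^k_{hol}(\C^n)$ and composing all four maps produces the desired $\psi$.

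The hard part is Step 1: everything after it is soft, relying only on Cartan's Theorem B and elementary interpolation, while Step 1 is the unique place the codimension hypothesis is used and is exactly where it is sharp. The point is that separating the points by a \emph{single dropped coordinate} (an $(n-1)$-dimensional projection) only requires avoiding one point of $\P^{n-1}$, which is possible precisely because the variety $V$ of difference-directions has dimension $\le k<n-1$. This also matches the earlier remark that the borderline case $k=0,\ \dim X=n-1$ behaves differently, since there $V$ may fill all of $\P^{n-1}$ and no linear separation exists.
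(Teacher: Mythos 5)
Your proof is correct, but it takes a genuinely different route from the paper's. The paper proceeds by induction on the number $N$ of parametrized points: having standardized the first $N-1$, it arranges (by a small perturbation) that the graph of the last moved point is transversal to, hence disjoint from, the parametrized $z_1$-axis --- this is where $k<n-1$ enters, via the dimension count $k+(k+1)<n+k$ --- and then invokes Lemma \ref{te1.1}, whose proof rests on the parametrized Anders\'en--Lempert theorem and on the Oka--Grauert principle (to solve a B\'ezout-type equation $\sum h_i\hat q_i=1$ holomorphically in $w$). You instead spend the hypothesis $k<n-1$ once, at the very start, to find a single generic linear projection $\C^n\to\C^{n-1}$ under which all $N$ parametrized points stay pairwise distinct for every $w$ (the difference-directions sweep out a subset of $\P^{n-1}$ of Hausdorff dimension at most $2k<2(n-1)$, so a direction can be avoided); after that, everything is done by explicit shears, with Cartan's Theorem B supplying the one holomorphic extension you need, and a final parameter-independent automorphism. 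Your argument is more elementary and more transparent about where the codimension hypothesis is sharp; the paper's detour through Lemma \ref{te1.1} is heavier but that lemma comes with approximation control (the automorphism can be taken close to the identity on a prescribed compact set) and is reused in the proof of the interpolation lemma, which is why the authors build it rather than the direct construction you give. One small point of rigor: the image of a holomorphic map $\C^k\to\P^{n-1}$ need not be an analytic subvariety, so you should justify the properness of $V$ by noting it is a countable union of compact sets of Hausdorff dimension at most $2k<2(n-1)$, hence of measure zero; the conclusion you draw is correct.
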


\subsection{Interpolation lemma}
The following lemma is a key ingredient in the proof of our main
theorem. It can be used to prove interpolation results for parametrized embeddings as this is well-known in the non parametrized version. For the non-parametrized case we refer to the papers \cite{G}, \cite{K},
\cite{FIKP}, \cite{F}. Another approach to interpolation is used in \cite{Pr1}. We do not prove parametrized interpolation theorems  for countable sets in this paper, we just use the lemma as a tool in the proof of our main Theorem \ref{main}.

Therefore before we formulate the lemma we need to introduce some
notation:

% from the proof of Theorem \ref{main}:

Let  $\phi : \C^k\times X\hookrightarrow \C^k \times  \C^n$ be a
(parametrized) embedding of a complex space $X$ into $\C^n$, i.e., an embedding of the form 
$\phi (w, x) = (w, \tilde \phi (w, x))$.

$X_R$ is a holomorphically convex compact subset of $X$.

$\overline \Delta$ is a ball (of  any radius) in $\C^k$.
$\overline \B$ is a ball (of any radius) in $\C^n$.

We assume that $\phi^{-1}_w(\overline \B )\subset X_R$ for all
$w\in \overline \Delta$, which implies that $K=(\overline \Delta
\times \overline \B )\cup \phi (\overline \Delta \times \overline
X_R)$ is a polynomially convex subset of $\C^k\times \C^n$ (Lemma \ref{FGRlemma2copy} above).

Furthermore we assume the dimension condition
\begin{equation}\label{dim}
 \dim X +k <n .
\end{equation}
\begin{lemma}\label{FLYTTLEMMAT}
Let $b_1(w),b_2(w),\ldots , b_N(w)$ be $N$ parametrized points
contained in $K=(  \overline \Delta \times  \overline \B )\cup
\phi ( \overline \Delta \times \overline X_{R})$. Assume that
$(w,p(w))$ and $(w,q(w))$ are parametrized points in  $\C^k \times
\C^n\setminus K $ and  let $s$ be some positive integer. Then for
each $\epsilon >0$ there exists an automorphism  $\alpha \in
\rm{Aut}^k_{hol}(\C^n)$ such that $|\alpha (w,z)-(w,z)|\leq
\epsilon$ for every $(w,z)\in K$, $\alpha (w,b_i(w) )=(w,b_i(w))$
of order $s$ for every $w\in \C^k$ and $\alpha (w,p(w))=(w,q(w))$
for every $w\in \C^k$.
\end{lemma}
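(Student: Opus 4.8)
The plan is to produce $\alpha$ as a global fibered automorphism that approximates the time-one map of a carefully built isotopy, using the parametrized Andersen--Lempert theory applied to the polynomially convex set $K\subset\C^k\times\C^n$, and then to upgrade the mere approximation to the \emph{exact} interpolation demanded along the graphs of $b_i$, of $p$ and of $q$. Throughout I work in the fibered category, i.e.\ with $\mathrm{Aut}^k_{hol}(\C^n)$ and with vector fields on $\C^k\times\C^n$ having no $\partial/\partial w$--component, so that the base parameter $w$ is preserved automatically. Write $\Gamma_g=\{(w,g(w)):w\in\C^k\}$ for the graph of a parametrized point $g$; the data to be interpolated are: the fibre $s$-jet of $\alpha$ along $\Gamma_{b_i}$ should be that of the identity, $\alpha$ should carry $\Gamma_p$ onto $\Gamma_q$, and $\alpha$ should be $\epsilon$-close to the identity on $K$.

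First I would connect the two parametrized points by an isotopy of holomorphic sections that avoids $K$. Concretely I seek a smooth family $r_t\colon\C^k\to\C^n$, $t\in[0,1]$, each $r_t$ holomorphic, with $r_0=p$, $r_1=q$, and with $\{(w,r_t(w)):w\in\overline\Delta\}\cap K=\emptyset$ for every $t$. This is exactly where the dimension hypothesis \eqref{dim} enters: the image part $\phi(\overline\Delta\times\overline X_R)$ of $K$ has complex dimension $\dim X+k$, while the trace $\{(t,w)\mapsto(w,r_t(w))\}$ of a one-parameter family of sections sweeps out a set of real dimension $1+2k$; since $1+2k+2(\dim X+k)<2(k+n)$ is equivalent to $\dim X+k<n$, a general-position perturbation of, say, the linear homotopy $(1-t)p+tq$ (perturbed within holomorphic sections) avoids this thin part of $K$, while the ball part $\overline\Delta\times\overline\B$ is avoided by routing the sections through large modulus over $\overline\Delta$, using that each slice complement $\C^n\setminus K_w$ is connected.

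Next I would convert this into an isotopy of maps and globalize it. Choose a smooth family of fibered holomorphic vector fields $V_t$, defined on a neighborhood of $K\cup\Gamma_{r_{[0,1]}}$, vanishing near $K$ (hence near every $b_i\in K$) and satisfying $V_t(w,r_t(w))=\tfrac{d}{dt}(w,r_t(w))$ along the moving graph; the existence of such fields interpolating prescribed data along the closed fibered submanifold $\Gamma_{r_t}$ in the complement of $K$ is a Cartan/Oka-type statement. Its local flow $\Phi_t$ is then a fibered isotopy of injective holomorphic maps with $\Phi_0=\mathrm{id}$, $\Phi_t=\mathrm{id}$ near $K$, and $\Phi_t(w,p(w))=(w,r_t(w))$; along the isotopy the relevant compacta remain polynomially convex by Lemma~\ref{FGRlemma2copy}, which is what the approximation theorem requires. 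Applying the parametrized Andersen--Lempert theorem yields $\alpha\in\mathrm{Aut}^k_{hol}(\C^n)$ with $|\alpha(w,z)-(w,z)|\le\epsilon$ on $K$, with $\alpha\approx\mathrm{id}$ to order $s$ along $\Gamma_{b_i}$, and with $\alpha(p)\approx q$. To make the last two conditions exact I would invoke the interpolation version of the theorem: along the finitely many pairwise disjoint copies $\Gamma_{b_i}$ of $\C^k$ one prescribes the fibre $s$-jet of the identity, and along $\Gamma_p$ one prescribes the value $q$; since these loci are mutually disjoint and disjoint from the portion of $K$ where only approximation is needed, the prescribed jet/value data is admissible and can be interpolated over all of $\C^k$ while keeping the uniform $\epsilon$-estimate on $K$.

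The main obstacle is the combination carried out in the last step: realizing \emph{exact} interpolation along the noncompact graphs $\Gamma_{b_i},\Gamma_p$ over the \emph{entire} base $\C^k$ simultaneously with mere approximation on the compact $K$ over $\overline\Delta$, all inside $\mathrm{Aut}^k_{hol}(\C^n)$. This is genuinely a parametrized (Oka-theoretic) strengthening of Andersen--Lempert rather than a pointwise one, and it is the part requiring the most care; the geometric crux feeding into it is the construction in the second paragraph of a holomorphic-in-$w$ isotopy of sections avoiding $K$, which is precisely where the dimension restriction $\dim X+k<n$ is indispensable.
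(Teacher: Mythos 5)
Your outline goes wrong at the decisive last step. Everything up to and including the approximation on $K$ is achievable (and is in the same spirit as the paper, which also routes $p$ and $q$ along paths $\gamma_p,\gamma_q$ avoiding $\phi(\overline\Delta\times\overline X_R)$ by transversality and the dimension condition \eqref{dim}, and then applies the parametrized Anders\'en--Lempert theorem). But the sentence ``to make the last two conditions exact I would invoke the interpolation version of the theorem'' appeals to a result that does not exist in the paper and is not standard: a parametrized Anders\'en--Lempert theorem with \emph{exact} jet/value interpolation along the noncompact graphs $\Gamma_{b_i},\Gamma_p\subset\C^k\times\C^n$ of \emph{arbitrary} holomorphic sections, uniformly over all of $\C^k$, inside $\mathrm{Aut}^k_{hol}(\C^n)$. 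The version actually proved in the appendix (Theorem \ref{alwpfc}) only allows fixing the \emph{constant} sections $(w,j,0,\dots,0)$ (or the first coordinate line), because its proof rests on the Anders\'en--Lempert observation that polynomial fields vanishing at the standard points decompose into complete fields vanishing there; no such decomposition is available for fields required to vanish to order $s$ along a general graph $w\mapsto(w,b_i(w))$. You correctly flag this as ``the part requiring the most care,'' but flagging it does not close it: as written, the proof has a genuine gap precisely where the exactness of $\alpha(w,b_i(w))=(w,b_i(w))$ to order $s$ and of $\alpha(w,p(w))=(w,q(w))$ must be produced.

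The paper closes this gap by a reduction you do not perform: it first makes all the relevant parametrized points \emph{constant}. Using Proposition \ref{standp} (simultaneous standardizability, which is where $\dim X+k<n$, i.e.\ $k<n-1$, is really spent) it conjugates the $b_i(w)$ to the standard points $(i,0,\dots,0)$; then Lemma \ref{te1.1} --- whose proof uses Theorem \ref{alwpfc} together with the Oka principle (Theorem \ref{oka}) to solve a division problem $\sum_i h_i(w)\hat q_i(w)=1$ globally in $w$ and explicit shears --- moves $p(w)$ and $q(w)$ \emph{exactly} to the constant points $(2R,1,0,\dots,0)$ and $(2R,2,0,\dots,0)$ while fixing the standard points and staying $\epsilon$-close to the identity on the relevant compact. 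Once everything is constant, a single explicit polynomial shear $\alpha_4(w,z)=(w,z_1,z_2+\tilde P(z_1),z_3,\dots,z_n)$, with $\tilde P(2R)=1$, $\tilde P$ small on $(R+1)\overline\B_1$ and vanishing to order $s$ at $z_1=1,\dots,N$, achieves the exact value condition and the exact $s$-jet condition simultaneously; conjugating back, $\alpha=\alpha_1^{-1}\circ\alpha_2^{-1}\circ\alpha_3^{-1}\circ\alpha_4\circ\alpha_3\circ\alpha_2\circ\alpha_1$, transfers these exact conditions to the original $b_i(w),p(w),q(w)$ because jets of the identity are preserved under conjugation by maps fixing the point. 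If you want to salvage your route, you must either prove the strengthened interpolation theorem you invoke, or adopt this standardize--shear--conjugate mechanism, which is the actual content of the paper's argument.
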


\subsection{Osculation lemma}

The conclusion of the growth restrictions will be that only affine automorphisms could map the images of different embeddings from the family  onto each other. To exclude these affine automorphisms we will have a finite number of points "marked" in each embedding of our family. 
The marking is in such a way that by affine automorphisms the  sets of marked points  have to be mapped onto each other. The marking will be achieved by letting the embeddings osculate of higher order exactly at these points.

\begin{definition}
Let $l\geq 2$ be a natural number. We say that a submanifold $M$
of $\C^n$ osculates of order $l$ at $x\in M$ if $M$ has contact
order $l$ with the tangent space $T_xM\subset \C^n$ in $x \in M$.
\end{definition}

In local coordinates osculating can be interpreted as follows:

Let $\zeta :U(\subset \C^m)\to M$ be a holomorphic coordinate
system for the $m$ dimensional manifold $M$ at $x$, $\zeta (0)=x$.
Then $M$ osculates of order $l$ at $x$ if and only if
$\frac{\partial}{\partial w^{\alpha}}|_{w=0}\zeta \in T_xM$ for
every multiindex $\alpha =(\alpha_1,\alpha_2,\ldots ,\alpha_m)$
with $2\leq |\alpha |\leq l$.

The property to osculate is preserved by affine coordinate changes
on $\C^n$, i.e. if $\psi :\C^n \to \C^n$ is an affine automorphism
of $\C^n$ then the submanifold $M\subset \C^n$ osculates of order
$l$ at $x\in M$ if and only if the submanifold $\psi (M)$
osculates of order $l$ at $\psi (x)\in \psi (M)$.

\begin{notation}\label{oscul}
(1) The property not to osculate of order $l$ at any point is generic for $l\ge 2$ if not $\dim M =1$
and $n=2$. In the later case it is generic for $l\ge3$.

(2) In the proof of Theorem \ref{main} the role of $M$ will be played by the smooth part $X \setminus Sing (X)$ of the space we want to embed.
\end{notation}

By a manifold $M_{\C^k}$ in $\C^n$ parametrized by $w\in \C^k$ we mean the image (in $\C^k \times \C^n$) of a proper holomorphic
embedding $\Phi : \C^k \times M \to \C^k\times \C^n$ which is of form $(w,m) \mapsto (w, \phi (w, m))$. By $M(w)$ we denote  the image $\Phi (\{w\}\times M) \subset \C^n$.

\begin{lemma}\label{KYSSLEMMAT}
Let $M_{\C^k}$ be a manifold in $\C^n$
parametrized by $w\in \C^k$ and assume that $m=\dim M <n$. Let $\xi_1(w),\ldots, \xi_t(w)$ be
simultaneously standardizable parametrized points such that
$\xi_i(w)\in M(w)$ for $i=1,\ldots , t$. Then for $l \ge 2 \in \N$ there
exists an automorphism $\kappa \in \rm{Aut}_{hol}^k(\C^n)$ such that
$\kappa (M(w))$ osculates of order $l$ in $\xi_i(w)$ for
$i=1,\ldots, t$ and all $w\in \C^k$.

% In other words if $\phi_w:U(\subset \C^m )\to
%\kappa (M(w))$ is a local coordinate system at $\xi_i(w)$ such
%that $\phi_w(0)=\xi_i(w)$ then $\frac{\partial}{\partial
%\zeta^{\alpha}}|_{\zeta =0} \phi \in T_{\zeta_i(w)}\kappa (M(w))$
%for every multiindex $\alpha =(\alpha_1,\alpha_2, \ldots
%,\alpha_m)$ with $2\leq | \alpha |\leq s$.
\end{lemma}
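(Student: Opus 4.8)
The plan is to reduce the problem to its model form and then build the desired automorphism by a Taylor-jet correction supported near the standardized points. First I would apply the hypothesis that $\xi_1,\ldots,\xi_t$ are simultaneously standardizable: there is $\psi\in\aut_{hol}^k(\C^n)$ with $\psi(w,\xi_i(w))=(w,(i,0))$ for all $i$ and all $w\in\C^k$. Since osculation of order $l$ is preserved under affine — hence under arbitrary — parametrized automorphisms (the defining condition that the higher derivatives of a local parametrization lie in the tangent space is invariant under the biholomorphism $\psi_w$), it suffices to find $\tilde\kappa\in\aut_{hol}^k(\C^n)$ making $\psi_w(M(w))$ osculate of order $l$ at the fixed points $(i,0)$, $i=1,\ldots,t$; then $\kappa=\psi^{-1}\circ\tilde\kappa\circ\psi$ works. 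So from now on I assume the interpolation points are the standard points $\xi_i(w)\equiv(i,0)$, independent of $w$.

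Next I would reduce the osculation condition to a finite collection of jet equations. Choose, for each $i$ and each $w$, a local holomorphic parametrization $\zeta_i(w,\cdot):U\subset\C^m\to M(w)$ with $\zeta_i(w,0)=\xi_i(w)$, depending holomorphically on $w$ (this exists since $M_{\C^k}$ is the image of a parametrized embedding $\Phi$). For an automorphism $\kappa\in\aut_{hol}^k(\C^n)$, the manifold $\kappa(M(w))$ osculates of order $l$ at $\xi_i(w)$ precisely when $\frac{\partial}{\partial u^\alpha}\big|_{u=0}(\kappa_w\circ\zeta_i(w,u))\in T_{\xi_i(w)}\kappa(M(w))$ for every multiindex $\alpha$ with $2\le|\alpha|\le l$. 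This is a finite system of polynomial conditions on the finite $l$-jet of $\kappa_w$ at each of the points $(i,0)$. The task is therefore to realize a prescribed finite jet (at finitely many points, depending holomorphically on $w$) by a genuine parametrized automorphism; and I may prescribe the first-order part to be the identity so that the tangent spaces and the points themselves are left fixed.

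The key construction is then to produce, for each osculation point, an overshear- or time-one-flow type automorphism whose $l$-jet at that point cancels the unwanted higher derivatives while having trivial lower-order effect at the other points. Concretely I would write the correcting automorphism as a composition $\tilde\kappa=\tilde\kappa_t\circ\cdots\circ\tilde\kappa_1$, where $\tilde\kappa_i$ is built from a parametrized vector field of the form $g_i(w,z)\,v$ with $v$ a constant vector transverse to $M(w)$ at $(i,0)$ and $g_i$ a polynomial in $z$ chosen so that its $l$-jet at $(i,0)$ supplies exactly the correction to push the offending jet components of $\zeta_i$ into the tangent direction, while $g_i$ together with its derivatives up to order $l$ vanishes at the other points $(j,0)$, $j\ne i$; such $g_i$ exist by a parametrized Lagrange–Hermite interpolation across the finitely many fixed points. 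Because $\dim M=m<n$, there is genuine transverse room to absorb the second- and higher-order terms into the normal directions, which is what makes the jet system solvable. I would either integrate these polynomial vector fields explicitly (shear/overshear flows are automorphisms of $\C^n$, and being polynomial in $w$ they lie in $\aut_{hol}^k(\C^n)$) or, alternatively, invoke the parametrized Andersen–Lempert theory to approximate the time-one flow by an element of $\aut_{hol}^k(\C^n)$; in either case the resulting jet is an open/holomorphic condition, so finitely many steps correcting one jet-order at a time converge.

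I expect the main obstacle to be the bookkeeping of the jet equations: one must verify that the second-through-$l$-th order normal components of the parametrization can be simultaneously corrected at all $t$ points without the correction at one point spoiling the already-achieved osculation at another, and that everything can be done holomorphically and uniformly in $w\in\C^k$. The transversality afforded by $m<n$ and the Hermite interpolation decoupling the points are exactly the two ingredients that resolve this, but making the induction on jet-order (lowering $|\alpha|$ from $l$ down to $2$ while preserving lower orders) precise is the technical heart of the argument.
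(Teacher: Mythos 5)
Your overall strategy coincides with the paper's: standardize the points, reduce osculation to a finite jet-interpolation problem with identity linear part, and realize the prescribed jets by compositions of (over)shears whose coefficient functions vanish to high order at the other points (this is the paper's Lemma \ref{prekiss}, proved by induction on the number of points and on the jet order via Lemma \ref{basis}). One small remark: the paper normalizes the points to $(i,i,\ldots ,i)$ rather than $(i,0)$ precisely so that they have pairwise distinct projections along \emph{every} coordinate direction, which is what lets a shear in an arbitrary coordinate direction fix the remaining points to high order; with $(i,0)$ all the points collapse under the projection forgetting $z_1$, so shears in the $e_1$-direction need extra care. That is only bookkeeping.

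The genuine gap is in the step where you decide \emph{which} jet to prescribe. You propose to cancel the offending derivatives of the local parametrization by pushing them into the normal directions, using ``a constant vector $v$ transverse to $M(w)$ at $(i,0)$''. But the tangent space $T_{\xi_i(w)}M(w)$ varies with $w$, so no single constant vector need be transverse for all $w\in\C^k$; and, more importantly, splitting $\frac{\partial^{|\alpha|}}{\partial u^{\alpha}}\zeta_i(w,0)$ into a tangential part (harmless) and a normal part (to be cancelled) requires a projection onto a complement of $T_{\xi_i(w)}M(w)$ that depends \emph{holomorphically} on $w$ over all of $\C^k$ --- the obvious orthogonal projection is not holomorphic in $w$ and does not qualify. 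Equivalently, the admissible correcting $l$-jets at $\xi_i(w)$ form, for each $w$, a nonempty affine subspace of the jet space, i.e.\ the fiber of a locally trivial affine bundle over $\C^k$, and you must produce a global holomorphic section of it. This is exactly where the paper invokes the Oka--Grauert principle (Theorem \ref{oka}): the fibers are contractible, so a continuous section exists, hence a holomorphic one. Without this (or an equivalent holomorphic splitting of the tangent--normal sequence over the Stein parameter space), your order-by-order correction cannot even be formulated holomorphically in $w$, and the Hermite-interpolation and shear-realization steps, which are otherwise fine, have nothing to realize.
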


\subsection{Non-osculation lemma}
To keep the osculation of order $l$ in $\xi_i(w)$ and make sure that the embedding do not osculates of order $l$ at other points, we use the following lemma.

\begin{lemma}\label{AKlemma}
Let $M_{\C^k}$ be a manifold in $\C^n$ para\-metrized
by $w\in \C^k$ and denote $m = \dim M$. Also let
\begin{enumerate}[a)]
\item $K_M$ be a compact subset of $\C^k\times M_{\C^k}$. 

\item $K$ be
a compact subset of $\C^k\times \C^n$. \item $a_1,a_2,\ldots ,a_r$
be finitely many points in $K_M$. 
\item $b_1(w),b_2(w),\ldots
,b_q(w)$ be finitely many parametrized points in $\C^k\times
M(w)\setminus K_M$, (these are the points where we
want to keep the osculation order $l$). \item $l\geq 2$ be a natural
number. If $m=1$ and $n=2$ let $l\geq 3$. \item $\epsilon >0$ be a
real number.
\end{enumerate}
Then there exists an automorphism $\psi \in \rm{Aut}_{hol}^k(\C^n)$ such
that
\begin{enumerate}[1.]
\item $\psi (a_i)=a_i$ for every $i=1,2,\ldots ,r$. \item
$\psi_w(z)=z+O(|z-b_i(w)|^{l+1})$ as $z\to b_i$ for every
$i=1,2,\ldots ,q$ and every $w\in \C^k$. \item
$|\psi_w(z)-z|+|\psi_w^{-1}(z)-z|<\epsilon$ for every $(w,z)\in K$
\item There is no point $x\in K_M$ such that $\psi (M(w))$
osculates of order $l$ in $\psi (x)$.
\end{enumerate}
\end{lemma}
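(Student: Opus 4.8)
The plan is to produce the desired automorphism $\psi$ as a small perturbation of the identity that does two independent jobs at once: it must preserve the osculation of order $l$ at the prescribed parametrized points $b_j(w)$ (items 1--3), while simultaneously \emph{destroying} osculation of order $l$ at every point of the compact set $K_M$ (item 4). First I would treat items 1--3 as an interpolation/approximation problem of the type already packaged in Lemma \ref{FLYTTLEMMAT} and Lemma \ref{KYSSLEMMAT}: the conditions $\psi(a_i)=a_i$ and $\psi_w(z)=z+O(|z-b_j(w)|^{l+1})$ are finite-jet conditions at finitely many parametrized points, and the uniform smallness on $K$ is an approximation condition. By the parametrized Andersen--Lempert theory underlying those lemmas one can realize any automorphism that is $\epsilon$-close to the identity on $K$ and has prescribed $l$-jets (here, the trivial jet of the identity) at the points $a_i$ and $b_j(w)$. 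So the real content is item 4, and the strategy is to build a family of \emph{candidate} perturbations satisfying 1--3 and then show that a generic member of the family also satisfies 4.

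The heart of the argument is a genericity/transversality statement. For a fixed $w$, osculation of order $l$ at a point $x\in M(w)$ is, in suitable local coordinates, the vanishing of all mixed partials $\frac{\partial}{\partial u^\alpha}|_{0}$ of the local parametrization with $2\le|\alpha|\le l$ modulo the tangent space $T_xM(w)$ --- that is, a finite system of equations $F(w,x)=0$ in the $l$-jet of the embedding at $x$. Remark \ref{oscul}(1) records that non-osculation of order $l$ is the generic condition (for $l\ge 2$, except in the degenerate case $m=1,n=2$ where one needs $l\ge 3$), so for a generic jet the section $F$ does not vanish. The plan is to introduce finitely many free parameters into the perturbation --- for instance a composition with shears or with time-$1$ flows of polynomial vector fields supported away from the $a_i$ and $b_j(w)$ --- and to regard $F$ as a map on the total space $K_M\times(\text{parameter space})$. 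A parametric transversality (Sard) argument then shows that for almost every choice of the extra parameters, the perturbed manifold $\psi(M(w))$ osculates of order $l$ at \emph{no} point of the compact set $K_M$. Compactness of $K_M$ is what upgrades the pointwise-generic statement to the uniform statement in item 4, since non-osculation is an open condition and a finite subcover suffices.

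The main obstacle I anticipate is reconciling the rigidity of items 2 and 3 with the flexibility needed for item 4. The perturbations used to kill osculation must be chosen so that they neither move the $a_i$ nor disturb the $(l+1)$-order tangency at the $b_j(w)$ --- in other words, the free parameters have to live in the subspace of vector fields whose $l$-jets vanish at all of the marked points. I would arrange this by multiplying the generating vector fields by a fixed holomorphic function (or polynomial) vanishing to high order at the $a_i$ and the $b_j(w)$, which guarantees items 1 and 2 are untouched while still leaving enough freedom in the jets at a generic $x\in K_M$ to apply the transversality argument. One must check that this cutoff does not collapse the relevant jet-evaluation map to something with insufficient rank at points of $K_M$ lying close to the marked points; since the $b_j(w)$ are assumed to lie \emph{outside} $K_M$ and the $a_i$ are finitely many, one can separate these loci and keep the rank full on the compact region where osculation must be destroyed. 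The parametrized (i.e. $w$-dependent) bookkeeping is then routine given the $\mathrm{Aut}^k_{hol}(\C^n)$ framework and Proposition \ref{standp}, so that the whole construction proceeds holomorphically in $w\in\C^k$.
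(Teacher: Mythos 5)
Your overall strategy coincides with the paper's: construct a finite\--dimensional family $\psi_t$ of shear\--type perturbations of the identity, in coordinates adapted to $T_xM(w)$ of the form $z\mapsto z+\sum_{(\alpha,u)}t_{(\alpha,u)}h_{\alpha,u}(w,z_1,\ldots,z_m)e_u$, arranged so that \emph{every} member of the family satisfies conclusions 1--3; encode osculation of order $l$ as the vanishing of a jet\--determinant map $F^{\psi_t}$ with values in $\C^N$; verify that the $t$\--derivative of $F^{\psi_t}$ at $t=0$ is surjective; and conclude by parametric transversality (Sard) that a generic small $t$ destroys osculation on a neighborhood of each point of $K_M$. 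The globalization is as you say: compactness of $K_M$ plus the openness of non\--osculation under small perturbations (the paper, in Lemma \ref{sublemma} and the proof following it, composes finitely many such local perturbations, each chosen small enough not to re\--create osculation on the pieces already treated and to move $K$ by less than $\epsilon/r$). So the skeleton of your argument is the right one.

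There is, however, one concrete step that fails as written. You force conclusions 1 and 2 by making the perturbing fields vanish to high order at the $a_i$ \emph{and} at the $b_j(w)$. At the $b_j(w)$ this is correct, and since those points lie off $K_M$ no rank is lost where osculation must be destroyed. But the $a_i$ lie \emph{inside} $K_M$ by hypothesis c), so they cannot be ``separated'' from the region where conclusion 4 must hold: if your perturbation agrees with the identity to order $l$ at $a_i$, the $l$\--jet of $\psi(M(w))$ at $a_i$ is unchanged, and conclusion 4 fails at $x=a_i$ whenever $M(w)$ happens to osculate of order $l$ there. The repair --- and what the paper actually does --- is to impose only \emph{first\--order} vanishing of the $h_{\alpha,u}$ at (the tangential projections of) the $a_i$: conclusion 1 asks only that the point be fixed, not its jet, so the higher\--order part of the perturbation at $a_i$ stays free and the jet\--evaluation map keeps full rank on all of $K_M$. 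A smaller omission: the step ``generic $t$ avoids the zero set'' requires the number $N$ of free jet parameters to exceed the dimension of the source of $F^{\psi_t}$; this count is precisely where the hypothesis $l\ge 3$ for $m=1$, $n=2$ (Remark \ref{oscul}) enters, and it should be made explicit rather than absorbed into the phrase ``non\--osculation is generic.''
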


\section{Proof of main theorem.} \label{hauptsatz}

To be precise let us define the notion used in the formulation of Theorem \ref{main}

\begin{definition} 
Let X, Y be complex spaces, Z a complex (resp. topological) space.
A holomorphic (resp. continuous) map
$$\Phi : Z \times X \to Y$$ is called a {\sl holomorphic (resp. continuous) family of 
holomorphic embeddings of $X$ into $Y$ parametrized by (a parameter) in $Z$} if for each point
$z$ in the parameter space $Z$ the map $\Phi_z : X\to Y, \ \ x\mapsto \Phi(z,x)$ 
is  a proper holomorphic embedding. 
\end{definition}

In the proof we are working with families of embeddings of some complex space $X$
into $\C^n$ parametrized by $\C^k$ which come from the following construction.

\begin{notation} 
If a holomorphic map $$\phi : \C^k \times X \to \C^k \times \C^n\label{??.1}$$ 
of the form

\begin{equation}
\phi (w, x) = (w, \tilde \phi (w, x)), \ \ w\in \C^k,\ x\in X  \label{wei} 
\end{equation}

is a proper holomorphic embedding of $\C^k\times X$ into $\C^k \times \C^n$ then
the map $$\tilde \phi : \C^k \times X \to \C^n$$ (where $\tilde \phi$ is defined by
(\ref{wei}))
is a family of holomorphic embeddings of
$X$ into $\C^n$ parametrized by $\C^k$. Note that the contrary does not hold, i.e.,
if $$\tilde \phi : \C^k \times X \to \C^n$$ is a family of holomorphic embeddings of 
$X$ into $\C^n$ parametrized by $\C^k$, the corresponding map 
$$ \phi : \C^k \times X \to \C^k \times \C^n$$ defined by 
$$\phi (w, x) = (w, \tilde \phi (w, x)), \ \ w\in \C^k,\ x\in X$$ may fail to
be an embedding (see the example below).

Also note the following fact which we will use in our construction :

\smallskip\noindent
If $\alpha \in \aut_{hol} (\C^k\times \C^n)$ is of the form $\alpha (w,z) = (w, \tilde \alpha (w,z))$, i.e. $\alpha \in \Aut_{hol}^k (\C^n)$, then $\alpha \circ\ \phi$
is again an embedding of the form (\ref{wei}), hence its "second coordinate"
$\pi_2 \circ (\alpha \circ \phi): \C^k \times X \to \C^n$ is a
family of holomorphic embeddings
(where $\pi_2 : \C^k \times \C^n \to \C^n$ is defined by $(w, v) \mapsto v, \ \ w\in \C^k,
v \in \C^n$).
\label{param}
\end{notation}

\begin{example} 
If $$\tilde \phi : \C^k \times X \to \C^n$$ is a holomorphic family of holomorphic embeddings of 
$X$ into $\C^n$ parametrized by $\C^k$ then it is straightforward to prove that
the map $$\phi : \C^k \times X \to \C^k \times \C^n$$ defined by
$$\phi (w, x) = (w, \tilde \phi (w, x)), \ \ w\in \C^k,\ x\in X$$
is holomorphic, injective and immersive. On the other hand properness may fail,
as the following example shows:
\smallskip\noindent
We are going to define a holomorphic family of embeddings of $\C$ into $\C^2$
parametrized by $\C$.
Define $f : \C \times \C \to \C^2$ by $f(y, x)= (x + y\cdot x^2, y\cdot x)$.
For each fixed point $y$ in the parameter space $\C$ we
are given a proper, injective, immersive, holomorphic map from $\C$ into $\C^2$
(for $y\ne 0$ the second coordinate by itself gives already such an embedding, 
and for $y=0$ the first coordinate is such an imbedding). 
\par
On the other hand the map $\C \times \C \to \C\times \C^2$ defined by
$(y, x)\mapsto (y, x + y\cdot x^2, y\cdot x)$ is not proper. Indeed the sequence
$(x_n, y_n)$ defined by $x_n = n$ and $y_n = {1-n \over n^2}$ leaves any compact
subset of the definition space but is mapped onto the sequence $({1-n \over n^2}, 
1, {1-n\over n})$ which converges to the point $(0,1, -1)$ in the target space
(which is not in the image, in fact the image is not closed it is the hypersurface
$\{(a,b,c)\in \C^3 : a \cdot b =c (c+1) \}$ except the line $\{ c=-1, a=0 \}$).
\end{example}

We would like to emphasize that we will prove a slightly stronger statement than just holomorphic 
families of embeddings. Our families are always such that the map $\Phi : \C^k \times X \to \C^k \times \C^n$ is a proper holomorphic embedding which we will denote by using the symbol $\hookrightarrow$, i.e.  $\Phi : \C^k \times X \hookrightarrow \C^k \times \C^n$ for our families. This subtle point plays a role in the last section, since if $\Phi (\C^k\times X)$ is not a closed submanifold in $\C^k\times \C^n$ the
construction of pseudo-affine modification does not work.

\begin{notation}
In the proof of Theorem \ref{main} we use the property that the group 
of holomorphic automorphisms of $X$ can be exhausted by a sequence of compact 
subsets (in c.-o. topology). A Lie group (with possibly countably many 
components) can be exhausted by a sequence of compact subsets. On the other 
hand, if a locally compact topological group acts effectively on a manifold it 
is a Lie group~\cite{BM}. Since the group of holomorphic automorphisms of a 
Stein space $X$ acts effectively on the smooth part $\tilde X = X\setminus\Sing(X)$, this 
implies that $\Aut(X)$ is a Lie group if and only if it can be exhausted by 
compacts.
\end{notation}

\begin{proof} (of Theorem \ref{main})

By assumption the complex space $X$ embeds into  $\C^n$, say
$\varphi_0 : X\hookrightarrow \C^n$.   We 
start with the trivial family of embeddings of $X$ into $\C^n$, $\phi_0 :\C^k \times X\to \C^k\times
\C^n$ given by $(w,x)\mapsto (w,\varphi_0(x))$.

We will construct automorphisms $\alpha_n\in
\mathrm{Aut}^k_{hol}(\C^n)$ recursively. Let $A_n=
\alpha_n\circ\alpha_{n-1}\circ \ldots \circ \alpha_1$.
We further arrange $\alpha_n\in  \mathrm{Aut}^k_{hol}(\C^n)$ inductively such that
$$A(w,z)=\lim_{n\to \infty}A_n(w,z)=(w,\lim_{n\to \infty}\tilde A_n(w,z))$$
exist uniformly on compacts for some open neighborhood $\Omega
\subset \C^k \times \C^n$ containing $\C^k\times X$ and such that
the mapping $A:\Omega \to \C^k\times \C^n$ given by $(w,z)\mapsto
(w,\tilde A_n(w,z))$ defines a biholomorphic mapping on
$\C^k\times \C^n$. The existence of the limit follows from
Proposition 4.1 and 4.2 in \cite{F}.

Now let $\phi_n=\alpha_n\circ\alpha_{n-1}\circ \ldots \circ
\alpha_1\circ \phi_0:\C^k\times X\to \C^k\times \C^n$. Also define
$\pi_2$ as the projection of $\C^k\times \C^n$ to $\C^n$.

The family of holomorphic embeddings will then be given by the
second coordinate of $A\circ \phi_0=\lim_{n\to \infty}\phi_n$,
i.e. $\varphi =\pi_2(A\circ \phi_0)$.

It follows that $\phi=\lim_{n\to \infty}A_n\circ
\phi_0=A\circ \phi_0$ is a proper holomorphic embedding of $\C^k
\times  X$ into $\C^k\times \C^n$, which gives that $\varphi
=\pi_2(A\circ \phi_0)$ is a family of holomorphic embeddings of
$X$ into $\C^n$ parametrized by $\C^k$.

In order to  make the embeddings $\phi_w$  different for
different parameters, we will choose them such that no affine
automorphism,  can map the image of one of these embeddings onto
another.   At the same time we make sure, using growth 
conditions, that the only way to map the image of one embedding
onto another is by an affine mapping. This ensures 
that the embeddings will be non-equivalent  for different parameters $w$.

%BESKRIV KONSTRUKTIONEN I KORTHET HÄR
So the construction in short is:
\begin{enumerate}[a)]
\item Choose sufficiently many points $\xi_1(w), \ldots
,\xi_m(w)$ in correct positions such that no affine
automorphism can map the points $\{ \xi_i(w_1)\}$ into the points
$\{ \xi_i(w_2)\}$ for $w_1\neq w_2$. These points will be chosen
differently for each parameter (however holomorphically depending
on the parameter). 
\item Embed the space $X$ through these points.
\item In order to single out these points we make sure that our
embeddings osculate at these points of a certain order $l$, and
osculates of order less than $l$ at all other points. %they do not osculate at all other points. 
\item Divide the rest of
$\C^n$ into concentric shells with increasing radii. In each shell
we choose inductively points through which we later will embed the space $X$. These
points and their preimages in $X$ are inductively chosen in such a way that we
get some growth conditions on the embedding. 
\item Embed $X$ inductively through all the points with careful chosen preimage points. In each
step of the inductive process $X$ will be embedded through the (finitely many) points contained in one shell, keeping the points from previous shells contained in the image of $X$.
\item We
then show, using the growth conditions introduced in d), that for
two embeddings to be equivalent for different parameters they have
to differ by an affine automorphism.
 \item The condition in a)
shows that there is no such affine automorphism. Consequently 
the embeddings are different for all parameters.
\end{enumerate}

The first automorphism $\alpha_1$ will take care of that for
different parameters $w_1\neq w_2\in \C^k$ there is no affine
automorphism $\beta \in \textrm{Aff}(\C^n)$ mapping the image of
$\varphi_{w_1}(X)$ onto the image of $\varphi_{w_2}(X)$. For this
first use Proposition \ref{pointprop} to get $m$ points
$\xi_1,\ldots \xi_m:\C^k \to \C^n$ parametrized by an parameter.
Then choose $m$ points, $\eta_1,\ldots ,\eta_m$ in the smooth part $\tilde X$ of $X$ or more exactly of
$\phi_0 (X)\subset \C^n$. After that use Lemma \ref{FLYTTLEMMAT} to
find an automorphism $\gamma \in \mathrm{Aut}^k_{hol}(\C^n)$ such
that $\gamma (w,\eta_i)=(w,\xi_i(w))$ for every $w\in \C^k$.

Fix a natural number $l\ge 2$ such that not osculating of order
$l$, at any point, is a generic property for a submanifold of
$\dim X$ in $\C^n$ (see Remark \ref{oscul}). Using Lemma \ref{KYSSLEMMAT} and Lemma
\ref{AKlemma} we get an automorphism $\delta \in
\mathrm{Aut}^k_{hol}(\C^n)$ with $\delta
(w,\xi_i(w))=(w,\xi_i(w))$ for every $w\in \C^k$ that prescribes
the higher derivatives of $\delta$ in the $\C^n$-direction such
that for all $w\in \C^k$ the subvariety $\delta (\gamma
(w,\varphi_0(X)))$ of $\C^n$ will be tangent of order $l$ at the
points $\xi_i(w)$.

The automorphism $\alpha_1\in \mathrm{Aut}^k_{hol}(\C^n)$ is now
given by $\alpha_1=\delta \circ \gamma$. We also note that with
$\alpha_1$ we have the osculation order in the points we want, so in the
future we only have to make sure we do not destroy the osculation order
in $\xi_i(w)$ but destroy it everywhere else.

We now choose an exhaustion of the space $T=\aut (X)$ with compact
sets $T_i$ such that  $$T=\cup_{i=1}^{\infty}T_i\textrm{  and
}T_i\subset \overset{\circ}T_{i+1}.$$ Further let $\rho :X\to
\R^{\geq 0}$ be a continuous exhaustion function of $X$, so
$X_r:=\rho^{-1}([0,r])$ is a compact subset of $X$ for every
$r\geq 0$. ($X_r=\{ x\in X:\rho (x)<r \} \subset \subset X$ for
every $r$, for example $\rho (x)=\| \iota (x)\|^2$ will work.)

Denote the unit ball in $\C^n$ by $\mathbb B_n=\{ z\in \C^n:\| z\|
<1\}$ and $\B_k =\{ w\in \C^k:\| w\| <1\}$. Choose a sequence of
relatively open neighborhoods $U_i$, $i=1,2,3,\ldots$ of the set
$\eta =\cup_{i=1}^m\{ \eta_i\}$ in $X$ with
$\cap_{i=1}^{\infty}U_i=\eta$ and $\overline U_{i+1}\subset
\overset{\circ}U_i$.

Remember that the points $\eta_i$ in $X$ are the preimages of the
points in $\C^n$ at which the varieties $\varphi (w,X)$ have
osculation of order $l$, i.e. the points $\xi_i(w)$, for every
$w\in \C^k$.

Now we inductively, for $\mu =1,2,3,\ldots$, define real numbers
$\epsilon_{\mu}, R_{\mu}>0$,  finite subsets $\cup_{j=1}^{k(\mu
)}\{ a^{\mu}_j\}$ of $\partial (\mu +1)\B_n$ and finite subsets
$\cup_{j=1}^{k(\mu )}\{ x^{\mu}_j\}$ of $X$ with the same
cardinality $k(\mu )$, and automorphisms $\alpha_{\mu}\in
\mathrm{Aut}^k_{hol}(\C^n)$ of $\C^n$ parametrized by $w\in \C^k$.
When choosing the subsets $\cup_{j=1}^{k(\mu )}\{ a^{\mu}_j\}$ and
$\cup_{j=1}^{k(\mu )}\{ x^{\mu}_j\}$ it is important to remember
that since we are going to embed the point $x^{\mu}_j$ of $X$
through $a^{\mu}_j$, we have to choose the points $a^{\mu}_j$ such
that $\cup_{j=1}^{k(\mu )}\{ a^{\mu}_j\} \cap
\pi_2(\phi_{\mu}(\C^k\times X))=\emptyset$. This is possible by
Lemma \ref{lemma4.3copy} due to the dimension of $X$, since
$\pi_2(\phi_{\mu}(\C^k\times X))\cap \partial (\mu +1)\B_n$ has
measure zero in $\partial (\mu +1)\B_n$.

Start by letting $\epsilon_1=1, R_1=1, k(1)=m$ and $\alpha_1$ be
as constructed earlier. For $\mu \geq 2$ we construct these
entities such that the following properties are satisfied:

\begin{enumerate}[$1_{\mu}$.]
\item $0<\epsilon_{\mu}<\frac{\epsilon_{\mu -1}}{3}$ \item If
$F:\B_n\to (\mu +2)\B_n\setminus \cup_{j=1}^{k(\mu )}\{
a^{\mu}_j\}$ is a holomorphic mapping with $\| F(0)\| \leq
\frac{\mu +1}2$ and $| JF(0)|\geq 1$ then
$F((1-\frac{\epsilon_{\mu}}{2})\B_n)\subset (\mu +1)\B_n$.

\item $\phi_{\mu}(w,x^{\mu}_j)=\alpha_{\mu}\circ \phi_{\mu
-1}(w,x^{\mu}_j)=(w,a^{\mu}_j)$ and $\rho (x^{\mu}_j)>\max \{ \rho
(t(x)):t\in T_{\mu}, x\in P_2(\phi_{\mu -1}^{-1}(\mu \overline
\B_k\times  \mu \overline \B_n))\}$ where  $P_2:\C^k \times X\to
X$ is given by $P_2(w,x)=x$.

\item  $\| \alpha_{\mu}\circ \phi_{\mu -1}(w,x)-\phi_{\mu
-1}(w,x)\| =\| \phi_{\mu }(w,x)-\phi_{\mu -1}(w,x)\| \leq
\epsilon_{\mu}$ for every $x\in X_{R_{\mu -1}}$ and for every
$w\in (\mu -1)\overline \B_k$.

\item $\| \alpha_{\mu} (w,z)-(w,z)\| \leq \epsilon_{\mu}$ for
$(w,z)\in \mu \overline \B_k \times \mu \overline \B$

\begin{notation}
Motivated by $4_{\mu}$ and $5_{\mu}$ we define the compact set
$$K_{\mu}=(\mu \overline \B_k \times \mu \overline \B_n)\cup \phi_{\mu -1}( (\mu -1) \overline \B_k \times \overline X_{R_{\mu -1}}).$$
\end{notation}

\item $\alpha_{\mu}\circ \phi_{\mu -1}(w,x^l_j)=(w,a_j^l)$ for
every $w\in \C^k$  and for every $l<\mu$, $j=1,2,\ldots ,k(l)$.

\item $\alpha_{\mu}(w,z)=(w,z)+O(|z-\xi_i(w)|^{l+1})$ as $z\to
\xi_i(w)$.

\item For fix $w\in \mu \overline \B_k$ the submanifold
$\pi_2(\phi_{\mu}(w,\tilde X))$ of $\C^n$ do not osculate of order
$l$ in any point $\pi_2(\phi_{\mu}(w,x))$ with $x\in (X_{R_{\mu
-1}}\cap \tilde X )\setminus U_{\mu}$. Here $\tilde X$ is the
union of all components of $X$ which are smooth and of maximal
dimension.

\item $\| \phi_{\mu}(w,x)\| \geq \mu +1$ for every $x\in
X\setminus X_{R_{\mu }}$ and for every $w\in \mu \overline
\B_k$.

\item $R_{\mu  }>R_{\mu -1}+1$.

\end{enumerate}

We will now confirm that such a construction is possible.

For step 2 of the induction we choose, in the following order,

\begin{enumerate}[(1)]
\item $\epsilon_2<\frac{\epsilon_1}{3}$ \item $\cup_{j=1}^{k(2)}\{
a^2_j\}$ a finite subset of $\partial (3\B_n)\subset \C^n$ which
does not intersect the image $\pi_2(\phi_1 (\C^k\times X))$ and
satisfies $2_2$. This is possible by Lemma \ref{lemma4.3copy}, namely
$\dim X+k<n$ makes it possible to choose the points $\{ a^2_j\}$
outside $\pi_2(\phi_1 (\C^k\times X))$ (which has measure zero by Sards theorem).

\item $\cup_{j=1}^{k(2)}\{ x^2_j\}$ a finite subset of $X$ such
that $\rho (x_j^2)>\max \{ \rho (t(x)): t\in T_2, x\in
P_2(\phi_1^{-1}(2\overline \B_k \times 2\overline \B_n))\}$.
Property $3_2$ will then be fulfilled. We shall also choose
$x_j^2$ such that $(w,x_j^2)\notin \hat K_2$ for every $j$, where
$K_2=(2\overline \B_k \times 2\overline \B_n)\cup \phi_1(
1\overline \B_k \times \overline X_{R_1})$ is a compact subset
of $\C^k \times \C^n$. As $\phi_1$ is a proper holomorphic
embedding, Lemma \ref{FGRlemma2copy} gives that $\hat K_2$ is
contained in $ (2\overline \B_k \times 2\overline \B_n)\cup
\phi_1( 1\overline \B_k \times \overline X )$, in particular the
points $(w,a_j^2)$ will not intersect the set $\hat K_2$.

\end{enumerate}

By our dimension assumptions ($k<n-1$) the parametrized points $\{
(w,a^2_j),(w,x^2_j)\}$ are simultaneously standardizable
(Corollary \ref{standp}). We will now use Lemma \ref{FLYTTLEMMAT}
$k(2)$ times to find an automorphism $\alpha_2'\in
\mathrm{Aut}_{hol}^k(\C^n)$ holomorphically depending on $w\in \C^k$
such that $4_2,5_2,6_2$ and $7_2$ are satisfied with
$\epsilon_2/2$ instead of $\epsilon_2$, 
(as we will combine $\alpha'$ with another automorphism $\alpha''$).

Using Lemma \ref{AKlemma} we find an automorphism $\alpha_2''\in
\mathrm{Aut}_{hol}^k(\C^n)$ not moving any point of $\alpha_2'(K_2)$
more than $\epsilon_2/2$, matches the identity up to order $l$ at
the points $(w,\xi_i(w))$ for every $w\in \C^k$ and fixing the
points $(w,a_j^2)$ $j=1,\ldots ,k(2)$ for every $w\in \C^k$. In
addition, for $w\in 2\overline \B_k$,  the submanifold
$\alpha_2''\circ \alpha_2' \circ \phi_1(w,\tilde X)$ of $\C^n$ do
not osculate of order $l$ in any of the points $P_2(\phi_1 (w,x))$
with $x\in (X_{R_1}\cap \tilde X)\setminus U_1$. The composition
$\alpha_2''\circ \alpha_2'$ will then satisfy $4_2,5_2,6_2,7_2$
and $8_2$. Finally choose $R_2$ so large that $9_2$ and $10_2$ are
satisfied.

The induction for step $s$, $s>2$, goes exactly as step 2.

At all steps $s$, we have to make sure that the property not to
osculate of order $l$ for $\phi_{s-1}\circ \phi_0(\tilde X)$ is
preserved in every point of the image of $(X_{R_{s-2}}\cap \tilde
X)\setminus U_{s-2}$. We therefore have to choose $\epsilon_s \leq
\epsilon_{s-1}$ so small that every perturbation of
$\phi_{s-1}\circ \phi_0:\tilde X\hookrightarrow \C^n$ less than
$3\epsilon_s$ on the compact $(X_{R_{s-2}}\cap \tilde X)\setminus
U_{s-2}$ do not destroy that property.

Because of $5_{\mu}$ and the fact that $\epsilon_{\mu}<\frac 1{\mu}$ Proposition 4.1 and 4.2 from \cite{F}  gives that $A=\lim_{\mu \to \infty}A_{\mu}(w,z)=(w,\lim_{\mu \to \infty}\tilde A_{\mu}(w,z))$ exists uniformly on compacts on $\Omega =\cup_{\mu =1}^{\infty}A_{\mu}^{-1}(\mu \B_k \times \mu \B_n)$ and defines a biholomorphic mapping from $\Omega$ onto $\C^k\times \C^n$. By $4_{\mu }$ the set $\C^k \times X$ is contained in $\Omega$.
Since $A_{\mu}\in \mathrm{Aut}_{hol}^k(\C^n)$ for each fixed $w_0\in \C^k$ the map $A_{w_0}:\Omega_{w_0}\to \C^n$, given by $(w,z)\mapsto \tilde A(w,z)$, is a biholomorphic mapping from $\Omega_{w_0}=\{ z\in \C^n: (w_0,z)\in \Omega \}$ onto $\C^n$, and $\Omega_{w_0}$ contains $X$.
Therefore for all $w_0\in \C^k$ the map $\phi_{w_0}$ defined by
$x\mapsto \phi (w_0,x)$ is a proper holomorphic embedding  of $X$.

We will now confirm that the constructed embedding satisfies the
theorem. Property $3_{\mu}$ and $6_{\mu}$ gives for every $n\in
\N$ that $\phi (w,x_j^n)=(w,a^n_j)$ for every $w\in \C^k$ and
$j=1,2,\ldots ,k(\mu )$.

Define $\epsilon =\sum_{i=2}^{\infty}\epsilon_i$, condition
$1_{\mu}$ gives that $\epsilon <\frac 12$.

Now suppose that there is a non-degenerate holomorphic mapping
$F:\C^n\to \C^n$ and that there are two values $w_1\neq w_2\in
\C^k$ of the parameter space such that $F^{-1}(\C^n\setminus
\phi_{w_2}(X)) = \C^n\setminus \phi_{w_1}(X)$ and that
$\phi^{-1}_{w_2}\circ F\circ \phi_{w_1}=t$ for some $t\in T$, i.e.
some element of the family of automorphisms of $X$. In particular
this will hold if for $w_1\neq w_2$ the embeddings $\phi_{w_1}$ and
$\phi_{w_2}$ are equivalent.

By moving the origin by an arbitrary small translation, we can
assume that $JF(0)\neq 0$. Let $\beta
=\prod_{i=2}^{\infty}(1-\frac{\epsilon_i}{2})>0$ and let $\nu_0$
be a number so large that for every $\nu \geq \nu_0$ we have that
$t\in T_{\nu}$, $JF(0)>\frac 1{(\nu \beta )^n}$, $F(0)\in
\frac{\nu +2}{2}\B$ and $w_1,w_2\in \nu \B_k$.

For a given $\nu \geq \nu_0$ choose a natural number $k$ such that
$F(\nu \beta \B_n )\subset (k+2)\B_n$ and $k>\nu +2$. Define
$F_j(z):\B_n \to (k+2)\B_n$ by $F_j(z)=F(z\cdot \nu
\prod_{l=j+1}^k(1-\frac{\epsilon_l}{2}))$ for $j=1,2,\ldots ,k$.
For some fix $j\in \{ \nu +1,\ldots ,k\}$ we have that $F_j(\B_n
)=F(\nu \prod_{l=j+1}^k(1-\frac{\epsilon_l}{2})\B_n )\subset F(\nu
\B_n )$, which by Lemma \ref{LEMMAB} implies that $F_j(\B_n )$ do not
contain any point $a^j_l\in \partial (j+1)\B_n$, $l=1,2,\ldots
,k(j)$ and $j\geq \nu +1$.

In addition we have
$$| JF_j(0)|=|\nu^n\prod_{l=j+1}^k(1-\frac{\epsilon_l}{2})^nJF(0)|>\nu^n\beta^nJF(0)>1$$
and $F_j(0)=F(0)\in \frac{j+1}2\B_n$. Property $2_k$ now gives that
$F_k((1-\frac{\epsilon_k}{2})\B_n )=F_{k-1}(\B_n )\subset (k+1)\B_n$.
Induction from $k$ down to $\nu +1$ gives that $F_{j-1}(\B_n
)\subset (j+1)\B_n$ so, for $j=\nu +1$ we have $F_{\nu}(\B_n )\subset
(\nu +2) \B_n$ and therefore $F(\beta \nu \B_n )\subset (\nu +2)\B_n$
for every $\nu \geq \nu_0$. This growth condition implies that $F$
is an affine mapping, and the fact that $F$ is non-degenerate
means that $F$ is an affine automorphism.

Remember that $(\varphi_1)_w (\tilde X)$ osculates of order $l$ at $\xi_i(w)$, for $w\in \C^k$, and this is preserved by $7_{\mu}$ over the induction.
Hence, we see that $\varphi_w (\tilde X)$ osculates of order $l$
at $\varphi (x_i)=\xi_i(w)$, $i=1,2,\ldots ,m $. From property
$8_{\mu}$ it follows that $\varphi_w (\tilde X)$ do not osculate
of order $l$ in any other point. Due to this, since the affine
automorphism $F$ maps $\varphi_{w_1}(\tilde X)$ to
$\varphi_{w_2}(\tilde X)$, it also maps the set  $\{ \xi_i(w_1)
\}_{i=1}^m$ to the set $\{ \xi_i(w_2) \}_{i=1}^m$. By the choice
of $\{ \xi_i(w)\}$ there is no such automorphism, see Proposition
\ref{pointprop}.
\end{proof}

\begin{lemma}\label{LEMMAA}
In the notation of the proof of Theorem \ref{main} holds:
\begin{multline*}
\varphi^{-1}((\nu -1)\overline \B_k \times (\nu -1)\B_n)\subset P_2((A_{\nu} \circ \phi_0)^{-1}((\nu -1)\overline \B_k \times \nu \B_n))=\\
=P_2(\phi_{\nu}^{-1}((\nu -1)\overline \B_k \times \nu \B_n))
\end{multline*}
\end{lemma}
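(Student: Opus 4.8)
The plan is to compare the limit embedding $\phi=A\circ\phi_0$ with its $\nu$-th approximation $\phi_\nu=A_\nu\circ\phi_0$ and to show that on the relevant bounded region the two differ by less than $1$ in the $\C^n$-direction. Recall that $\phi=A\circ\phi_0$ is the full embedding whose second component is the family $\varphi$; since both $\phi$ and $\phi_\nu$ preserve the $w$-coordinate and the right-hand side of the asserted inclusion is $P_2(\phi_\nu^{-1}(\cdots))$, the trivial equality $A_\nu\circ\phi_0=\phi_\nu$ reduces everything (reading the left-hand side as $P_2(\phi^{-1}(\cdots))$ and dropping $P_2$) to the fibered inclusion
$$\phi^{-1}\big((\nu-1)\overline \B_k\times(\nu-1)\B_n\big)\ \subset\ \phi_\nu^{-1}\big((\nu-1)\overline \B_k\times\nu\B_n\big).$$
Writing $\phi(w,x)=(w,\tilde\phi(w,x))$ and $\phi_\nu(w,x)=(w,\tilde\phi_\nu(w,x))$, the whole statement comes down to the pointwise implication: if $w\in(\nu-1)\overline \B_k$ and $\|\tilde\phi(w,x)\|<\nu-1$, then $\|\tilde\phi_\nu(w,x)\|<\nu$.

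First I would factor the limit at level $\nu$. Grouping the tail of the defining product gives $A=B\circ A_\nu$ with $B=\lim_{M\to\infty}\alpha_M\circ\cdots\circ\alpha_{\nu+1}\in\mathrm{Aut}^k_{hol}(\C^n)$ (the limit exists by the same Propositions 4.1 and 4.2 of \cite{F} used to produce $A$), hence $\phi=B\circ\phi_\nu$ and $\phi_\nu=B^{-1}\circ\phi$. Thus $\tilde\phi_\nu(w,x)=\pi_2\big(B^{-1}(w,\tilde\phi(w,x))\big)$, and the claim reduces to the estimate that $B^{-1}$ moves every point of $(\nu-1)\overline \B_k\times(\nu-1)\overline \B_n$ by at most $\sum_{\mu>\nu}\epsilon_\mu$ in the $\C^n$-direction.

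To prove this estimate I would run the inverse maps as a telescoping orbit, starting from the \emph{small} point $p:=\tilde\phi(w,x)$ with $\|p\|<\nu-1$. For fixed $M$ apply $\alpha_M^{-1},\alpha_{M-1}^{-1},\dots,\alpha_{\nu+1}^{-1}$ in turn; by induction each intermediate point has $\C^n$-norm $<\nu$, so it lies in the ball on which condition $5_\mu$ controls $\alpha_\mu$ (and hence $\alpha_\mu^{-1}$) to within $\epsilon_\mu$, the indices $\mu=M,\dots,\nu+1$ always exceeding $\nu$. Consequently each step displaces the point by at most $\epsilon_\mu$ while keeping it inside the controlling ball, so the orbit stays confined and the total displacement telescopes to at most $\sum_{\mu=\nu+1}^{M}\epsilon_\mu$. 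Letting $M\to\infty$ and using $\sum_{\mu\ge 2}\epsilon_\mu=\epsilon<\tfrac12$ (condition $1_\mu$) yields $\|\tilde\phi_\nu(w,x)-p\|\le\epsilon<\tfrac12$, whence $\|\tilde\phi_\nu(w,x)\|<(\nu-1)+\tfrac12<\nu$, as required; applying $P_2$ finishes the argument.

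The one delicate point, and the step I expect to be the main obstacle, is that condition $5_\mu$ as stated bounds only $\|\alpha_\mu(w,z)-(w,z)\|$, whereas the orbit argument needs the same bound for $\alpha_\mu^{-1}$ on the ball of radius slightly below $\mu$. This is exactly the two-sided control that underlies the convergence of both $A$ and $A^{-1}$ in the cited Forstneri\v c--Rosay framework, and it is built into the automorphisms produced by Lemma \ref{AKlemma} (item 3 there bounds $|\psi_w(z)-z|+|\psi_w^{-1}(z)-z|$). I would record this two-sided estimate explicitly; then the confinement of the orbit — the crucial observation that, because we start from the small endpoint $p$ rather than from the unknown $\tilde\phi_\nu(w,x)$, every iterate stays inside the controlled balls — makes the telescoping rigorous and avoids any need for the growth conditions $9_\mu$.
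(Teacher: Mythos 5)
Your argument is correct and rests on exactly the same mechanism as the paper's proof: condition $5_{\mu}$ combined with the Rouch\'e argument of Remark \ref{roch} to get two--sided control of $\alpha_{\mu,w}$ on the balls where it matters, and the summability $\sum_{\mu}\epsilon_{\mu}<\tfrac12$ from condition $1_{\mu}$ to telescope the displacements. The difference is one of organization, and it matters at exactly one point. The paper never inverts the infinite tail $B=\lim_{M}\alpha_M\circ\cdots\circ\alpha_{\nu+1}$: it converts the Rouch\'e estimate into the chain of preimage inclusions $(\phi_{\nu})_w^{-1}(\nu\B_n)\supset(\phi_k)_w^{-1}((\nu-2\epsilon)\B_n)$ for each \emph{finite} $k$, and then feeds in the finite-stage points $\phi_{k,w}(x)$, which for $k$ large lie in $(\nu-1+\delta)\overline\B_n\subset(\nu-2\epsilon)\B_n$ simply because they converge to $\varphi_w(x)$. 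Your version instead estimates $\|B_M^{-1}(p)-p\|\le\sum_{\mu=\nu+1}^{M}\epsilon_{\mu}$ at the limit point $p=\phi(w,x)$ and then lets $M\to\infty$; that last step silently uses $B_M^{-1}(p)\to B^{-1}(p)=\phi_{\nu}(w,x)$, i.e.\ convergence of the \emph{inverse} compositions. This is true (it is part of what Propositions 4.1--4.2 of \cite{F} deliver, or can be extracted by a Hurwitz-type argument from the locally uniform convergence of the $B_M$ together with your confinement estimate), but it is not free and should be stated explicitly. Your diagnosis of the delicate point --- that $5_{\mu}$ only bounds the forward maps --- is exactly right and is resolved exactly as in Remark \ref{roch}; the one remaining item to make explicit is the inverse-limit step above, which the paper's preimage-inclusion formulation is designed precisely to avoid.
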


\begin{proof}
Let $w\in (\nu -1)\overline \B_k$ be a fix point and consider
some $x\in \varphi_w^{-1}((\nu -1)\overline \B_n)$, where
$\varphi_w^{-1}$ is the restriction of $\varphi^{-1}$ to $\{ w\}
\times  \C^n$. This implies that $\varphi_w(x)\in (\nu
-1)\overline \B_n$. Now choose $k_0>\nu$ and $0<\delta <
1-2\epsilon$ such that
\begin{equation}\label{A*}
\phi_{k,w}(x)\in (\nu -1+\delta )\overline \B_n \textrm{ for every }
k\geq k_0 .
\end{equation}
Property $5_{\nu +1}$ gives that $\| \alpha_{\nu +1}(w,z)-(w,z)\|
\leq \epsilon_{\nu +1}$ for $(w,z)\in (\nu +1)\overline (\B_k
\times \overline \B_n )$ and Rouchés theorem that $\alpha_{\nu
+1,w}(\nu \B_n)\supset (\nu -2\epsilon_{\nu})\B_n$, see Remark
\ref{roch}, or in other words $\nu \B_n \supset (\alpha_{\nu
+1,w})^{-1}((\nu -2\epsilon_{\nu})\B_n )$ so
\begin{multline*}
(\phi_{\nu})_w^{-1}(\nu \B_n )=(A_{\nu}\circ \phi_0)_w^{-1}(\nu \B_n )\supset (A_{\nu +1}\circ \phi_0)_w^{-1}((\nu -2\epsilon_{\nu})\B_n )=\\
=(\phi_{\nu +1})^{-1}_w((\nu -2\epsilon_{\nu})\B_n ).
\end{multline*}

Induction using $5_{\nu +2},\ldots ,5_k$ gives
\begin{multline}\label{B*}
(A_{\nu}\circ \phi_0)_w^{-1}(\nu \B_n )\supset (A_k\circ \phi_0)_w^{-1}((\nu -2\sum_{l=\nu}^{k-1}\epsilon_l)\B_n )\supset \\
\supset (A_k\circ \phi_0)_w^{-1}((\nu -2\epsilon )\B_n ) .
\end{multline}
By our choice of $\delta$ we have $\nu -2\epsilon >\nu -1+\delta$,
so \eqref{A*} and \eqref{B*} implies that $\phi_{\mu}^w(x)\in \nu
\B_n$.
\end{proof}

\begin{notation}\label{roch}
To see that $\alpha_{\nu +1,w}(\nu \B_n )\supset (\nu
-2\epsilon_{\nu})\B_n$ holds, we consider the following situation,
$\| \alpha_q(w,z)-(w,z)\| \leq \epsilon_q$ for $(w,z)\in
q\overline \B_k \times q\overline \B_n$.
\begin{multline*}
\| \alpha_q(w,z)-(w,z)\| =\| \alpha_q(w,z)-(w,p)-((w,z)-(w,p))\| \leq \\
\epsilon_q <\| (w,z)-(w,p)\| \textrm{ for } z\in q\overline \B_k
\times \partial q\overline \B_n \textrm{ and } p\in
(q-2\epsilon_q)\overline \B_n .
\end{multline*}
Since $(w,z)-(w,p)$ has a root and consequently, by Rouché,
$\alpha_q(w,z)-(w,p)$ will too. So for every $p\in
(q-2\epsilon_q)\overline \B_n$ we always have a solution to the
equation $\alpha_q(w,z)=(w,p)$ for some $z\in q\overline \B_n$,
therefore we draw the conclusion that $\alpha_{\nu +1,w}(\nu \B_n
)\supset (\nu -2\epsilon_{\nu})\B_n$.
\end{notation}

\begin{lemma}\label{LEMMAB}
In the notation of the proof of Theorem \ref{main} holds: For
every $j\geq \nu+1$ we have that $ F(\nu \B_n)\cap
\cup_{l=1}^{k(j)}\{ a^j_l\} =\emptyset$.
\end{lemma}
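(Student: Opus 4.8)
The plan is to argue by contradiction: suppose that for some $j\geq\nu+1$ and some index $l$ the point $a^j_l$ lies in $F(\nu\B_n)$, so that $F(z)=a^j_l$ for some $z$ with $\|z\|<\nu$. Recall from properties $3_{\mu}$ and $6_{\mu}$ that $\varphi_w(x^j_l)=a^j_l$ for \emph{every} parameter $w$; in particular $a^j_l=\varphi_{w_2}(x^j_l)\in\varphi_{w_2}(X)$. Since the hypothesis $F^{-1}(\C^n\setminus\varphi_{w_2}(X))=\C^n\setminus\varphi_{w_1}(X)$ says that $F$ carries the complement of $\varphi_{w_1}(X)$ onto the complement of $\varphi_{w_2}(X)$, the preimage $z$ must lie on $\varphi_{w_1}(X)$, say $z=\varphi_{w_1}(x)$. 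Feeding this into the intertwining relation $\varphi_{w_2}^{-1}\circ F\circ\varphi_{w_1}=t$ gives $\varphi_{w_2}(t(x))=a^j_l=\varphi_{w_2}(x^j_l)$, and as $\varphi_{w_2}$ is injective we conclude $t(x)=x^j_l$.

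Now I would exploit that $\|z\|=\|\varphi_{w_1}(x)\|<\nu$ to locate $x$ inside the preimage set appearing in property $3_j$. Applying Lemma \ref{LEMMAA} (with the index shifted up by one to accommodate $\|z\|<\nu$ rather than $\|z\|<\nu-1$) places $x$ in $P_2(\phi_{\nu}^{-1}$ of a ball of radius about $\nu)$. I would then push this forward from stage $\nu$ to stage $j-1$: since $\phi_{j-1}=\alpha_{j-1}\circ\cdots\circ\alpha_{\nu+1}\circ\phi_{\nu}$ and each $\alpha_{\mu}$ displaces points of $\mu\overline\B_k\times\mu\overline\B_n$ by at most $\epsilon_{\mu}$ (property $5_{\mu}$), the total displacement incurred is at most $\sum_{\mu\geq\nu+1}\epsilon_{\mu}<\tfrac12$. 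Because $j\geq\nu+1$ leaves a full unit of room between radius $\nu$ and radius $j$, the image stays inside $j\overline\B_k\times j\overline\B_n$ at every intermediate stage, and so $x\in P_2(\phi_{j-1}^{-1}(j\overline\B_k\times j\overline\B_n))$. But $t\in T_{\nu}\subseteq T_j$ and $t(x)=x^j_l$, so property $3_j$ yields $\rho(x^j_l)=\rho(t(x))<\rho(x^j_l)$, a contradiction. Hence no such $a^j_l$ can belong to $F(\nu\B_n)$.

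The main obstacle is the bookkeeping in this middle step: transferring the smallness of $\|z\|$ from the limit embedding $\varphi$ to a statement about the finite stage $\phi_{j-1}$, and making sure the radii line up so that the point genuinely lands in the ball $j\overline\B_k\times j\overline\B_n$ that enters property $3_j$. This is precisely what Lemma \ref{LEMMAA} is designed to handle, while the slack needed for the stage-to-stage comparison is guaranteed by the summability $\sum\epsilon_{\mu}<\tfrac12$ coming from $1_{\mu}$ together with the gap $j-\nu\geq1$. Everything else is a formal consequence of the intertwining relation $\varphi_{w_2}^{-1}\circ F\circ\varphi_{w_1}=t$ and the defining growth property $3_j$ of the osculation preimages $x^j_l$.
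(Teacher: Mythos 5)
Your argument is correct and follows essentially the same route as the paper's own proof: contradiction via the intertwining relation to get $t(x)=x^j_l(w_2)$, then Lemma \ref{LEMMAA} plus the $5_\mu$-estimates to place $x$ in the preimage set governed by property $3_j$, yielding the contradictory inequality $\rho(t(x))<\rho(x^j_l(w_2))$. The index bookkeeping you flag (shifting from $\nu$ to $\nu+1$ and then pushing forward to stage $j-1$ using $\sum\epsilon_\mu<\tfrac12$) is exactly how the paper handles it.
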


\begin{proof}
Suppose, to reach a contradiction, that there exist $z\in \nu \B_n$
such that $F(z)=a^j_l$ for some $j\geq   \nu +1$ and some $l$
between $1$ and $k(j)$. Since $F^{-1}(\C^n\setminus
\varphi_{w_2}(X))= \C^n\setminus \varphi_{w_1}(X)$, we have that
$z\in \varphi_{w_1}(X)$.

Let $x=\varphi^{-1}_{w_1}(z)\in \varphi^{-1}_{w_1}(\nu \B_n )$,
which gives $F\circ
\varphi_{w_1}(x)=a^j_l=\varphi_{w_2}(x^j_l(w_2))$. Thus
$t(x)=\varphi^{-1}_{w_2}\circ F\circ
\varphi_{w_1}(x)=x^j_l(w_2)$. Using Lemma \ref{LEMMAA} we conclude
that $x\in P_2(\phi^{-1}_{\nu +1,w_1}((\nu +1)\overline \B_n))$
and with $t\in T_{\nu}$ and $w_1\in \nu \B_k$ it follows that
$$\rho (t(x))\leq \max_{\phi_{\nu +1} ^{-1}(\nu +1)\overline \B_n ,t\in T_{\nu},w\in \nu \B_k}\rho(t(y)). $$
Since $j\geq \nu +1$ we have
$$\rho (t(x))\leq \max_{\phi_{j,w}^{-1}j\overline \B_n ,t\in T_{j-1},w\in (j-1) \B_k}\rho(t(y)), $$
(for $j>\nu +1$ we have that $\phi_j$ maps the inverse image
$\phi^{-1}_{j-1}((j-1)\B_n )$ into $(1+ \epsilon_{j})(j-1)\B_n \subset
j\B_n $). However condition $3_j$ gives
$$\rho (x^j_l(w))>\max_{y\in \phi^{-1}_{j-1,w}(j\overline \B_k \times j\overline \B_n), t\in T_j}\rho (t(y)).$$
Therefore $\rho(x_l^j(w_2))>\rho (t(x))$, which contradicts
$t(x)=x_l^j(w_2)$.
\end{proof}

\section{Eisenman hyperbolicity of the embeddings} \label{Eisenman}
Let $M$ be a complex manifold of dimension $n$. We denote the
holomorphic tangent bundle of $M$ by $TM$ and the holomorphic
tangent space at $p\in M$ by $T_pM$. The $k$-th exterior power of
$T_pM$ and $TM$ will be denoted by $\bigwedge^kT_pM$ and
$\bigwedge^kTM$. Let also $D^k_pM$ and $D^kM$ denote the set of
decomposable elements in $\bigwedge^kT_pM$ and $\bigwedge^kTM$.

Recall
that the Eisenman $n$-norm for a $u\in D^n_pM$ is defined as \cite{E}, \cite{GW}
$$E_n^M(p,u)=
\inf \{ \| v\|^2:v\in D_0^n\B_n, \exists F\in\hol (\B_n
,M),F(0)=p, F_*v=u\} .$$

A complex manifold is called $n$-Eisenman hyperbolic if
$E_n^M(p,u) >0$ for all $p\in M$ and all non-zero $u\in D^n_pM$. Compare with \cite{BF}.

We use the notation from the proof of Theorem \ref{main}.
\begin{theorem}[Addition to Theorem \ref{main}]\label{mainadd}
For all $w\in \C^k$ the complement $\C^n \setminus \varphi_w(X)$
of the embedding $\varphi_w(X)$ is Eisenman $n$-hyperbolic.
\end{theorem}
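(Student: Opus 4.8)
The plan is to reduce Eisenman $n$-hyperbolicity to a uniform bound on Jacobian determinants and then extract that bound from exactly the growth restrictions built into the construction. First I would record that, since $\dim M=n$, the top exterior powers $\bigwedge^n T_0\B_n$ and $\bigwedge^n T_pM$ are one-dimensional, so every element is decomposable; writing $v=\lambda\,e_1\wedge\dots\wedge e_n$ and $u=c_u\,\partial_1\wedge\dots\wedge\partial_n|_p$ one has $F_*v=\lambda\,JF(0)\,\partial_1\wedge\dots\wedge\partial_n|_p$, so the constraint $F_*v=u$ forces $JF(0)\neq0$ and $\|v\|^2=|c_u|^2/|JF(0)|^2$. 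Consequently
$$E_n^M(p,u)=\frac{|c_u|^2}{\sup\{\,|JF(0)|^2 : F\in\hol(\B_n,M),\ F(0)=p\,\}},$$
and the theorem is equivalent to the assertion that for each fixed $p\in M=\C^n\setminus\varphi_w(X)$ the Jacobian determinants $|JF(0)|$ of holomorphic maps $F\colon\B_n\to M$ with $F(0)=p$ are bounded by a constant depending only on $p$ and not on $w$.

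The decisive observation is that such an $F$ automatically inherits the growth restrictions of the construction. Indeed, properties $3_\mu$ and $6_\mu$ in the proof of Theorem \ref{main} give $\phi(w,x^\mu_j)=(w,a^\mu_j)$ for all $w$, so each of the fixed points $a^\mu_j\in\partial(\mu+1)\B_n$ lies on $\varphi_w(X)$ for \emph{every} parameter $w$; hence the image of any $F\colon\B_n\to M$ avoids the whole net $\bigcup_{\mu,j}\{a^\mu_j\}$. Therefore condition $2_\mu$ (the concrete form of the Rosay--Rudin estimate, Lemma \ref{lemma4.3copy}) is available to control $F$ at every scale, and since the $a^\mu_j$ are independent of $w$, the resulting bound will be independent of $w$, yielding the conclusion for all $w$ simultaneously.

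To obtain the bound I would run the telescoping already used in the growth step of the proof of Theorem \ref{main}, but localized to the ball. Fix $p$ and let $\mu_0$ be the least integer with $\|p\|\le\frac{\mu_0+1}{2}$. Replacing $F$ by $\tilde F(z)=F(z/2)$ guarantees a bounded image, so $\tilde F(\B_n)\subset(k+2)\B_n$ for some $k>\mu_0$; assuming $|JF(0)|$ large (otherwise there is nothing to prove) one may arrange that all the rescaled maps $H_j(z)=\tilde F\big(\prod_{l=j+1}^{k}(1-\tfrac{\epsilon_l}{2})\,z\big)$ satisfy $|JH_j(0)|\ge\beta^n 2^{-n}|JF(0)|\ge1$, where $\beta=\prod_{l\ge2}(1-\tfrac{\epsilon_l}{2})$. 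Since $\|H_j(0)\|=\|p\|\le\frac{j+1}{2}$ and $H_j(\B_n)$ avoids the $a^j_l$, applying condition $2_j$ successively for $j=k,k-1,\dots,\mu_0+1$ confines the images into shrinking balls and gives $\tilde F(\beta\B_n)\subset(\mu_0+2)\B_n$. A Cauchy estimate for the holomorphic map $\tilde F\colon\beta\B_n\to(\mu_0+2)\B_n$ then bounds $|J\tilde F(0)|\le\big(\tfrac{\mu_0+2}{\beta}\big)^n$, whence $|JF(0)|\le\big(\tfrac{2(\mu_0+2)}{\beta}\big)^n=:C(p)$, as required.

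The routine parts are the reduction and the final Cauchy estimate; the point needing care is the telescoping. The main obstacle is that $F$ is defined only on $\B_n$, so one cannot dilate outward as in the entire case of Theorem \ref{main}; instead one must start from the a priori bounded image $\tilde F(\B_n)\subset(k+2)\B_n$ and telescope \emph{inward}, checking at each stage both that the image-confinement hypothesis of condition $2_j$ is met (this comes for free from the previous step) and that the Jacobian stays $\ge1$ (this is where the assumption that $|JF(0)|$ is large, rather than merely a fixed positive number as in the entire case, is genuinely used).
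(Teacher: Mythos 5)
Your proposal is correct and follows essentially the same route as the paper's own proof: both reduce $n$-Eisenman hyperbolicity to a uniform bound on $|JF(0)|$ for $F\in\hol(\B_n,M)$ with $F(0)=p$, observe that the image of $F$ automatically avoids the marked points $a^\mu_j\in\varphi_w(X)$, and then telescope rescaled maps through the growth condition $2_\mu$ before finishing with a Schwarz--Cauchy bound. The only differences are cosmetic (you fix the dilation factor $\tfrac12$ where the paper chooses an $\alpha$ adapted to $F$, and you phrase the argument as a direct uniform estimate rather than a contradiction), so no further comment is needed.
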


\begin{proof}
Suppose there exists a point $p\in \C^n\setminus \varphi_w (X)=M$
such that $ E_n^M(p,u)=0$ for the (unique up to a constant)
non-zero $u\in D^n_pM$. This means that
\begin{equation}\label{eishyp}
\inf_f \frac{1}{| Jf(0)|^2}\paren{\frac i2}^ndz\wedge d\bar z=0
\end{equation}
for some point $p$ where $f\in \hol (\B_n ,M)$ such that $f(0)=p$
and $f_*(T_0\B_n)=v$.

Let $\nu \in \N$ be a fixed number such that $p\in \frac{ \nu +
2}{2}\B_n$. By \eqref{eishyp} there is $F:\B_n\to M$ such that
$F(0)=p$ and $¦JF(0)¦$ is arbitrary large, for example
\begin{equation}\label{eishyp2}
¦JF(0)¦>\max (\frac{1}{\beta^n}
,(\nu +2)^n\beta^n)
\end{equation}
There is an $\alpha \in \C$, $0<¦\alpha ¦<1$ so that $¦JF(0)>\frac
1{\alpha^n\beta^n}$. Since $F(\alpha \overline{\B}_n)$ is compact
we find $k\in \N$ such that $F(\alpha \B_n)\subset (k+2)\B_n$.

Define $F_j(z)=F(\alpha \prod_{l=j+1}^k(1-\frac{\epsilon_j}2)z)$.
It holds: 

$¦JF_j(0)¦\geq \alpha^n\beta^n¦JF(0)¦>1$ for every $j$,
also for $\nu +1 \leq j \leq k$ we have $F_j(0)=p\in \frac{ \nu +
2}{2}\B_n$ and $F_j(\B_n)$ obviously does not meet the points
$a^j_l$, $l=1,2,\ldots ,k(j)$, (for $j$ large enough as in the
proof of the main theorem).

We conclude inductively by property $2_j$ $F_{j-1}(\B_n)\subset
(j+1)\B_n$ for $\nu +1 \leq  j\leq k$. This means in particular
$F_{\nu}(\B_n)\subset (\nu +2)\B_n$ which implies
$$¦JF_{\nu}(0)¦\leq (\nu +2)^n$$
and therefore
$$¦JF(0)¦\leq (\nu +2)^n\alpha^n\beta^n .$$
This contradicts \eqref{eishyp2}, thus $\C^n\setminus \varphi_w
(X)$ is Eisenman $n$-hyperbolic.
\end{proof}

That Eisenman hyperbolic manifold have a cancellation property was
used in \cite{Z}, Theorem 1.10., (for a simple proof see for example \cite{Bo})

\begin{proposition}\label{cancel}
Let $Y$ and $Z$ be $n$-Eisenman hyperbolic manifolds. Then any
biholomorphic map $\Psi =(\psi_1,\psi_2 ):Y\times \C^l \to Z\times
\C^l$ is of the form $\Psi (y,z)=(\psi_1(y),\psi_2(y,z))$, where
$\psi_1:Y\to Z$ is biholomorphic.
\end{proposition}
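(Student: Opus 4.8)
The plan is to exploit the \emph{degeneracy locus} of the Eisenman $n$-measure, which is a biholomorphic invariant, in order to recover the product structure intrinsically. Let $M$ denote either of the $n$-dimensional $n$-Eisenman hyperbolic manifolds $Y,Z$, write $\pi\colon M\times\C^l\to M$ for the projection onto the first factor, and for a decomposable $u=u_1\wedge\dots\wedge u_n\in D^n_{(p,q)}(M\times\C^l)$ consider its image $\pi_* u\in\bigwedge^n T_pM$. The key claim I would establish is
\[
E_n^{M\times\C^l}\big((p,q),u\big)=0 \iff \pi_* u = 0 .
\]
The implication $\pi_* u \neq 0 \Rightarrow E_n>0$ follows from the decreasing property of the Eisenman measure under the holomorphic map $\pi$: every competitor $F\in\hol(\B_n,M\times\C^l)$ for $E_n^{M\times\C^l}((p,q),u)$ yields the competitor $\pi\circ F$ for $E_n^{M}(p,\pi_* u)$, so that $E_n^{M}(p,\pi_* u)\le E_n^{M\times\C^l}((p,q),u)$, and the left-hand side is positive by hyperbolicity of $M$.

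For the converse $\pi_* u = 0 \Rightarrow E_n = 0$ — which I expect to be the main obstacle — I would first observe that $\pi_* u=0$ forces the $n$-plane $W=\operatorname{span}(u_1,\dots,u_n)$ to meet the vertical space $\{0\}\oplus\C^l$, so that $u$ can be rewritten as $u_1\wedge\dots\wedge u_{n-1}\wedge e$ with $e$ a nonzero pure $\C^l$-vector. Because the second factor is all of $\C^l$, I can dilate freely in the direction $e$: fixing a holomorphic map on a small ball in the first $n-1$ variables that realizes $u_1,\dots,u_{n-1}$ and appending the map $z_n\mapsto R\,z_n\,e$ in the $\C^l$-factor produces, for each $R$, a map $F_R\in\hol(\B_n,M\times\C^l)$ with $F_{R,*}(\partial_{z_1}\wedge\dots\wedge\partial_{z_n})$ a multiple of $u$ that grows linearly in $R$. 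Rescaling the domain then drives the defining infimum to $0$. The delicate point is only bookkeeping: one must keep the $\C^l$-factor honest, rather than passing to an arbitrary local chart, so that the unbounded dilation in the $e$-direction is legitimate.

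With the claim in hand the conclusion becomes structural. A tangent vector $\xi$ is vertical (i.e.\ $\pi_*\xi=0$) if and only if \emph{every} decomposable $n$-vector having $\xi$ as a factor lies in the degeneracy locus; indeed, if $\pi_*\xi\neq 0$ one completes $\pi_*\xi$ to a basis of $T_pM$ to build a non-degenerate $n$-vector through $\xi$. This characterizes the vertical distribution purely in terms of the Eisenman $n$-measure, hence it is preserved by any biholomorphism. Applying this to $\Psi$ and to $\Psi^{-1}$, the map $\Psi$ carries the vertical distribution of $Y\times\C^l$ onto that of $Z\times\C^l$; since these distributions are integrable with connected leaves, namely the fibres $\{y\}\times\C^l$ and $\{z\}\times\C^l$ respectively, $\Psi$ maps fibres biholomorphically onto fibres. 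Consequently $\psi_1(y,z)$ does not depend on $z$, the induced map $\psi_1\colon Y\to Z$, $\psi_1(y)=\operatorname{pr}_Z(\Psi(y,0))$, is holomorphic with holomorphic inverse $z\mapsto\operatorname{pr}_Y(\Psi^{-1}(z,0))$, and $\Psi(y,z)=(\psi_1(y),\psi_2(y,z))$ as claimed.
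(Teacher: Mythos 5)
Your argument is correct; note that the paper does not actually prove this proposition but cites it from Za\u\i denberg (Theorem 1.10 of that paper) and Borell's thesis, and your proof reconstructs exactly that standard argument: the degeneracy locus of $E_n^{M\times\C^l}$ consists precisely of the decomposable $n$-vectors whose span meets the vertical $\C^l$-directions (positivity via the decreasing property under the projection $\pi$, degeneracy via unbounded dilation in a vertical direction), this locus intrinsically characterizes the vertical distribution, and a biholomorphism must therefore carry fibres onto fibres. Both directions of your key claim and the leaf-preservation step are sound, so the proposal is a complete and correct proof along the same lines as the cited one.
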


With Proposition \ref{cancel} and Theorem \ref{mainadd} we get

\begin{theorem}\label{cross}
Let $X$ be a complex space, which can be embedded in $\C^n$ and
such that the group of holomorphic automorphisms
$\aut_{\rm{hol}}(X)$ is a Lie group. Then there exist, for
$k=n-1-\dim X$,  a family of holomorphic embeddings of $X\times
\C^l$ into $\C^n\times \C^l$ parameterized by $\C^k$, such that
for different parameters $w_1\neq w_2\in \C^k$ the embeddings
$\psi_{w_1},\psi_{w_2}:X\times \C^l \hookrightarrow \C^{n+l}$ are
non-equivalent (up to automorphisms).
\end{theorem}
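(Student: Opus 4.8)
The plan is to build the desired family as the product of the family produced in Theorem \ref{main} with the identity on $\C^l$, and then to reduce the non-equivalence of the product embeddings to that of the factor embeddings by combining the Eisenman-hyperbolicity of the complements (Theorem \ref{mainadd}) with the cancellation property (Proposition \ref{cancel}).

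First I would take the family $\varphi_w : X \hookrightarrow \C^n$, $w \in \C^k$, $k = n-1-\dim X$, furnished by Theorem \ref{main} and set
\[
\psi_w(x,v) = (\varphi_w(x), v), \qquad (x,v) \in X \times \C^l ,
\]
so that $\psi_w : X \times \C^l \hookrightarrow \C^n \times \C^l = \C^{n+l}$. Since $\varphi_w$ is a proper holomorphic embedding and the $\C^l$ factor is untouched, each $\psi_w$ is again a proper holomorphic embedding; moreover the associated map $\C^k \times (X \times \C^l) \to \C^k \times (\C^n \times \C^l)$ is a proper holomorphic embedding, inherited from the corresponding property of the family in Theorem \ref{main}, so we obtain a holomorphic family in the strong sense discussed in Section \ref{hauptsatz}.

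Next, suppose toward a contradiction that $\psi_{w_1}$ and $\psi_{w_2}$ are equivalent for some $w_1 \neq w_2$. By Definition \ref{def-eq-emb} there are $\Phi \in \Aut(\C^{n+l})$ and $\Theta \in \Aut(X \times \C^l)$ with $\Phi \circ \psi_{w_1} = \psi_{w_2} \circ \Theta$; in particular $\Phi$ carries the image $\varphi_{w_1}(X) \times \C^l$ onto $\varphi_{w_2}(X) \times \C^l$, and hence restricts to a biholomorphism of the complements
\[
(\C^n \setminus \varphi_{w_1}(X)) \times \C^l \longrightarrow (\C^n \setminus \varphi_{w_2}(X)) \times \C^l .
\]
By Theorem \ref{mainadd} both factors $\C^n \setminus \varphi_{w_i}(X)$ are $n$-Eisenman hyperbolic manifolds, so Proposition \ref{cancel} forces this restriction to split as $(y,v) \mapsto (\psi_1(y), \psi_2(y,v))$ with $\psi_1 : \C^n \setminus \varphi_{w_1}(X) \to \C^n \setminus \varphi_{w_2}(X)$ a biholomorphism.

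The hard part will be to upgrade $\psi_1$ from a biholomorphism of the open complements to an honest automorphism of $\C^n$. Here I would use that $\varphi_w(X)$ is a closed analytic subvariety of codimension $n - \dim X = k+1 \geq 2$ (the case $k=0$ being harmless, as the conclusion is then vacuous). By Hartogs' extension theorem each component of $\psi_1$, and likewise of $\psi_1^{-1}$, extends holomorphically across the variety to all of $\C^n$; the two extensions are mutually inverse on the dense complement, hence on all of $\C^n$ by the identity theorem, so $\psi_1$ extends to some $F \in \Aut(\C^n)$. Being bijective, $F$ necessarily maps $\varphi_{w_1}(X)$ onto $\varphi_{w_2}(X)$, which exhibits $\varphi_{w_1}$ and $\varphi_{w_2}$ as equivalent embeddings of $X$ into $\C^n$, contradicting Theorem \ref{main} and completing the proof.
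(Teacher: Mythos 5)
Your proposal is correct and follows essentially the same route as the paper: cross the family from Theorem \ref{main} with the identity on $\C^l$, pass to complements, and apply Eisenman $n$-hyperbolicity (Theorem \ref{mainadd}) together with the cancellation property (Proposition \ref{cancel}) to descend to a biholomorphism of $\C^n\setminus\varphi_{w_1}(X)$ onto $\C^n\setminus\varphi_{w_2}(X)$. The only difference is that you explicitly justify, via Hartogs extension across the codimension $k+1\ge 2$ subvariety, why this biholomorphism of complements comes from an automorphism of $\C^n$ --- a step the paper's proof asserts without comment --- so your write-up is, if anything, slightly more complete.
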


\begin{proof}
Take $\varphi$ from Theorem \ref{main} and consider
$$\Psi :\C^k\times X\times \C^l \to \C^k\times \C^n\times \C^l$$
defined by
$$(w,x,y)\mapsto (w,\varphi (w,x),y)=(w,\psi(w,x,y)) .$$
Assume that the embeddings $\psi_{w_1}$ and $\psi_{w_2}$, where
$w_1\neq w_2$ are equivalent. This means that there exists an
automorphism $\alpha \in \aut_{\rm{hol}}(\C^{n+l})$ such that
$\alpha(\varphi_{w_1} (X)\times \C^l)=\varphi_{w_2} (X)\times
\C^l$ and therefore the same for the complements
$\alpha((\C^n\setminus \varphi_{w_1} (X))\times
\C^l)=(\C^n\setminus \varphi_{w_2} (X))\times \C^l$.

Now by Proposition \ref{cancel} there exists $\alpha_1 \in
\aut_{\rm{hol}}(\C^n)$ such that $\alpha_1(\C^n\setminus
\varphi_{w_1} (X))=\C^n\setminus \varphi_{w_2} (X)$. Thus
$\alpha_1(\varphi_{w_1} (X))=\varphi_{w_2} (X)$, which contradicts
the choice of $\varphi$.
\end{proof}

A special case which is worth to state separately is $X=\C$.

\begin{corollary} \label{main corollary}
There exist, for $k=n-l-1$,  a family of holomorphic embeddings of
$\C^l$ into $\C^n$ parameterized by $\C^k$, such that for
different parameters $w_1\neq w_2\in \C^k$ the embeddings
$\psi_{w_1},\psi_{w_2}:\C^l \hookrightarrow \C^{n}$ are
non-equivalent.
\end{corollary}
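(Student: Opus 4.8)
The plan is to obtain the corollary as a direct specialization of Theorem \ref{cross}, the only subtleties being the choice of factor and the bookkeeping of dimensions. The key observation is that among the complex Euclidean spaces $\C^d$, the automorphism group $\aut_{\rm{hol}}(\C^d)$ is a Lie group only for $d=1$: indeed $\aut_{\rm{hol}}(\C)$ consists exactly of the affine maps $z\mapsto az+b$ with $a\in\C^*$, $b\in\C$, which form the two-dimensional complex Lie group $\C^*\ltimes\C$, whereas for $d\ge 2$ the group is infinite-dimensional. This forces the splitting $\C^l=\C\times\C^{l-1}$, with $X=\C$ playing the role of the hyperbolic factor and $\C^{l-1}$ the role of the cancellable factor in Theorem \ref{cross}.

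Concretely, first I would embed $\C$ linearly (hence properly and holomorphically) into $\C^{n-l+1}$, which is legitimate since $n-l+1\ge 1$. Then I would apply Theorem \ref{cross} to $X=\C$ using this embedding, so that the ambient dimension ``$n$'' of that theorem is $n-l+1$ and $\dim X=1$, and with the Euclidean factor ``$\C^l$'' of that theorem taken to be $\C^{l-1}$. The theorem then yields a family, parametrized by $\C^k$ with $k=(n-l+1)-1-\dim X=(n-l+1)-2=n-l-1$, of holomorphic embeddings of $\C\times\C^{l-1}$ into $\C^{(n-l+1)+(l-1)}=\C^n$ that are pairwise non-equivalent for distinct parameters. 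One checks painlessly that the dimension constraints underlying Theorem \ref{cross} hold, namely $k=n-l-1<n-l+1-1$ and $\dim X+k=n-l<n-l+1$.

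Finally I would identify $\C\times\C^{l-1}$ with $\C^l$; since $\Aut(\C\times\C^{l-1})=\Aut(\C^l)$ and the target $\C^n$ is unchanged, the notion of equivalence of Definition \ref{def-eq-emb} is preserved under this identification, and the embeddings become the desired pairwise non-equivalent family $\psi_{w_1},\psi_{w_2}:\C^l\hookrightarrow\C^n$ parametrized by $\C^{n-l-1}$. I expect essentially no analytic obstacle here, as all of the hard work---the growth restrictions, the osculation marking, and the Eisenman-hyperbolicity cancellation (Addition \ref{mainadd} together with Proposition \ref{cancel})---is already contained in Theorem \ref{cross}. The only point worth flagging is the degenerate case $l=1$, where $\C^{l-1}$ reduces to a point: here Theorem \ref{cross} collapses to Theorem \ref{main} applied to $X=\C$ (with $k=n-2$), which already supplies the claim without any cancellation.
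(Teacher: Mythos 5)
Your proposal is correct and follows exactly the paper's intended derivation: the corollary is stated as the special case $X=\C$ of Theorem \ref{cross}, and your dimension bookkeeping ($n'=n-l+1$, cancellable factor $\C^{l-1}$, hence $k=n'-2=n-l-1$) matches what the paper implicitly does. The remarks on why $X$ must be the one-dimensional factor and on the degenerate case $l=1$ are accurate and add nothing problematic.
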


We end this section with a little trick showing that one can even have families of pairwise non-equivalent embeddings containing the standard 
embedding as a member of the family. Note that the embeddings constructed in the proof of Theorem
\ref{main} are not containing the standard embedding since the complement of each embedding is
$n$-Eisenman hyperbolic. 

\begin{proposition} \label{standard}
For each $0<l<n-1$ there is a holomorphic   family of holomorphic embeddings of
$\C^l$ into $\C^n$ parameterized by $\C$, such that for
different parameters $w_1\neq w_2\in \C$ the embeddings
$\psi_{w_1},\psi_{w_2}:\C^l \hookrightarrow \C^{n}$ are
non-equivalent. Moreover for the embedding $\psi_0$ is equivalent to the standard embedding.
\end{proposition}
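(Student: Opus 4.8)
The plan is to reduce everything to the case $X=\C$ of Theorem \ref{main}, where Theorem \ref{mainadd} is available, and then to manufacture the standard member by a rescaling (``zooming in at a point''). Put $m=n-l+1$; since $0<l<n-1$ we have $m\geq 3$, $n-l\geq 2$, and $k:=m-2=n-l-1\geq 1$. First I would produce a one-parameter family $\chi_t\colon\C\hookrightarrow\C^m$, $t\in\C$, that is pairwise non-equivalent, has $m$-Eisenman hyperbolic complements for $t\neq0$, and has $\chi_0$ equivalent to the standard embedding. To do this, restrict the $\C^{m-2}$-parameter family of Theorem \ref{main} (applied to $X=\C$, whose automorphism group $\aut_{\mathrm{hol}}(\C)$ is a Lie group) to an injective affine line $\C\to\C^{m-2}$, obtaining $\varphi_t\colon\C\hookrightarrow\C^m$ which are pairwise non-equivalent and have $m$-Eisenman hyperbolic complements by Theorem \ref{mainadd}. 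After composing with the holomorphic family of translations $z\mapsto z-\varphi_t(0)$ (changing neither the equivalence class nor the complement up to biholomorphism) I may assume $\varphi_t(0)=0$ for all $t$. Now set $\chi_t(x)=\frac{1}{t}\varphi_t(tx)$ for $t\neq0$. Since $G(t,x):=\varphi_t(tx)$ is holomorphic and vanishes on $\{t=0\}$, it factors as $G=t\,H$ with $H$ holomorphic, so $\chi_t=H(t,\cdot)$ extends holomorphically across $t=0$; using $\varphi_t(0)\equiv 0$ one computes $\chi_0(x)=H(0,x)=d(\varphi_0)_0\,x$, a linear injective map, hence a proper embedding equivalent to the standard one. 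For $t\neq0$, $\chi_t$ is the composition of the scaling $x\mapsto tx$ on $\C$, of $\varphi_t$, and of $z\mapsto z/t$ on $\C^m$, so it is a proper embedding equivalent to $\varphi_t$; thus the $\chi_t$ with $t\neq0$ are pairwise non-equivalent with $m$-Eisenman hyperbolic complements, while $\chi_0$ is standard and its complement contains a $\C$-factor, hence fails to be $m$-Eisenman hyperbolic. This already settles the case $l=1$ (where $m=n$).

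Next I would cross with $\C^{l-1}$: define $\psi_t\colon\C^l=\C\times\C^{l-1}\to\C^n=\C^m\times\C^{l-1}$ by $\psi_t(x,y)=(\chi_t(x),y)$. Each $\psi_t$ is a proper holomorphic embedding and $(t,x,y)\mapsto\psi_t(x,y)$ is holomorphic, so this is a holomorphic family of embeddings; since $\chi_0$ is equivalent to the standard embedding of $\C$ into $\C^m$, the member $\psi_0$ is equivalent to the standard embedding of $\C^l$ into $\C^n$.

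It remains to establish pairwise non-equivalence. For $t_1,t_2\neq0$ I would argue as in Theorem \ref{cross}: an equivalence $\psi_{t_1}\sim\psi_{t_2}$ produces $\alpha\in\Aut(\C^n)$ carrying $Y_{t_1}\times\C^{l-1}$ biholomorphically onto $Y_{t_2}\times\C^{l-1}$, where $Y_t:=\C^m\setminus\chi_t(\C)$ is $m$-Eisenman hyperbolic; Proposition \ref{cancel} then yields a biholomorphism $Y_{t_1}\to Y_{t_2}$, which extends across the curves $\chi_{t_i}(\C)$ (of codimension $m-1\geq 2$) to an element of $\Aut(\C^m)$ mapping $\chi_{t_1}(\C)$ onto $\chi_{t_2}(\C)$, i.e. $\chi_{t_1}\sim\chi_{t_2}$, contradicting the first paragraph. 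The genuinely new case is $\psi_0$ versus $\psi_t$ with $t\neq0$, where Proposition \ref{cancel} is unavailable because the complement of the standard embedding is not Eisenman hyperbolic. An equivalence would give a biholomorphism $\Phi\colon\C^l\times(\C^{n-l}\setminus\{0\})\to Y_t\times\C^{l-1}$ (the complements of an $l$-plane and of $\psi_t(\C^l)$). I would rule this out by comparing Eisenman $m$-volumes directly. On $Y_t\times\C^{l-1}$ the vertical decomposable $m$-vector spanning $T_yY_t$ has $E_m>0$, since pushing a competing disc forward by the projection to $Y_t$ lands in the $m$-Eisenman hyperbolic manifold $Y_t$. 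On $\C^l\times(\C^{n-l}\setminus\{0\})$, however, $E_m\equiv 0$: because $\dim(\C^{n-l}\setminus\{0\})=n-l=m-1<m$, every $m$-plane contains a nonzero vector tangent to the $\C^l$-factor, which can be filled with arbitrarily large Jacobian while keeping the second component near a nonzero value, forcing the infimum to $0$. As $E_m$ is a biholomorphic invariant, no such $\Phi$ exists, so $\psi_0\not\sim\psi_t$, and $\{\psi_t\}_{t\in\C}$ is the desired family.

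The main obstacle is exactly this last step. For $l=1$ the complement of each non-standard member is Eisenman hyperbolic and the distinction from the standard member is immediate; for $l\geq 2$ the complement of the non-standard member is only a product with an Eisenman hyperbolic factor rather than hyperbolic itself, and Proposition \ref{cancel} does not apply to the standard side. The decisive point is the dimension gap $m-1<m$, which makes the vertical direction on $Y_t\times\C^{l-1}$ detectable by $E_m$ while collapsing $E_m$ entirely on the complement of the $l$-plane; verifying these two volume computations is where the real work lies.
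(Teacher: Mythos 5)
Your proposal is correct and follows essentially the same route as the paper: restrict Theorem \ref{main} (for $X=\C$, target $\C^{n-l+1}$) to a line of parameters, normalize so the embeddings pass through the origin, rescale by $\frac1w\phi(w,w\cdot)$ to make the $w=0$ member linear, cross with $\C^{l-1}$, and separate the standard member from the rest via Eisenman $(n-l+1)$-hyperbolicity of the complements (Theorem \ref{mainadd}); the only differences are the immaterial swap of the order of rescaling and crossing, and that your explicit $E_m$-computation on $\C^l\times(\C^{n-l}\setminus\{0\})$ versus $Y_t\times\C^{l-1}$ is a fleshed-out version of the paper's one-line ``rank $n-l+1$ map into the complement'' remark.
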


\begin{proof}
Take a family $\Phi : \C \times \C \to \C \times \C^{n-l+1}$  $\Psi_0 (w, z) = (w, \phi (w,z))$ as constructed in Theorem \ref{main} and cross
it with $\C^{l-1}$ as in Theorem \ref{cross}  to get a family $\Psi_1 : \C \times \C^l \to \C \times \C^n$
$\Psi_1 (w, x) = (w, \phi_1 (w,x))$. By using a translation we can assume that $\phi_1 (w, 0) = 0 \ \forall w\in \C$.

Now define the family $\Psi : \C \times \C^l \to \C \times \C^n$ by
$$ \Psi (w, x) = (w, {1 \over w} \phi_1 (w, w x)) =: (w, \psi (w,x))$$ for $w\ne 0$ and by its obvious limit $ x \mapsto \phi_1^\prime  (0,0) x$ for $w=0$. Thus for $w=0$ we have the standard embedding in the family. All other members $\psi_w$ of
the family are by definition equivalent to the embedding $\phi_w$ and therefore pairwise non-equivalent. No member in the family except $\psi_0$ is equivalent to the standard embedding since
otherwise there would exist a holomorphic map of rank $n-l+1$ into the complement of $\phi_{w} (\C)$ which contradicts  the Eisenman $n-l+1$-hyperbolicity (Theorem \ref{mainadd}).
\end{proof}

\section{Families of holomorphic $\C^*$-actions on affine space}\label{wirkung}

In this section we employ the method from \cite{DK1} and \cite{DK} to construct (non-linearizable) $\C^*$-actions on
affine spaces out of embeddings $\C^l\hookrightarrow \C^n$. We will not give all proofs in detail. The
important point we want to check here is that if the embeddings are holomorphically parametrized, then
the resulting $\C^*$-actions depend holomorphically on the parameter.

Let's go through the method:

For an embedding $\varphi: \C^l \to \C^n$ take generators of the ideal $I_{\varphi (\C^l)} < \hol (\C^n)$ of the image manifold, say 
$f_1, \ldots, f_N \in \hol ({\C^n)}$ (in this case $N= n-l$ would be sufficient, since $\C^l$ is always a
complete intersection in $\C^n$ by results of Forster and Ramspott \cite{FoRa}, but this is not important for the construction) and consider the manifold

\begin{multline*}
M:= \{ (z_1, \ldots, z_n, u_1,\ldots u_N, v) \in \C^{n+N+1} : \\
f_i (z_1, \ldots, z_n) = u_i \ v      \quad \forall \ i=1, \ldots, N \}
\end{multline*}

which in \cite{DK1} is called Rees space. This notion was introduced there by the authors since they were not aware of the fact that this is a well-known construction, called affine modification, going back to Oscar Zariski. Geometrically the manifold $M$ results from $\C^{n+1}_{z, v}$ by blowing up along the center $\mathcal{C} = \varphi (\C^l) \times 0_v$ and deleting the proper transform of the divisor
$\mathcal{D} = \{ v = 0\}$. Since our center is not algebraic but analytic,  the process usually is called
pseudo-affine modification.

Lets denote the constructed manifold $M$ by $Mod (\C^{n+1}, \mathcal{D}, \mathcal{C})  = 
Mod( \C^{n+1}_{z, v}, \{v = 0\}, \varphi (\C^l)\times \{v=0\})$. It's clear from the geometric description that
the resulting manifold does not depend on the choice of generators for the ideal $I_\mathcal{C}$ of the center. 

The important fact about the above modifications is that 

\noindent
$Mod( \C^{n+1}_{z, v}, \{v = 0\}, \varphi (\C^l)\times \{v=0\}) \times \C^l$ is biholomorphic to $\C^{n+l+1}$
$\cong Mod (\C^{n+l+1}_{z, u, v}, \{ v=0\}, \varphi(\C^l) \times 0_u \times 0_v)$. The later biholomorphism
comes from the fact that there is an automorphism  of $\C^{n+l+1}$ leaving the divisor $\{ v= 0\}$ invariant
and straightening the center $\varphi(\C^l) \times 0_v$ inside the divisor (see Lemma 2.5. in \cite{DK1}).

Lets check that this important fact depends holomorphically on the parameter.

\begin{lemma}\label{straight}
Let $\Phi_1 : \C^k \times X \hookrightarrow \C^k \times \C^n$, $\Phi_1 (w, x) = (w, \varphi_1 (w, x))$ and
$\Phi_2 : \C^k \times X \hookrightarrow \C^k \times \C^m$, $\Phi_2 (w, x) = (w, \varphi_2 (w, x))$ be two holomorphic families of proper holomorphic embeddings of a complex space $X$ into $\C^n$ resp. $\C^m$ parametrized by $\C^k$. Then there is an automorphism $\alpha$ of $\C^{n+m}$ parametrized by $\C^k$, i.e. $\alpha \in \aut_{hol} (\C^k_w \times \C^{n+m}_z)$ with $\alpha (w, z) = (w, \tilde \alpha (w, z))$, such that $\alpha \circ (\Phi_1 \times 0_m) = 0_n \times \Phi_2$.
\end{lemma}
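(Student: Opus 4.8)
The plan is to build $\alpha$ as a composition of two fiber-preserving shears, exactly as in the (non-parametrized) standard argument that two embeddings of a space into complex Euclidean space become equivalent after adjoining the complementary coordinates, taking care that every extension used depends holomorphically on $w$.

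First I would record that, because $\Phi_1$ is a \emph{proper} holomorphic embedding, its image $Z_1 := \Phi_1(\C^k\times X)$ is a closed complex subspace of $\C^k\times\C^n = \C^{k+n}$ and $\Phi_1 : \C^k\times X \to Z_1$ is a biholomorphism; likewise $Z_2 := \Phi_2(\C^k\times X)$ is closed in $\C^{k+m}$. On $Z_1$ the assignment $(w,\varphi_1(w,x))\mapsto \varphi_2(w,x)$ is well defined and holomorphic (it is $\varphi_2\circ \Phi_1^{-1}$), so by the Cartan extension theorem for closed complex subspaces of affine space there is a holomorphic map $G:\C^{k+n}\to \C^m$ with $G(w,\varphi_1(w,x)) = \varphi_2(w,x)$ for all $(w,x)$. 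Symmetrically, extending $(w,\varphi_2(w,x))\mapsto \varphi_1(w,x)$ off $Z_2$ produces a holomorphic $H:\C^{k+m}\to\C^n$ with $H(w,\varphi_2(w,x)) = \varphi_1(w,x)$.

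Next I would define the two shears on $\C^k_w\times\C^n_z\times\C^m_u$ by
$$\sigma_1(w,z,u) = (w,z,u+G(w,z)),\qquad \sigma_2(w,z,u)=(w,z-H(w,u),u).$$
Each is a fiber-preserving unipotent automorphism, hence lies in $\Aut^k_{hol}(\C^{n+m})$, and therefore so does $\alpha := \sigma_2\circ\sigma_1$. A direct check gives $\sigma_1(w,\varphi_1(w,x),0) = (w,\varphi_1(w,x),\varphi_2(w,x))$ and then $\sigma_2(w,\varphi_1(w,x),\varphi_2(w,x)) = (w,0,\varphi_2(w,x))$, that is, $\alpha\circ(\Phi_1\times 0_m) = 0_n\times\Phi_2$, as required.

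The only real content is the extension step, and the point I would stress is that holomorphic dependence on the parameter comes for free: one never extends a family of functions and then verifies joint holomorphy, but rather extends the single holomorphic function $\varphi_2\circ\Phi_1^{-1}$ defined on the closed subspace $Z_1\subset\C^{k+n}$, which already carries $w$ as part of its domain. The hypothesis doing the work is properness of $\Phi_1,\Phi_2$: it is precisely what guarantees that $Z_1,Z_2$ are \emph{closed}, without which the extension theorem does not apply — this is the subtle point flagged earlier in connection with the pseudo-affine modification. For singular $X$ the argument goes through verbatim, since the extension theorem holds for holomorphic functions on closed complex subspaces (via Theorem~B applied to the ideal sheaf), not merely on submanifolds.
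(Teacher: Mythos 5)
Your proof is correct and follows essentially the same route as the paper: extend $\varphi_2\circ\Phi_1^{-1}$ and $\varphi_1\circ\Phi_2^{-1}$ off the closed images via Cartan's Theorem B, then compose the two resulting parametrized shears (your $\sigma_2\circ\sigma_1$ is exactly the paper's $\alpha_2^{-1}\circ\alpha_1$). Your remarks on why properness and the parametrized formulation make the extension step automatic match the paper's (more terse) reasoning.
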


\begin{proof}
By an application of Theorem B the holomorphic map $\varphi_1 : \C^k \times X $ to $ \C^n$ extends to a holomorphic map $\mu_1$ from $\C^k \times \C^m \supset \Phi_2 (\C^k \times X)$ to $\C^n$ (so $\mu_1\circ \varphi_2 = \varphi_1$). Likewise there is a holomorphic map $\mu_2 : \C^k\times \C^n \to \C^m$
with $\mu_2 \circ \varphi_1 = \varphi_2$. Define the parametrized automorphisms $\alpha_1, \alpha_2$
of $\C^k\times \C^n\times \C^m$ by $\alpha_1 (w, z, y) = (w, z, y+ \mu_2 (w, z))$  and 
$\alpha_2 (w, z, y) = (w, z + \mu_1 (w, y), y)$. Now $\alpha = \alpha_2^{-1} \circ \alpha_1$ is the desired 
automorphism.
\end{proof}

\begin{lemma} 
\label{fam}
Let $\Phi : \C^k \times \C^l \hookrightarrow \C^k \times \C^n$ $\Phi (w, \theta) = (w, \varphi (w, \theta))$ be a holomorphic family of proper holomorphic embeddings of $\C^l$ into $\C^n$ parametrized by $\C^k$.

Then $Mod( \C^{k+n+1}_{w, z, v}, \{ v=0\}, \Phi (\C^k\times \C^l )\times \{ v=0 \}) \times \C^l \cong \C^{k+n+l+1}$. Moreover there is a  biholomorphism such that the restriction to each fixed parameter  $w\in \C^k$ is a biholomorphism from $ Mod( \C^{n+1}_{ z, v}, \{v=0\}, \Phi(\{w\}\times \C^l)\times \{v=0\}) \times \C^l \cong \C^{n+l+1}$.
\end{lemma}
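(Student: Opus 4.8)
The plan is to parametrize the non-parametrized ``important fact'' recalled above, namely that $Mod(\C^{n+1}_{z,v},\{v=0\},\varphi(\C^l)\times\{v=0\})\times\C^l\cong\C^{n+l+1}$, by checking that each space and each biholomorphism occurring in its proof can be taken to commute with the projection onto the parameter $\C^k$; the only genuinely embedding-dependent step is handled by Lemma \ref{straight}. First I would fix, using Cartan's Theorem A, global generators $f_1,\dots,f_N\in\hol(\C^k\times\C^n)$ of the ideal of the closed submanifold $\Phi(\C^k\times\C^l)$, so that the parametrized Rees space is described explicitly as
\[
Mod(\C^{k+n+1}_{w,z,v},\{v=0\},\Phi(\C^k\times\C^l)\times\{v=0\})=\{(w,z,v,s)\in\C^{k+n+1+N}:f_i(w,z)=s_i v\}.
\]
This visibly fibers holomorphically over $\C^k$, the fiber over $w$ being identified (by generator-independence of the modification) with $Mod(\C^{n+1}_{z,v},\{v=0\},\varphi_w(\C^l)\times\{v=0\})$.

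Next I would carry out the two identifications in the parametrized category. For the ``product'' step, crossing the above space with an extra factor $\C^l_t$ and inserting the redundant coordinate $u:=tv$ gives a parametrized biholomorphism onto
\[
Mod(\C^{k+n+l+1}_{w,z,u,v},\{v=0\},\Phi(\C^k\times\C^l)\times 0_u\times 0_v)=\{(w,z,u,v,s,t):f_i(w,z)=s_iv,\ u_j=t_jv\};
\]
this map is purely formal, preserves $w$, and does not see the shape of $\Phi$. For the ``straightening'' step I apply Lemma \ref{straight} with $\Phi_1=\Phi$ and $\Phi_2$ the trivial family of identity embeddings $\C^l\hookrightarrow\C^l$, obtaining $\alpha\in\aut_{hol}^k(\C^{n+l})$ with $\alpha\circ(\Phi\times 0_l)=0_n\times\mathrm{id}$. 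Extending it by the identity in the $v$-direction to $\hat\alpha\in\aut_{hol}^k(\C^{n+l+1})$ leaves $\{v=0\}$ invariant and carries the center $\Phi(\C^k\times\C^l)\times 0_u\times 0_v$ onto the linear center $\C^k\times 0_z\times\C^l_u\times 0_v$. By functoriality of the modification under a parametrized automorphism preserving both divisor and center, this produces a parametrized biholomorphism onto $Mod(\C^{k+n+l+1},\{v=0\},\C^k\times 0_z\times\C^l_u\times 0_v)$, and this last, standard modification is identified with $\C^{k+n+l+1}$ by the explicit, $w$-preserving map $(w,s,u,v)\mapsto(w,sv,u,v,s)$.

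Composing the three parametrized biholomorphisms proves the isomorphism, and since each commutes with the projection to $\C^k$, restricting to a fixed $w$ recovers precisely the non-parametrized statement for $\varphi_w$, which gives the ``moreover'' clause. The only place where the particular embedding enters is the straightening, which is exactly the content of Lemma \ref{straight}; thus the main point to verify carefully is the naturality of the pseudo-affine modification with respect to $\hat\alpha$, together with the claim that forming the modification genuinely commutes with passing to the fiber over $w$. I expect this bookkeeping, rather than any new geometric input, to be the main obstacle, and it is resolved by the generator-independence of the modification and by its geometric description as a blow-up followed by deletion of the proper transform of the divisor, both of which respect the product structure over $\C^k$.
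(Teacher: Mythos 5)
Your proposal is correct and follows essentially the same route as the paper: apply Lemma \ref{straight} to $\Phi$ and the trivial family, extend the resulting parametrized automorphism trivially in the $v$-direction, use the invariance of the pseudo-affine modification under an ambient automorphism preserving the divisor and carrying the center to a straight one, and identify the straight modification explicitly with affine space, with the fiberwise statement following because every map commutes with the projection to $\C^k$. The only differences are cosmetic: you make the Rees-space equations and the product/insertion step $u=tv$ explicit where the paper treats them as definitional, which is harmless.
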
 

\begin{proof}
Apply Lemma \ref{straight} to the families $\Phi_1 = \Phi$ and $\Phi_2$ the trivial family
$\Phi_2 : \C^k \times \C^l \hookrightarrow \C^k\times \C^l$ $\Phi_2 (w, \theta) = (w, \theta)$. Let $\alpha
\in \aut_{hol} (\C^k \times \C^n \times \C^l)$ be the resulting parametrized automorphism which we extend to $\C^{k+n+l+1}$ by letting it act trivial on the last coordinate $v$. Then by definition
$Mod( \C^{k+n+1}_{w, z, v}, \{ v=0\}, \Phi (\C^k\times \C^l )\times \{ v=0 \}) \times \C^l = 
Mod( \C^{k+n+l+1}_{w, z,\theta,  v}, \{ v=0\}, \Phi (\C^k\times \C^l )\times \{ v=0 \} \times 0_l)$ and applying (the extended) $\alpha$ we get that the later is biholomorphic to
$Mod( \C^{k+n+l+1}_{w, z,\theta,  v}, \{ v=0\}, \C^k_w \times 0_n \times \C^l_\theta \times \{ v=0 \} )$. The last manifold is obviously biholomorphic to $\C^{k+n+l+1}$ since blowing up along a straight
center and deleting the proper transform of a straight divisor does not change the affine space.
The above constructed biholomorphism restricts to each fixed parameter as desired since $\alpha$ is
a parametrized automorphism. This can be also seen by writing down concrete formulas for the modifications using generators $f_1 (w,z), \ldots, f_N(w, z)$ of the ideal $I_{\phi(\C^k\times\C^l)}$ in
$\hol (\C^{k+n})$  and remarking that for each fixed $w \in \C^k$ the functions $f_1 (w,\cdot), \ldots, f_N(w, \cdot )$ generate the ideal $I_{\Phi_w (\C^l)}$.
\end{proof}

Now we describe the group actions:

\noindent
Let  $f_1 (w,z), \ldots, f_N(w, z)$ be generators of the ideal $I_{\phi(\C^k\times\C^l)}$ in
$\hol (\C^{k+n})$ and consider $Mod( \C^{k+n+1}_{w, z, v}, \{ v=0\}, \Phi (\C^k\times \C^l )\times \{ v=0 \}) \times \C^l \cong \C^{k+n+l+1}$ as the affine manifold given by equations:

\begin{multline*} 
\{ (w, z, v, u) \in \C^k\times \C^n\times \C\times \C^N :
f_i (w, z) = u_i \ v      \quad \forall \ i=1, \ldots, N \} \times \C^l_x
\end{multline*} 

On it we consider the action of $\C^*_\nu$ given by the restriction of the following linear action on the ambient space:

\begin{multline}
\C^* \times \C^k \times \C^n \times \C \times \C^N \times \C^l \to \C^k \times \C^n \times \C \times \C^N \times \C^l \\(\nu, (w, z, v, u, x)) \mapsto (w, z, \nu^2 v, \nu^{-2} u_1, \ldots ,\nu^{-2} u_N, \nu x_1, \ldots, \nu x_l)
\end{multline}

This gives by Lemma \ref{fam} a holomorphic family of $\C^*$-actions on $\C^{n+l+1}$ parametrized by $\C^k$, i.e., an action
$\C^* \times \C^k \times \C^{n+l+1} \to \C^k \times \C^{n+l+1}$ of the form $(\nu (w, z)) \mapsto (w, \nu (w, z))$. Calculating (as in \cite{DK}) the Luna-stratification of the categorical quotient $\C^{n+l+1}/  \hspace{-3 pt} / \C^*$ for the $\C^*$-action  for fixed $w$, in particular the inclusion of the fixed point stratum in the $\Z/2\Z$-isotropy stratum one sees that this inclusion is biholomorphic to $\Phi_w (\C^l) \subset \C^n$. Thus if
for different parameters $w_1 \ne w_2$ there were an equivariant automorphism $\alpha \in \aut_{hol} (\C^n)$ the induced isomorphism of the categorical quotients would map the Luna-stratifications onto each other. Therefore the restriction of that induced isomorphism to the $\Z/2\Z$-isotropy stratum would
give an automorphism $\beta$ of $\C^n$ with $\beta (\Phi_{w_1} (\C^l)) = \Phi_{w_2} (\C^l)$. This shows
that pairwise non-equivalent embeddings lead to non-equivalent $\C^*$-actions.
Combining this with Theorem \ref{main}  (embeddings of $\C$ into $\C^n$ parametrized by $\C^{n-2}$ for $n\ge 3$) we have proved Theorem \ref{action1} from the introduction. In the same way Theorem \ref{action2} from the introduction follows from Proposition \ref{standard}. It's an easy exercise that a straight embedding leads to a linear action.

\section{Concluding remarks}\label{concluding}

Carefully examining the proof of Theorem \ref{main} and the proofs of the technical  results from section \ref{technik} one sees that there is no place where we use the fact that the parameter space is affine space $\C^k$. What we use of the parameter space is a graduation (in the proof of the parametrized Anders\'en-Lempert-theorem) so say an affine algebraic 
variety would do the job. Most important is the dimension condition $\dim X + \dim (\rm {parameter space}) < n$ (here dimension is always dimension of the smooth part). So in fact we construct  families
parametrized by any space of the right dimension. The authors wonder whether there is any nice structure on the set of all equivalence classes of proper holomorphic embeddings  say of $\C^l$ into $\C^n$ and how "big" is this set?

Our construction of embeddings used two techniques, the growth restrictions which worked well for embedding manifolds with a "small" automorphism group, namely a Lie group, and the Eisenman hyperbolicity for crossing the situation with affine space. Combining this we got families of embeddings of affine spaces.

What about the number of equivalence classes of proper holomorphic embeddings of other manifolds with infinite-dimensional automorphism groups, e.g. manifolds with the density property, into affine spaces? A concrete question in this direction would be:

 {\em How many
embeddings of a Danielevski surface $f(X) = u v$ into affine spaces do there exist?}

 It's known that there exist  at least two algebraic embeddings of the Danielevski surface $p(y) = u^n v$ (degree of $p$ is at least 2) into $\C^3$ which are algebraically non-equivalent, i.e. there is no algebraic automorphism of $\C^3$ mapping one image onto the other \cite{FrMo}. In the same paper Freudenburg and Moser
  show that the constructed embeddings are holomorphically isomorphic using the linearization results of Heinzner and the first author \cite{HK}. On the other hand there is a non-standard holomorphic
embedding of the Danielevski surface into $\C^3$, which follows from the ideas of Rosay and Rudin \cite{RR1}:

\begin{proposition} Any algebraic subvariety $A$ in $\C^n$ ($n\ge2$) admits another holomorphic embedding into $\C^n$ not isomorphic to the inclusion.
\end{proposition}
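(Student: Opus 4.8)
\emph{The plan} is to distinguish a suitably constructed embedding from the inclusion by means of a biholomorphic invariant of the \emph{complement} --- namely Eisenman $n$-hyperbolicity, precisely the invariant manufactured in Theorem~\ref{mainadd}. I first discard the degenerate cases. If $\dim A=n$ then $A=\C^n$ and every proper holomorphic embedding $\C^n\hookrightarrow\C^n$ is an automorphism, hence isomorphic to the inclusion; if $\dim A=0$ then $A$ is finite and, as recalled after Proposition~\ref{pointprop}, $\aut_{hol}(\C^n)$ acts transitively on finite subsets of fixed cardinality, so again there is a single class. Thus I may assume $1\le\dim A\le n-1$, i.e. $A$ is a proper, positive-dimensional closed subvariety of $\C^n$.

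\emph{Step 1: the complement of the inclusion is not Eisenman $n$-hyperbolic.} Let $\overline A\subset\P^n$ be the projective closure and $A_\infty=\overline A\cap H_\infty$ its trace on the hyperplane at infinity $H_\infty\cong\P^{n-1}$; since $A$ is affine of dimension $\le n-1$ one has $\dim A_\infty=\dim A-1\le n-2$, so $A_\infty$ is a \emph{proper} algebraic subset of $\P^{n-1}$. Pick a direction $[v]\in\P^{n-1}\setminus A_\infty$. Then $\mathrm{dist}(t v,A)\to\infty$ as $t\to\infty$: otherwise some $a_k\in A$ would satisfy $|t_k v-a_k|\le C$ with $t_k\to\infty$, whence $a_k/t_k\to v$ and $[a_k]\to[v]\in A_\infty$, a contradiction. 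Writing $p_t=tv$ and $R_t=\tfrac12\,\mathrm{dist}(p_t,A)\to\infty$, the affine map $F_t\colon\B_n\to\C^n\setminus A$, $z\mapsto p_t+R_t z$, maps into the complement and has $|JF_t(0)|=R_t^{\,n}\to\infty$. By the definition of the Eisenman norm this forces $E_n^{\C^n\setminus A}(p_t,u)=0$ for the (up to a constant unique) nonzero $u\in D^n_{p_t}(\C^n\setminus A)$, so $\C^n\setminus A$ is not Eisenman $n$-hyperbolic.

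\emph{Step 2: production of an embedding with hyperbolic complement.} Starting from the inclusion $\iota\colon A\hookrightarrow\C^n$ I would run the inductive Rosay--Rudin construction from the proof of Theorem~\ref{main}, but in the trivial parameter case $k=0$ and retaining \emph{only} the growth bookkeeping. On each sphere $\partial(\mu+1)\B_n$ one selects finitely many ``forbidden'' points $a^\mu_j$ off the current image (possible by Lemma~\ref{lemma4.3copy} together with $\dim A<n$, which makes the image of measure zero on the sphere), and one perturbs by an Anders\'en--Lempert automorphism $\alpha_\mu\in\aut_{hol}(\C^n)$ so as to keep the $a^\mu_j$ off the image while enforcing the growth condition $2_\mu$ and the convergence/properness conditions $4_\mu,5_\mu,9_\mu,10_\mu$. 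The limit $\Psi=\lim_\mu(\alpha_\mu\circ\cdots\circ\alpha_1)\circ\iota$ is then a proper holomorphic embedding of $A$ into $\C^n$, and the verbatim argument of Theorem~\ref{mainadd} --- which uses nothing beyond the growth condition $2_\mu$ --- shows that $\C^n\setminus\Psi(A)$ is Eisenman $n$-hyperbolic. I emphasize that none of the discarded conditions $3_\mu,6_\mu,7_\mu,8_\mu$ (the point-marking, the osculation and the $\rho(t(x))$ estimates) is needed here; in particular the hypothesis that $\aut_{hol}(A)$ be a Lie group is \emph{not} required, and Step~2 in fact works for any closed subvariety of dimension $<n$, algebraicity of $A$ entering only in Step~1.

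\emph{Conclusion and main obstacle.} If $\Psi$ were isomorphic to the inclusion there would be $\varphi\in\aut_{hol}(\C^n)$ with $\varphi(A)=\Psi(A)$, hence $\varphi(\C^n\setminus A)=\C^n\setminus\Psi(A)$; as Eisenman $n$-hyperbolicity is a biholomorphic invariant, $\C^n\setminus A$ would be Eisenman $n$-hyperbolic, contradicting Step~1. Therefore $\Psi$ is a holomorphic embedding not isomorphic to the inclusion. The only genuine work is Step~2: one must check that the growth-restriction induction of the main proof goes through for an \emph{arbitrary} (possibly singular, possibly with large automorphism group) closed subvariety $A\subset\C^n$ of dimension $<n$, the smooth part being treated as in the proof of Theorem~\ref{main}. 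This is exactly where the ideas of Rosay and Rudin (Lemma~\ref{lemma4.3copy}) do the decisive work, and I expect the bookkeeping for singular $A$ to be the main obstacle.
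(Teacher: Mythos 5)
Your overall strategy --- distinguish the new embedding from the inclusion by a biholomorphic invariant of the complement, namely Eisenman $n$-hyperbolicity --- founders at Step~1, and the gap is not repairable. From the existence of round balls $B(p_t,R_t)\subset\C^n\setminus A$ with $R_t\to\infty$ you conclude that $E_n^{\C^n\setminus A}(p_t,u)=0$; but for each \emph{fixed} $t$ the affine map $F_t$ only yields the upper bound $E_n(p_t,u)\le \|u\|^2/R_t^{2n}$, which is positive. Since the paper's definition of Eisenman $n$-hyperbolicity is pointwise ($E_n(p,u)>0$ for every $p$, with no uniformity in $p$ --- and indeed the bound produced in Theorem~\ref{mainadd} degenerates as $p\to\infty$), a sequence of large balls drifting off to infinity is perfectly compatible with hyperbolicity. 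This is not a technicality: take $A\subset\C^2$ to be the union of four affine lines in general position. Then $\C^2\setminus A=\P^2\setminus\{\text{five lines in general position}\}$ is complete Kobayashi hyperbolic by Green's theorem, hence Eisenman $2$-hyperbolic at every point, yet it contains arbitrarily large round balls exactly as your Step~1 produces. So the complement of the inclusion can already be Eisenman $n$-hyperbolic, and your invariant cannot separate it from $\C^n\setminus\Psi(A)$. (Nor can one pass to a ``sequence'' version of the invariant: under an automorphism $\varphi$ the Jacobian factor $|J\varphi(p_t)|$ may decay faster than $R_t^{n}$ grows, so ``$|JF_j(0)|\to\infty$ along some sequence of base points'' is not preserved.) Your Step~2 is essentially sound --- the growth bookkeeping of Theorem~\ref{main}/\ref{mainadd} does produce, for any closed positive-dimensional subvariety, an embedding with Eisenman-hyperbolic complement without any Lie-group hypothesis --- but without Step~1 it proves nothing.

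The paper's own proof uses a completely different invariant, tailored to algebraicity: since a generic linear projection restricted to $A$ is proper, every discrete sequence in $A$ is \emph{tame} in the sense of Rosay and Rudin \cite{RR1}; one then builds (as in \cite{FGR}) an embedding $\varphi$ of $A$ whose image contains a non-tame discrete set $F$, and an automorphism carrying $A$ onto $\varphi(A)$ would carry a tame set onto the non-tame set $F$, a contradiction. If you want to salvage your approach you would have to prove that $\C^n\setminus A$ actually fails pointwise Eisenman $n$-hyperbolicity for every algebraic $A$, which the four-lines example shows is false; switching to the tameness obstruction is the correct move here.
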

\begin{proof}
The restriction of a generic projection onto a hyperplane  to $A$ is a proper map. Thus by the results in \cite{RR1}
any discrete sequence of points in $A$ is tame (in fact very tame). Now there is a holomorphic embedding $\varphi$ of $A$
into $\C^n$ (constructed by applying a sequence of automorphisms to the inclusion) such that $\varphi (A)$ contains a non tame set $F$ (details as in \cite{FGR}). The existence of a holomorphic automorphism mapping $A$ onto $\varphi (A)$ contradicts the non tameness of $F$.
\end{proof}

It would be interesting to know under which conditions parametrized points (by any parameter space and in any category, continuous, holomorphic differentiable, algebraic) are simultanuously
standardizable.

\section{APPENDIX: proofs of  technical preparations}\label{prooflemmas}

In this section we give the proofs of the lemmas etc. used in the proof of the main theorem. 

\subsection{A parametrized version of the Andersén-Lempert theorem}

\medskip

Our main technique we use to construct  families of embeddings are (compositions of)
automorphisms of $\C^n$.

The ground-breaking papers of Anders\' en and Lempert (\cite{A},
\cite{AL}) established remarkable properties of the automorphism group
of $\C^n$ $(n \ge 2 )$ which imply, in particular, that any local
holomorphic phase flow on a Runge domain $\Omega$ in $\C^n$ can be
approximated by global holomorphic automorphisms of $\C^n$ (for an
exact statement see Theorem 2.1 in \cite{FR}). We will give here a parametrized version
of the so called Andersén-Lempert-theorem  and in addition we consider the following two geometric structures:
that of vector fields vanishing on the first $N$ standard points in $\C^n$, and that of vector fields vanishing on the first coordinate axis. Since the parametric version is
an easy consequence of the non-parametric version and the fixing of the first $N$-standard points is a special case of Theorem 6 in \cite{KaKu} we just give a small indication of the proof.

%In what follows by an automorphism of $\C^n$ depending on a
%parameter $w\in \C^k$ we mean an element of
%$$\mathrm{Aut}^{k}_{hol}(\C^{n}):=  \{ \psi \in \mathrm{Aut}_{hol}(\C^{k+n})\quad :
%\quad \psi (w, z)= (w, \psi_1 (w,z))\}, $$ and approximation is understood
%to be uniform on compacts. Also remember the following definition
%Remember the following definition
%
%\begin{definition}
%A set $K\subset \C^n$ is polynomially convex if $K=\{ z\in \C^n
%:|f(z)|\leq |f|_K \text{ for every } f\in P(\C^n)\}$,  where
%$|f|_K=\sup_{z\in K}|f(z)|$.
%\end{definition}

\begin{theorem}[Andersén-Lempert-theorem with parameter and fixing finitely many points]\label{alwpfc}

Let $\Omega$ be an open set in $\C^k\times \C^n$ ($n\geq 2$) and
let $(w,z_j)=(w,j,0,\ldots ,0)\in \Omega$, $j=1,\ldots
,N$. For every $t\in [ 0,1]$ let $\Phi_t$ be a biholomorphic map
from $\Omega$ into $\C^k\times \C^n$, which is of the form
$$\Phi_t(w,z)=(w,\varphi_t(w,z)), \quad z\in \C^n,w\in \C^k$$
such that $\Phi_t(w,z_j)=(w,z_j)\ \forall w\in \C^k$ (resp. $\Phi_t(w,z_1,0,\ldots, 0)=(w,z_1,0, \ldots, 0)\ \forall z_1 \in \C \ \forall w\in \C^k$) and such that it is of class
$C^2$ in $(t,z,w)\in [0,1]\times \Omega$. Assume that each domain
$\Phi_t(\Omega )$ is Runge in $\C^k\times \C^n$. If $\Phi_0$ can
be approximated on $\Omega $ by holomorphic automorphisms of
$\C^n$ depending on the parameter $w\in \C^k$, fixing $(w,z_j)$ for every $w\in \C^k$ (resp. fixing $(w,z_1,0, \ldots, 0)\  \forall z_1 \in \C \ \forall w\in \C^k$) then for every
$t\in [0,1]$ the map $\Phi_t$ can be approximated on $\Omega$ by
holomorphic automorphisms $\alpha$ of $\C^n$ depending on the parameter
$w\in \C^k$ such that  $\alpha (w,z_j)=(w,z_j) \  \forall w\in \C^k$ (resp. $\alpha (w,z_1,0,\ldots, 0)=(w,z_1,0, \ldots, 0)\ \forall z_1 \in \C \ \forall w\in \C^k$).

\end{theorem}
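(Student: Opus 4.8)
The plan is to deduce both the parameter dependence and the two fixing conditions from the corresponding non-parametric Anders\'en--Lempert theorem by treating the parameter $w\in\C^{k}$ as a collection of \emph{inert} coordinates on $\C^{k}\times\C^{n}=\C^{k+n}$ which are never differentiated. The automorphisms we are after are exactly the elements of $\Aut_{hol}^{k}(\C^{n})$, i.e. automorphisms of $\C^{k+n}$ commuting with the projection $\pi\colon\C^{k+n}\to\C^{k}$; equivalently, they are generated by flows of holomorphic vector fields on $\C^{k+n}$ having no $\partial/\partial w_{j}$--component (\emph{fiber-preserving} fields). Writing $A=\bigcup_{j=1}^{N}\bigl(\C^{k}\times\{z_{j}\}\bigr)$ in the first case, resp. $A=\C^{k}\times\{(z_{1},0,\ldots,0):z_{1}\in\C\}$ in the coordinate-axis case, the required fixing condition translates into: the generating vector fields should vanish on $A$.

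First I would reduce to the case $\Phi_{0}=\mathrm{id}$. Replacing $\Phi_{t}$ by $\Phi_{t}\circ\Phi_{0}^{-1}$ and composing at the very end with the given approximation of $\Phi_{0}$ (which by hypothesis already lies in $\Aut_{hol}^{k}(\C^{n})$ and fixes $A$) we may assume the isotopy starts at the identity. Then I would pass to the infinitesimal generator
\[
V_{t}=\Bigl(\tfrac{\partial}{\partial s}\Phi_{s}\Big|_{s=t}\Bigr)\circ\Phi_{t}^{-1},
\]
a time-dependent holomorphic vector field on the Runge domain $\Phi_{t}(\Omega)$, of class $C^{1}$ in $t$. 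Because each $\Phi_{t}$ has the form $(w,\varphi_{t}(w,z))$, the field $V_{t}$ carries no $\partial/\partial w_{j}$--component, i.e. it is fiber-preserving; and because $\Phi_{t}$ fixes $A$ pointwise for every $t$, evaluating $V_{t}(a)=\tfrac{\partial}{\partial t}\Phi_{t}(a)=0$ for $a\in A$ shows that $V_{t}$ vanishes on $A$.

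The heart of the argument is the approximation step. On the Runge domain $\Phi_{t}(\Omega)$ I would approximate $V_{t}$, uniformly on compacts and holomorphically in $w$, by finite sums of \emph{complete} vector fields that are simultaneously fiber-preserving and vanishing on $A$. This is precisely where Theorem~6 of \cite{KaKu} enters: its density statement for the complete fields associated with the vanishing (fixing) condition applies to the subvariety $A\subset\C^{k+n}$, and since the shear and overshear fields it produces act only in the $z$-directions, with coefficients depending holomorphically on $(w,z)$, they are automatically fiber-preserving. The two geometric structures of the statement correspond exactly to the two choices of $A$ (finitely many points, resp. the axis), the finite-point case being literally the special case of \cite{KaKu} quoted in the text. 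What must be checked is only that the Lie-combination and Oka--Weil approximation arguments underlying that theorem are insensitive to the inert variables $w$, which is clear because those variables are never differentiated.

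Finally I would integrate: partitioning $[0,1]$ finely and composing, over each subinterval, the time-$\Delta t$ flows of the complete fields produced above yields automorphisms in $\Aut_{hol}^{k}(\C^{n})$ fixing $A$ pointwise which, by the standard telescoping estimate of the Anders\'en--Lempert method (Theorem~2.1 in \cite{FR}), approximate $\Phi_{t}$ uniformly on compacts of $\Omega$; composing with the approximation of $\Phi_{0}$ then completes the proof. I expect the main obstacle to be organizational rather than substantial: verifying that the three constraints on the approximating fields --- completeness, fiber-preservation, and vanishing on $A$ --- can be met at once with holomorphic dependence on $w$, and that the passage from ``$V_{t}$ vanishes on $A$'' to ``the flows fix $A$ pointwise'' delivers exactly the pointwise fixing demanded (and, in the second case, along the whole $1$-dimensional center). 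Since fiber-preservation and vanishing on $A$ are built into the very class of generating fields, this reduces to confirming that the cited non-parametric machinery is stable under the addition of inert parameters, which is the ``small indication'' the statement promises.
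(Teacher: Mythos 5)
Your overall skeleton --- reduce to $\Phi_0=\mathrm{id}$, pass to the time-dependent generator $V_t$, observe that it is fiber-preserving and vanishes on the fixed set $A$, approximate by complete fields, and integrate --- is exactly the ``standard techniques'' the paper also invokes, and that part is fine. The genuine gap sits at the one step that is actually new here: producing, for a \emph{parametrized} polynomial field, a decomposition into finitely many \emph{complete} fields that are simultaneously fiber-preserving and vanish on $A$. You propose to get this by applying Theorem~6 of \cite{KaKu} directly to the subvariety $A\subset\C^{k+n}$, asserting that the shear fields it produces ``act only in the $z$-directions.'' That assertion is unsupported: applied on $\C^{k+n}$, the theorem decomposes a field vanishing on $A$ into complete fields vanishing on $A$, but nothing forces the individual summands to have vanishing $\partial/\partial w_j$-components (any cancellation in the $w$-directions happens only in the sum, not term by term), and the flow of a non-fiber-preserving field leaves the class $\mathrm{Aut}^k_{hol}(\C^n)$. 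Your fallback --- that the machinery of \cite{KaKu} is ``insensitive to inert variables'' --- is precisely the claim that requires proof, and you do not supply it. You also cite only \cite{KaKu} for the coordinate-axis case, where the codimension-two hypothesis is problematic for small $n$; the paper invokes Theorem~5.1 of \cite{V} there instead.

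The missing idea is the paper's parametrized Anders\'en--Lempert observation together with its short proof: expand the polynomial field as $X=\sum_\alpha w^\alpha X_\alpha$, where each coefficient $X_\alpha$ is a polynomial field on $\C^n$ alone (no parameter) vanishing on the finite point set (resp.\ the first coordinate line); decompose each $X_\alpha$ \emph{non-parametrically} by \cite{KaKu} (resp.\ \cite{V}) into a finite Lie combination of complete fields $\theta^i_\alpha$ with the same vanishing; then replace $\theta^1_\alpha$ by $w^\alpha\theta^1_\alpha$ in that combination. Two observations make this work: (i) $w^\alpha$ is constant on the orbits of any fiber-preserving field, so $w^\alpha\theta^1_\alpha$ is still globally integrable on $\C^{k+n}$; and (ii) since none of the fields differentiates $w$, the modified Lie combination reproduces exactly $w^\alpha X_\alpha$. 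Summing over $\alpha$ yields the decomposition with all three constraints --- completeness, fiber-preservation, vanishing on $A$ --- met at once, which is the point your proposal leaves open. Without this (or an equivalent device) the argument does not close.
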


To indicate the proof we just remark that the above theorem follows by standard techniques from
the following version of the

\bigskip\noindent
{\bf Anders\'en-Lempert-observation:}

\textit{ Every polynomial vector field on $\C^{k+n}$ ($n\geq 2$) of the form 
\begin{multline*}
X=p_1(w_1,\ldots ,w_k,z_1,z_2,\ldots ,z_n)\frac{\partial}{\partial z_1}+ \ldots +\\
+ p_n(w_1,\ldots ,w_k,z_1,z_2,\ldots ,z_n)\frac{\partial}{\partial
z_n}.
\end{multline*}  vanishing at the first $N$ standard points, i.e. with $$p_i ( w,z_j) = 0 \ \forall i = 1, \ldots , n \ \forall j= 1, \ldots , N ,$$
is a
finite Lie combination  of completely integrable polynomial vector fields of the above form  vanishing at the first $N$ standard points. The same holds if we consider polynomial vector fields vanishing on the first coordinate line instead.}

\bigskip
To prove this observation we develop $X$ by powers of $w$

$$ X =  \sum_\alpha w^\alpha X_\alpha $$ and remark that the polynomial vector fields $X_\alpha$ on $\C^n$
vanish at the first $N$ standard points (resp. on the first coordinate line). By Theorem 6 in \cite{KaKu} (the union of the first $N$ standard points is an algebraic subset of $\C^n$ of codimension at least 2) 
(resp. by Theorem 5.1 in \cite{V})
they can be written as a finite Lie combination of globally integrable polynomial fields on $\C^n$ vanishing on the first $N$ standard points (resp.  on the first coordinate line) say $\theta^i_\alpha$ $i= 1, \ldots, N(\alpha)$. The same Lie combination with $\theta^1_\alpha$ replaced by $w^\alpha \theta^1_\alpha$
(which is still globally integrable on $\C^{n+k}$, on each orbit the factor $w^\alpha$ is a constant) yields
$w^\alpha X_\alpha$ as a Lie combination of globally integrable fields. Summing up over the multiindex $\alpha$ we get the desired result.

%%%%%%%%%%%%%%%%%%%%%%%%%%%%%%%%%%%%%%%%%%%%%%%%%%%%%%

\subsection{Families of generic finite sets with respect to affine automorphisms}

As already mentioned we will use growth restrictions to prove that the embeddings for different parameters are not equivalent.
The conclusion of these growth conditions  will be the following: If
two different embeddings in our family are equivalent, then their
images can be mapped onto each other only by some affine
automorphism of $\C^n$. Although it is very unlikely that the
images of two different and more or less complicated embeddings
can be mapped onto each other by affine automorphisms, we must be
accurate in excluding this possibility. Here are some technical
preparations to this point.

If we choose $n+1$ points $x_1,x_2,\ldots ,x_{n+1}$ such that the
difference vectors $x_1-x_i$, $i=2,3,\ldots ,n+1$ form a basis of
$\C^n$, i.e. $x_1,x_2,\ldots ,x_{n+1}$ do not all lie on some
affine hyperplane, then with a little linear algebra we reach the
following conclusion:

\begin{lemma}\label{pointlemma}
For each $m\geq n+2$ there exist $m$ points $x_1,x_2,\ldots
,x_m\in \C^n$ with the following property:  No affine automorphism
$\alpha \in \rm{Aff}(\C^n )$ of $\C^n$ can map $n+2$ of them into
the set $\{  x_1,x_2,\ldots ,x_m\}$.
\end{lemma}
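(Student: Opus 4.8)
The plan is to obtain such points by a genericity argument, the decisive input being that an affine automorphism of $\C^n$ is uniquely determined by the images of any $n+1$ affinely independent points. First I would record the ``general position'' hypothesis already isolated just before the statement: a configuration $(x_1,\dots,x_m)\in(\C^n)^m$ for which every $(n+1)$-element subset is affinely independent (equivalently, no $n+1$ of the points lie on a common affine hyperplane). This is a Zariski-open dense condition on $(\C^n)^m\cong\C^{nm}$, so it costs nothing to impose, and it guarantees that among any $n+2$ of the points one may always select $n+1$ that determine an affine map.

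Next I would set up the over-determination. Suppose some $\alpha\in\mathrm{Aff}(\C^n)$ maps $n+2$ of the points into the set. Choose $n+1$ affinely independent points among these $n+2$, say $\alpha$ sends the ordered tuple $(x_{i_0},\dots,x_{i_n})$ to $(x_{k_0},\dots,x_{k_n})$; by rigidity this already pins down $\alpha$. There are only finitely many such combinatorial data (ordered $(n+1)$-tuples of source indices together with ordered $(n+1)$-tuples of distinct target indices), hence only finitely many candidate maps $\alpha$, each expressed by Cramer's rule as a rational function of the configuration. The surviving requirement is that the $(n+2)$-nd point $x_{i_{n+1}}$ also be sent into the set, say $\alpha(x_{i_{n+1}})=x_{k_{n+1}}$. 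Writing $x_{i_{n+1}}=\sum_{s=0}^n\lambda_s x_{i_s}$ with $\sum_s\lambda_s=1$ (its barycentric coordinates with respect to the affinely independent frame, which exist and are rational functions of the points) and using that affine maps preserve affine combinations, the requirement becomes $\sum_{s=0}^n\lambda_s\,x_{k_s}=x_{k_{n+1}}$. Clearing the determinant in the denominator turns it into the vanishing of a polynomial $R_{\mathrm{data}}$ on $\C^{nm}$.

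The heart of the argument --- the ``little linear algebra'' --- is to check that $R_{\mathrm{data}}\not\equiv 0$ for every datum other than the trivial one in which the source tuple equals the target tuple (where $\alpha=\mathrm{id}$). Indeed the coefficients $\lambda_s$ are nonconstant and depend only on the source points $x_{i_0},\dots,x_{i_{n+1}}$, while the target points enter $\sum_s\lambda_s x_{k_s}$ linearly; varying a target coordinate that is a genuinely free variable shows that $\sum_s\lambda_s x_{k_s}\equiv x_{k_{n+1}}$ can hold identically only when the assignment forces $k_s=i_s$ for all $s$, i.e. $\alpha=\mathrm{id}$. Hence for every nontrivial datum $\{R_{\mathrm{data}}=0\}$ is a proper algebraic subvariety of $\C^{nm}$. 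Intersecting the general-position locus with the complement of the finite union of these subvarieties yields a nonempty (in fact Zariski-dense) set of admissible configurations; any $(x_1,\dots,x_m)$ in it has the desired property, since an affine automorphism sending $n+2$ of its points into $\{x_1,\dots,x_m\}$ cannot fix $n+1$ affinely independent ones unless it is the identity. Concretely the same conclusion is reached in one stroke by choosing all $nm$ coordinates algebraically independent over $\mathbb{Q}$, which makes every nontrivial $R_{\mathrm{data}}$ nonzero simultaneously.

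I expect the main obstacle to be exactly this last verification: cleanly separating the genuinely constrained data from the trivial identity datum and establishing $R_{\mathrm{data}}\not\equiv 0$ in all remaining cases. The affine-combination formula $\alpha(x_{i_{n+1}})=\sum_s\lambda_s x_{k_s}$ is what makes this transparent, reducing the whole point to the nonconstancy of the barycentric coordinates $\lambda_s$.
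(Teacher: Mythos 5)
Your proof is correct and is essentially the ``little linear algebra'' the paper leaves implicit: the paper offers no argument for this lemma beyond the preceding remark that $n+1$ points not lying on a common affine hyperplane rigidify affine maps, and your genericity argument (an affine frame among the $n+2$ points pins down $\alpha$, the remaining point over-determines it via barycentric coordinates, and each nontrivial combinatorial datum cuts out a proper algebraic subvariety of $(\C^n)^m$) is the intended completion. One small remark: as literally stated the lemma fails for $\alpha=\mathrm{id}$; you prove the version actually invoked in the proof of Proposition \ref{pointprop} (``no affine automorphism except the identity''), and your isolation of the trivial datum, where the verification $R_{\mathrm{data}}\not\equiv 0$ genuinely requires the pairwise distinctness of generic barycentric coordinates, is exactly where that exclusion enters.
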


\begin{notation}
Given any open subset $\Omega$ of $\C^n$ the points
$x_1,x_2,\ldots ,x_m$ can be chosen to be contained in $\Omega$.
\end{notation}

Let $\delta_{ij}$ denote the diagonal
$$\delta_{ij}=\{ (z_1,z_2,\ldots , z_N)\in (\C^n)^N:z_i=z_j\}$$
and $(\underbrace{\C^n\times \cdots \times \C^n}_N\setminus
\bigcup_{1\leq i<j\leq N}\delta_{ij})/S_N$ is the quotient
(manifold since we have excluded all diagonals) by the action of
the symmetric group $S_N$ in $N$ letters acting by permuting the
entries on $N$-tuples of points in $\C^n$. The corresponding map
is denoted by $\pi$.

\begin{lemma}\label{nnklemma}
Let $n\geq 2$ and $k>0$ be natural numbers. Then there exists some
$N\in \N$ such there is an injective holomorphic map
$$\varphi :\C^k \to \underbrace{\C^n\times \cdots \times \C^n}_N\setminus \bigcup_{1\leq i<j\leq N}\delta_{ij}$$
such that the composition map
$$\pi \circ \varphi :\C^k \to (\underbrace{\C^n\times \cdots \times \C^n}_N\setminus \bigcup_{1\leq i<j\leq N}\delta_{ij})/S_N$$
is injective. Moreover if $\varphi =(\varphi_1,\varphi_2,\ldots
,\varphi_N)$ then $\C^n\setminus \cup^N_{i=1}\varphi_i(\C^k)$
contains some nonempty open subset.
\end{lemma}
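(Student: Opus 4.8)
The plan is to construct $\varphi$ explicitly, using one coordinate of $\C^n$ as a permutation-invariant \emph{label} that separates the $N$ points and a linear injection to carry the parameter. First I would choose $N\geq k$ and fix a linear injection $\psi=(\psi_1,\ldots,\psi_N):\C^k\hookrightarrow(\C^{n-1})^N=\C^{(n-1)N}$, where $\psi_i:\C^k\to\C^{n-1}$; such a map exists because $n\geq 2$ forces $n-1\geq 1$, hence $(n-1)N\geq N\geq k$. Then, writing points of $\C^n$ as $(z_1,z_2,\ldots,z_n)$, I would define $\varphi_i(w):=(i,\psi_i(w))\in\C^n$ for $i=1,\ldots,N$ and set $\varphi=(\varphi_1,\ldots,\varphi_N)$.

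Next I would check that $\varphi$ lands in the configuration space. For each fixed $w$ the points $\varphi_1(w),\ldots,\varphi_N(w)$ have pairwise distinct first coordinates $1,2,\ldots,N$, so they are pairwise distinct; thus $\varphi(w)\notin\bigcup_{1\leq i<j\leq N}\delta_{ij}$, and $\varphi$ maps $\C^k$ into $(\C^n)^N\setminus\bigcup_{i<j}\delta_{ij}$ as required.

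The key step is the injectivity of $\pi\circ\varphi$, where the label does all the work. Suppose $w_1\neq w_2$. Since $\psi$ is injective there is an index $i$ with $\psi_i(w_1)\neq\psi_i(w_2)$. The point $\varphi_i(w_1)=(i,\psi_i(w_1))$ has first coordinate $i$, so the only member of $\{\varphi_1(w_2),\ldots,\varphi_N(w_2)\}$ that could equal it is $\varphi_i(w_2)=(i,\psi_i(w_2))$; but these differ in the last $n-1$ coordinates. Hence $\varphi_i(w_1)$ lies in the unordered set attached to $w_1$ but not in that attached to $w_2$, so the two $S_N$-orbits are distinct and $\pi\circ\varphi$ is injective, which in particular yields injectivity of $\varphi$ itself. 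For the final assertion, each image $\varphi_i(\C^k)$ is contained in the affine hyperplane $\{z_1=i\}\subset\C^n$, so $\bigcup_{i=1}^N\varphi_i(\C^k)$ lies in a finite union of hyperplanes, whose complement is open and dense; this provides the required nonempty open subset.

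I expect the only genuine subtlety to be resisting the natural but flawed idea of confining the $N$ points to disjoint balls around fixed, well-separated centers so as to force the matching permutation to be the identity. That approach fails: a bounded holomorphic perturbation of a fixed center is constant by Liouville's theorem, so it cannot encode the parameter injectively. The remedy, as above, is to separate the points by a fixed value of one coordinate, a label invariant under permutation, rather than by geometric proximity, which costs nothing while leaving the remaining $n-1$ coordinates free to accommodate an unbounded linear injection of $\C^k$.
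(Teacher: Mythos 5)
Your proof is correct, but it takes a genuinely different and more elementary route than the paper's. The paper picks $N$ with $n\cdot N\geq k$, invokes the existence of countably many pairwise disjoint Fatou--Bieberbach domains $\Omega_1,\Omega_2,\ldots$ in $\C^n$ (cited from the literature, arising as basins of attraction of an automorphism with countably many attracting fixed points), and composes an arbitrary injective holomorphic map $\C^k\to(\C^n)^N$ with the product biholomorphism onto $\Omega_1\times\cdots\times\Omega_N$: pairwise disjointness of the $\Omega_i$ forces the image to miss every diagonal $\delta_{ij}$ and every nontrivial $S_N$-translate of itself, and the unused domains $\Omega_{N+1},\Omega_{N+2},\ldots$ supply the open set in the complement. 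You instead build $\varphi$ explicitly, using the first coordinate as a permutation-invariant integer label, $\varphi_i(w)=(i,\psi_i(w))$ with $\psi$ a linear injection into $(\C^{n-1})^N$ (available since $(n-1)N\geq N\geq k$); your matching argument --- the only point of the $w_2$-configuration whose label agrees with that of $\varphi_i(w_1)$ is $\varphi_i(w_2)$, and these differ for a suitable $i$ --- correctly gives injectivity of $\pi\circ\varphi$, and confining the images to the hyperplanes $\{z_1=i\}$ settles the final claim. Your approach buys the complete elimination of the Fatou--Bieberbach machinery at no cost for this lemma and all of its uses in the paper (Proposition \ref{pointprop} only needs the stated conclusion); the paper's approach additionally localizes each parametrized point inside a prescribed open set, which is not needed here. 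One small remark: the ``flawed idea'' you warn against (bounded balls around fixed centers, defeated by Liouville) is not the paper's construction --- Fatou--Bieberbach domains are unbounded copies of $\C^n$, so the paper's version of ``disjoint neighborhoods'' does accommodate an injective image of $\C^k$.
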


\begin{proof}
The desired number $N$ will be any number such that $n\cdot N\geq
k$. Namely we will prove that there is a biholomorphic image
$\Omega$ of $(\C^n)^N$ in $(\C^n)^N$ (a so called Fatou-Bieberbach
domain) which does not intersect any of the diagonals
$\delta_{ij}$ and so that the restriction of the quotient map $\pi
:(\C^n)^N \to ((\C^n)^N)/S_N$ onto $\Omega$ is injective, i.e. if
the point $(z_1,z_2,\ldots ,z_N)$ is in $\Omega$ then for any
permutation $\sigma \in S_N\setminus \textrm{Id}$ the point
$(z_{\sigma (1)},z_{\sigma (2)},\ldots , z_{\sigma (N)})$ is not
contained in $\Omega$.

For this start with countably many pairwise disjoint
Fatou-Bieber\-bach domains $\Omega_1,\Omega_2,\ldots
,\Omega_N,\ldots$ in $\C^n$. Such domains exist, see for example
\cite{EFW}, where countably many pairwise disjoint
Fatou-Bieberbach domains are constructed arising as basins of
attraction  of some automorphism of $\C^n$ having countably many
attracting fixed points. Now take $N$ of them and denote by
$\psi_i$ some biholomorphic maps $\psi_i:\C^n\to \Omega_i\subset
\C^n\quad i=1,2,\ldots ,N$.

The map
$$\psi :(\C^n)^N\to (\C^n)^N, \psi (z_1,z_2,\ldots ,z_N)=(\psi_1(z_1),\psi_2(z_2), \ldots ,\psi_N(z_N))$$
is injective, its image is the Fatou-Bieberbach domain
$\Omega_1\times \Omega_2\times \cdots \times \Omega_N$ in
$(\C^n)^N$, which does not intersect any diagonal since the
$\Omega_i$'s are pairwise disjoint and for the same reason $\Omega
\cap \sigma (\Omega )=\emptyset$ for any permutation $\sigma \in
S_N\setminus    \textrm{Id}$. The complement $\C^n\setminus
\cup_{i=1}^N(\C^k)$ contains the union
$\cup_{i=N+1}^{\infty}\Omega_i$ of all remaining Fatou-Bieberbach
domains, hence a non-empty subset. For $n\cdot N\geq k$ we can
choose some injective holomorphic map $\alpha :\C^k \mapsto
(\C^n)^N$ and put $\varphi :=\psi \circ \alpha$, which is the
desired map.
\end{proof}

Now we are able to prove Proposition \ref{pointprop}.

%Given natural numbers $k>0$ and $n\geq 2$ there is a number $m\in
%\N$ such there are $m$, pairwise different,  points $\xi_1,\xi_2,
%\ldots ,\xi_m:\C^k \to \C^n$ parametrized by a parameter $w\in
%\C^k$ with the following property:
%
%For $w_1\neq w_2$ there is no affine automorphism $\alpha \in
%\mathrm{Aff}(\C^n)$ which maps the set of points $\{
%\xi_1(w_1),\xi_2(w_1),\ldots ,\xi_m(w_1)\}$ onto the set of points
%$\{ \xi_1(w_2),\xi_2(w_2),\ldots ,\xi_m(w_2)\}$.
%\end{proposition}

\begin{proof}[Proof of Proposition \ref{pointprop}]
By Lemma \ref{nnklemma} we can find $N$ (for $n\cdot N\geq k$)
(pairwise different) points $\xi_1,\xi_2, \ldots ,\xi_N$ in
$\C^n$ parametrized by $\C^k$ such that for different parameters
$w_1\neq w_2\in \C^k$ the set of points $\{
\xi_1(w_1),\xi_2(w_1),\ldots ,\xi_m(w_1)\}$ and  $\{
\xi_1(w_2),\xi_2(w_2),\ldots ,\xi_m(w_2)\}$ are different. Choose
$M\in \N$ such that $M-N\geq n+2$. By Lemma \ref{pointlemma} we
can find $M$ points $x_1,x_2,\ldots ,x_M$ in $\C^n$ such that no
affine automorphism except the identity can map $n+2$ of them into
the set $\{ x_1,x_2,\ldots ,x_M\}$. Choose the next parametrized
points $\xi_{N+1},\xi_{N+2},\ldots ,\xi_{N+M}:\C^k\to \C^n$ to be
constant
$$\xi_{N+i}(w)=x_i\quad \textrm{ for every } w\in \C^k \quad i=1,2,\ldots ,M.$$
To make the parametrized points $\xi_1,\xi_2,\ldots ,\xi_{M+N}$
pairwise different (for any fixed parameter) we choose the points
$x_1,x_2,\ldots ,x_M$ from some open subset in the complement of
all images $\xi_j(\C^k)\subset \C^n\quad j=1,2,\ldots ,N$.

We claim that $\{ \xi_1(w),\xi_2(w),\ldots ,\xi_{N+M}(w)\}$ satisfy
our condition (so $m=N+M$). Indeed suppose that for two different
parameters $w_1\neq w_2\in \C^k$ there is an affine automorphism
$\alpha \in \mathrm{Aff}(\C^n)$ which map the set of points $\{
\xi_1(w_1),\xi_2(w_1),\ldots ,\xi_{N+M}(w_1)\}$ onto the set of
points   $\{ \xi_1(w_2),\xi_2(w_2),\ldots ,\xi_{N+M}(w_2)\}$.
Since $M-N\geq n+2$ at least $n+2$ of the last $M$ points,
($x_1,x_2,\ldots ,x_M$), are mapped by $\alpha$ into $\{
x_1,x_2,\ldots ,x_M\}$, and note that at most $N$ points among
them can be mapped onto the first $N$ points! By the choice of
$x_1,x_2,\ldots ,x_M$ according to Lemma \ref{pointlemma} this
implies that $\alpha$ is the identity map. But this means that the
identity maps the last $M$ (constant) points onto themselves,
hence the points $\{ \xi_1(w_1),\xi_2(w_1),\ldots ,\xi_N (w_1)\}$
onto the points $\{ \xi_1(w_2),\xi_2(w_2),\ldots ,\xi_N(w_2)\}$,
which is impossible since those sets are different by Lemma
\ref{nnklemma}. Thus no such affine automorphism $\alpha$ exists.

\end{proof}

\subsection{Moving finitely many parametrized points.}
Recall that we needed the notion of simultaneously standardazable points in the proof of the main theorem. Given $N$ parametrized points $\zeta_i:\C^k\to C^n$.  If we can find an automorphism $\psi \in \aut_{hol}^k(\C^n)$ such that
$$\psi (w,\zeta_i(w))=(w,(i,0)) \quad \textrm{for all } i=1,2,\ldots ,N \quad \textrm{and for all } w\in \C^k ,$$  we say that the points are simultaneously standardazable. 

The following theorem is a special case of the Oka-Grauert-Gromov-h principle in complex analysis. Even if our application would fit in the classical context proved by Grauert \cite{Gr1}, \cite{Gr2}, \cite{Gr3} (our transition functions are affine linear, i.e.  contained in a complex Lie group) we formulate it in a more general  (but not too general in order to avoid the discussion of sprays) way. For reference see \cite{FF2} , section 2.3 in \cite{Gro} or Theorem 1.4 in \cite{FP2}.

\begin{theorem}[Oka principle with approximation]\label{oka}
Let $X$ be a Stein manifold and let $Z$ be a locally trivial
bundle such that the fiber $Z_x$ is isomorphic to $\C^n$. If
$s : X\to Z$ is a continuous section which is holomorphic in a
neighborhood of an $\hol (X)$-convex compact subset $K$ then there
exists a holomorphic section $\tilde s: X\to Z$ such that $\tilde
s$ approximates  $s$ uniformly on $K$.
\end{theorem}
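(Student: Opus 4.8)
The plan is to place the statement inside the Oka--Grauert--Gromov homotopy principle and reduce it to the homotopy principle for sections of elliptic submersions over a Stein base. The governing observation is that the fiber $\C^n$ is elliptic: it carries the dominating spray $s_0\colon \C^n\times\C^n\to\C^n$, $s_0(y,v)=y+v$, which satisfies $s_0(y,0)=y$ and whose vertical differential $\partial s_0/\partial v|_{v=0}=\mathrm{id}$ is surjective. The strategy is therefore, first, to promote this fiberwise spray to a dominating \emph{fiber-spray} on the submersion $\pi\colon Z\to X$, so that $\pi$ becomes an elliptic submersion, and second, to quote Gromov's theorem in the form of Theorem~1.4 of \cite{FP2} (see also section~2.3 of \cite{Gro} and \cite{FF2}), which for such submersions over a Stein manifold yields the homotopy principle together with the approximation on $\hol(X)$-convex compacta asserted here.

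For the first step I would produce a holomorphic vector bundle $p\colon E\to Z$ and a fiber-preserving holomorphic map $s\colon E\to Z$ with $s(0_z)=z$ and $ds_{0_z}(E_z)=T_z Z_{\pi(z)}$ (the vertical tangent space). Over a local trivialization $\pi^{-1}(U_\alpha)\cong U_\alpha\times\C^n$ the vertical addition $s_\alpha\bigl((x,y),v\bigr)=(x,y+v)$ is a dominating spray modeled on the vertical tangent bundle $VZ=\ker d\pi$. The task is to assemble the $s_\alpha$ into one global dominating spray, and this is the one delicate point, since the transition biholomorphisms of the bundle need not carry $s_\alpha$ to $s_\beta$. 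The standard remedy is Gromov's localization and composition of sprays over the Stein base $X$: one builds the composite spray on an iterated fiber sum of copies of $VZ$, which dominates wherever any one of the local sprays does and hence dominates everywhere. I expect this globalization of the spray to be the main obstacle, and it is precisely the ``discussion of sprays'' that the general formulation is designed to finesse.

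In the situation actually used in this paper the structure group is the affine group $\mathrm{Aff}(\C^n)$, a complex Lie group acting transitively on the fiber, so $\C^n\cong\mathrm{Aff}(\C^n)/\mathrm{GL}_n(\C)$ is homogeneous and $Z$ is associated to a principal bundle with Lie structure group. There the spray can be assembled equivariantly and, more to the point, the classical Grauert Oka principle \cite{Gr1}, \cite{Gr2}, \cite{Gr3} applies directly, yielding both the existence of the holomorphic section and its approximation on $K$ without any mention of sprays. In either regime, once ellipticity of $\pi$ (resp.\ the Grauert hypothesis) is secured, the assumption that $s$ is continuous on $X$ and holomorphic in a neighborhood of the $\hol(X)$-convex compact $K$ is exactly what the approximation form of the homotopy principle consumes, producing the desired holomorphic section $\tilde s$ that approximates $s$ uniformly on $K$.
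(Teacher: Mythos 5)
Your proposal takes essentially the same route as the paper, which offers no proof of this theorem at all: it is stated as a known special case of the Oka--Grauert--Gromov principle with pointers to \cite{FF2}, section 2.3 of \cite{Gro} and Theorem 1.4 of \cite{FP2}, together with the remark that the application at hand has affine-linear transition functions so that classical Grauert \cite{Gr1}, \cite{Gr2}, \cite{Gr3} already suffices. Your additional observations --- ellipticity of the fiber $\C^n$ via the translation spray, the need to globalize it (or to bypass sprays entirely via the Lie structure group $\mathrm{Aff}(\C^n)$) --- are correct and simply make explicit why the cited theorems apply.
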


\begin{example}
Let $q_1(w),q_2(w),\ldots ,q_n(w):\C^k \to \C$ be holomorphic
functions without common zeros. We want to  find holomorphic functions
$h_1(w),h_2(w),\ldots ,h_n(w) :\C^k \to \C$ such that
\begin{equation*}
h_1(w)q_1(w)+h_2(w)q_2(w)+\ldots +h_n(w)q_n(w)=1
\end{equation*}
for every $w\in \C^k$, i.e. find a point $(h_1(w),h_2(w),\ldots
,h_n(w))$ in the hyperplane given by
\begin{equation}\label{corona}
x_1q_1(w)+x_2q_2(w)+\ldots +x_nq_n(w)=1.
\end{equation}

Since $q_1(w),q_2(w),\ldots ,q_n(w)$ do not have common zeros, we
can find local solutions:  If (e.g.) $q_1(w)\neq 0$ then we can
solve the problem in a neighborhood of $w_0$ by setting
$h_2(w)=\ldots =h_n(w)=1$ and
$$h_1(w)=\frac{1-(h_2(w)q_2(w)+\ldots +h_n(w)q_n(w))}{q_1(w)}.$$

Let $Z$ be the (locally trivial with affine linear transition functions) bundle over
$\C^k$ such that the fiber $Z_w$ is the hyperplane \eqref{corona}
in $\C^n$. Given a holomorphically convex compact set $J\in \C^k$
and a holomorphic section $s$ of the bundle over a neighborhood of $J$ we can, by standard arguments in obstruction theory (all homotopy groups of the fiber vanish), extend it to a
continuous section $s : \C^k\to Z$.
Theorem \ref{oka} gives  a holomorphic section $\tilde s : \C^k\to
Z$ which approximates $s$ uniformly on the compact $J$.
\end{example}

\begin{lemma}\label{te1.1}
Given a holomorphic map $\xi =(\xi_1,\ldots ,\xi_n):\C^k\to \C^n$,
($n\geq 2$), always disjoint from the first $N$ standard points,
$\xi (w)\notin \cup_{i=1}^N\{ (i,0,\ldots ,0)\},$
and such that the functions $\xi_2,\ldots ,\xi_n \in \hol (\C^k)$ have no common zero on $\C^k$.
Then there exist $\alpha \in \rm{Aut}^k_{hol}(\C^n)$ fixing the first $N$ standard points, $\alpha (w,i,0,\ldots ,0)=(w,i,0,\ldots ,0)$ with $\alpha (w,\xi (w))=(w,z_0)$, i.e. $\alpha (w,\xi (w))$ is a constant point $(w,z_0)$ for every $w\in \C^k$.

Moreover given a ball $J=r\overline \B_k \subset \C^k$ and a number  $R>N$ such that for $w\in J$ hold: $| \xi (w)|>R$.
Then for any $\epsilon >0$ the automorphism $\alpha$ can be chosen in such a way that
$$\max_{w\in J, |z|\leq R}|\alpha (w,z)-(w,z)| <\epsilon .$$
\end{lemma}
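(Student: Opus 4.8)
The plan is to establish the two assertions separately: first to produce \emph{some} $\alpha\in\aut_{hol}^k(\C^n)$ fixing the first $N$ standard points and collapsing $\xi$ to a constant point $z_0$, and then to upgrade the construction so that in addition the estimate on $J\times R\overline{\B}_n$ holds. For the existence part I would straighten $\xi$ one coordinate at a time by parametrized shears. The hypothesis that $\xi_2,\dots,\xi_n$ have no common zero is precisely what the corona-type construction of the preceding Example delivers: functions $h_2,\dots,h_n\in\hol(\C^k)$ with $\sum_{i=2}^n h_i(w)\xi_i(w)\equiv 1$. Setting $H(w,z)=\sum_{i=2}^n h_i(w)z_i$ yields a map, linear in $z$, with $H(w,\xi(w))\equiv 1$ and $H(w,0,\dots,0)\equiv 0$. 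Fixing a constant $a\notin\{1,\dots,N\}$, the shear $\sigma_1(w,z)=(w,\,z_1+(a-\xi_1(w))H(w,z_2,\dots,z_n),\,z_2,\dots,z_n)$ fixes the standard points (as $H$ vanishes on the first axis) and sends the first coordinate of $\xi(w)$ to the constant $a$.

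After $\sigma_1$ the point has become $(a,\xi_2(w),\dots,\xi_n(w))$ with first coordinate the constant $a\neq i$. For $j=2,\dots,n$ I would then make the $j$-th coordinate constant by a shear $\sigma_j$ in the $z_j$-direction with coefficient $(b_j-\xi_j(w))P(z_1)$, where $P(z_1)=\prod_{i=1}^N(z_1-i)\big/\prod_{i=1}^N(a-i)$ equals $1$ at $z_1=a$ and vanishes at $z_1=i$. Since the $z_1$-coordinate of the point stays equal to $a$ and is never touched again, each $\sigma_j$ fixes the standard points while moving only the $j$-th coordinate of the current point to $b_j$. The composition $\alpha_0=\sigma_n\circ\cdots\circ\sigma_1$ then fixes the standard points and satisfies $\alpha_0(w,\xi(w))=(w,z_0)$ with $z_0=(a,b_2,\dots,b_n)$; choosing $a$ and the $b_j$ large we may assume $|z_0|>R$. (Alternatively every interpolating coefficient can be produced by Cartan's Theorem~B, the two relevant graphs being disjoint by the no-common-zero hypothesis for $\sigma_1$ and by $a\notin\{1,\dots,N\}$ for the later $\sigma_j$.)

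For the approximation statement the shears above are of no direct use: moving points only along coordinate directions, a shear dragging the far-away point $\xi(w)$ to a bounded constant must in general sweep across $R\overline{\B}_n$, so $\alpha_0$ cannot be made uniformly close to the identity there. Here I would invoke the parametrized Anders\'en--Lempert theorem (Theorem~\ref{alwpfc}) together with the hypothesis $|\xi(w)|>R$ on $J$. I would seek a correction $\beta\in\aut_{hol}^k(\C^n)$ fixing the first $N$ standard points and the single constant point $z_0$ with $\beta\approx\alpha_0^{-1}$ uniformly on the compact $\alpha_0(J\times R\overline{\B}_n)$; then $\alpha=\beta\circ\alpha_0$ retains the exact interpolation (both factors fix the standard points, and $\beta$ fixes the image $z_0$ of $\xi$ under $\alpha_0$) while being within $\epsilon$ of the identity on $J\times R\overline{\B}_n$. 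To manufacture $\beta$ I would connect $\alpha_0^{-1}$ to the identity through the isotopy obtained by scaling the shear coefficients, verify that every intermediate image is Runge, and apply Theorem~\ref{alwpfc}; the extra fixed point $z_0$ is brought into the framework by conjugating with a fixed parameter-independent automorphism carrying $z_0$ to the $(N+1)$-st standard point and fixing the first $N$, which is legitimate because $|z_0|>R>N$ forces $z_0$ to differ from those points.

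The main obstacle is exactly this last reconciliation: the interpolation $\alpha(w,\xi(w))=(w,z_0)$ and the fixing of the standard points are demanded for \emph{all} $w\in\C^k$, whereas Theorem~\ref{alwpfc} only yields \emph{approximation} on compacta, and the natural isotopy connecting $\alpha_0^{-1}$ to the identity does not fix $z_0$. What rescues the argument is precisely $|\xi(w)|>R$ on $J$: over $J$ both $\xi(w)$ and $z_0$ lie outside $R\overline{\B}_n$, so $\alpha_0^{-1}$ can be altered locally near $z_0$, with support off $\alpha_0(J\times R\overline{\B}_n)$, to fix $z_0$ without spoiling the approximation on the ball. Checking the Runge property of the intermediate domains and that this local modification leaves the estimate intact is the technical heart of the proof.
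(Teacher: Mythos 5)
Your existence argument (a shear built from a corona solution $\sum_{i\ge 2} h_i\xi_i\equiv 1$ supplied by the Oka principle, followed by shears whose polynomial coefficient $P(z_1)$ vanishes at the standard points and equals $1$ at the constant first coordinate $a$) is sound and close in spirit to the explicit automorphisms $\alpha_2,\alpha_3$ in the paper's proof. The gap is in the approximation part. You reduce everything to producing a correcting automorphism $\beta\in\aut_{hol}^k(\C^n)$ that fixes the first $N$ standard points \emph{and} the entire set $\{(w,z_0):w\in\C^k\}$ exactly, while approximating $\alpha_0^{-1}$ on the compact $L=\alpha_0(J\times R\overline\B_n)$ --- but constructing such a $\beta$ is a problem of essentially the same nature and difficulty as the lemma itself, and your sketch does not close it. Concretely: (i) the isotopy $(\alpha_0^t)^{-1}$ obtained by scaling the shear coefficients does not fix $z_0$, and the ``local modification with support off $L$'' you propose only repairs the endpoint $t=1$, whereas Theorem \ref{alwpfc} requires \emph{every} $\Phi_t$ to fix $(w,z_0)$ for all $w$, to be injective on the whole domain $\Omega$, and to have Runge image; (ii) injectivity at intermediate times forces the path $t\mapsto\alpha_0^t(w,z_0)$ to avoid a neighborhood of $L$, which you do not control --- the shear coefficients involve the corona solutions $h_i$, and these can be enormous wherever $\xi_2,\dots,\xi_n$ have near-common zeros (exactly the danger isolated in Example \ref{bigmotion}), so the intermediate automorphisms can throw $z_0$ anywhere; (iii) the Runge property of $\Phi_t(\Omega)$, a disjoint union of two pieces, is not automatic and must be verified.

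The paper's proof is organized precisely so that no such $\beta$ is ever needed, by reversing your order of operations: it \emph{first} applies the parametrized Anders\'en--Lempert theorem (with the first coordinate \emph{line} fixed, on the Runge domain $R'\B_n$ union small balls about $\xi(w)$) to move $\xi(w)$, for $w\in J$ only, $\epsilon$-close to the constant $(2R,1,0,\dots,0)$ while displacing $J\times R\B_n$ by less than $\epsilon$, and only \emph{then} makes the point exactly constant by shears. Because the point is already near $(2R,1,0,\dots,0)$ over $J$, the corona solutions satisfy $h_i\approx 1$ there, the coefficient $(2R-\hat q_1(w))[z_2h_2(w)+\cdots]$ of the first shear is automatically small on $J\times R\B_n$, and the last shear uses a polynomial $Q(z_1)$ with $Q(2R)=1$, $Q(i)=0$ and $|Q|<\epsilon$ on $R\B_1$; the normalization to second coordinate $1$ (rather than $0$) is what keeps the corona data bounded. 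To salvage your architecture you would essentially have to import this ``approximate first, then shear with provably small coefficients'' scheme, at which point the correcting automorphism $\beta$ becomes unnecessary.
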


\begin{proof}
The first step consists in an application of Theorem \ref{alwpfc} with fixing the first coordinate line
to bring the points $\xi (w)$, $w\in J$, arbitrarily nearby to a
constant position. To apply the theorem let $\Omega =
\bigcup_{w\in r'\B_k}\{ w\} \times \{ R'\B_n \cup \epsilon_1\B_n
(\xi (w))\}$, where $\B_n(\xi (w))$ is the  unit ball in $\C^n$ with
center in $\xi (w)$, with $r',R'$ slightly bigger than $r,R$ and
$\epsilon_1$ sufficiently small so that $\epsilon_1\B_n (\xi (w))$
has empty intersection with the first coordinate line for all
$w\in r'\B_k$. Note that $\Omega$ is Runge in $\C^k\times \C^n$.

Approximating the map $\Phi_t:[ 0,1] \times \Omega \to \C^k\times
\C^n$ defined by $\Phi_t(w,z)=(w,z)$ for every $w\in r'\B_k$,
$z\in R'\B_n$ and $\Phi_t(w,z)=(w,\xi ((1-t) w)+z-\xi (w))$ for $w\in r'\B_k ,
z\in \epsilon_1 \B_n (\xi (w))$, gives an $\alpha_1 \in
\rm{Aut}^k_{hol}(\C^n)$:
\begin{multline*}
\alpha_1 (w,(z_1,0,\ldots ,0))=(w,(z_1,0,\ldots ,0)) \ \forall z_1 \in \C \ \forall w\in \C^k\\
|\alpha_1 (w,z)-(w,z)|_{w\in J, z\in R\B_n }<\epsilon \\
|\alpha_1 (w,\xi (w))- (w,\xi (0))| < \epsilon \quad \forall w \in
J
\end{multline*}
where  $\epsilon$ is arbitrarily small.

Remark that the last $n-1$ coordinate functions of $\alpha_1 (w,
\xi (w))$ have no common zero on $\C^k$, since by assumption the
same was true for $\xi (w)$ and the first coordinate line is fixed by $\alpha_1$.

A second application of Theorem \ref{alwpfc} again with fixing the first coordinate line using a $C^2$-path from $\xi (0)$ to the point $(2R,1,0,\ldots ,0)$ not intersecting
the first coordinate line and not intersecting $R'\B_n$ shows that
we in addition can assume
$$|\alpha_1 (w,\xi (w))-(2R,1,0, \ldots ,0)|<\epsilon$$
for $w\in J$.

Denote the coordinate functions of  $\alpha_1 (w,
\xi (w))$ by $(\hut q_1(w),\hut q_2(w),\ldots ,$ $\hut q_n(w))$ and
observe that $\hut q_2(w),\ldots , \hut q_n(w)$ have no common
zeros for  $w\in \C^k$.

Now define functions $\hat h_i\in \hol (J)$ by
$$\hat h_3(w)=\ldots =\hat h_n(w)=1$$
and
$$\hat h_2(w)=\frac{1-\hut q_3(w)\hat h_3(w)-\ldots -\hut q_n(w)\hat h_n(w)}{\hut q_2(w)}$$
for $w\in J$. Note that $\hat h_2(w)\approx 1$ for $w\in J$.

By Theorem \ref{oka} (see example thereafter) we have that for
every $\epsilon>0$ there exist $h_i\in \hol (\C^k)$ with
$$\sum_{i=2}^nh_i(w)\hut q_i(w)=1, \forall w\in \C^k$$
and
$$\| h_i(w)-\hat h_i(w)\| _J<\epsilon ,$$

which implies $h_i(w)\approx 1$ for $w\in J$ for all $i=2, \ldots
,n$. Define an automorphism $\alpha_2\in \rm{Aut}^k_{hol}(\C^n)$
by
\begin{multline*}
(w,z)\mapsto \\
(w, z_1+(2R-\hut q_1(w))[z_2h_2(w)+z_3h_3(w)+\ldots z_nh_n(w)],z_2,z_3,\ldots , z_n).$$
\end{multline*}

It holds:
  $$\alpha_2(w,\hut q(w))=(w,2R,\hut q_2(w), \ldots ,\hut q_n(w))$$
  and
  $$\alpha_2(w, (i,0,\ldots ,0))=(w, (i,0,\ldots ,0))$$

  The next step is to construct an automorphism $\alpha_3\in \rm{Aut}^k_{hol}(\C^n)$ that moves $\alpha_2(w,\hut q(w))$ to $(w,2M,0,\ldots ,0)$. So define a polynomial on $\C$
  $$Q(t)=\left( (t-1)(t-2)\cdots (t-N)\frac 1{(2R-1)(2R-2)\cdots (2R-N)} \right)^H,$$
where $H$ is so large that $|t|<R$ implies $|Q(t)|< \epsilon$, and
define the automorphism $\alpha_3$ by
\begin{multline*}
(w,z)\mapsto \\
(w,z_1,z_2-Q(z_1)\hut q_2(w), z_3-Q(z_1)\hut q_3(w),\ldots
,z_n-Q(z_1)\hut q_n(w)).
\end{multline*}
From this we get
$$\alpha_3 (w,i,0,\ldots ,0)=(w,i,0,\ldots ,0)$$
for $i=1,\ldots N,$
$$\alpha_3\circ \alpha_2(w,\hut q(w))=(w,2R,0,\ldots ,0)$$
and it is easy to check that
$$\max_{w\in J, |z|\leq R} |\alpha_3 \circ \alpha_2\circ \alpha_1(w,z)|$$
is arbitrarily small. The composition $\alpha =\alpha_3 \circ
\alpha_2 \circ \alpha_1$ is our desired automorphism.
\end{proof}

\begin{notation}
The reason that we bring the points $\{ \xi (w), w\in J \}$ first
near to $(2R,1,0,\ldots ,0)$ (instead of $(2R,0,\ldots ,0)$
directly) and make them afterwards constant at the point
$(2R,0,\ldots ,0)$ is the following: Our method could lead to a
big movement of $R\B_n$, as we see in the following example.  %\ref{bigmotion}
\end{notation}

\begin{example}\label{bigmotion}
Let $K=K_1 \times  \bar \Delta^n$ where $\bar \Delta^n$ is the
closure of the unit polydisc in $\C^n$ and $K_1$ some compact set
in the parameter space $\C^k$. We will consider the following
perturbation of the first $N$ standard points:

We suppose that the first $N-1$ points remains  at their standard
positions,
$$\zeta_i(w)=(i,0,\ldots ,0)\quad i=1,2,\ldots ,N-1$$
and the $N$-th point is moved by some very small amount from the
standard position
$$\zeta_n(w)=(2R-\epsilon ,\epsilon^2,\ldots ,\epsilon^2).$$
According to the proof of Lemma \ref{te1.1} we have to find
holomorphic  functions $h_2(w),h_3(w), \ldots ,h_n(w)$ with
$\sum_{i=2}^Nh_i(w)\epsilon^2=-2R+\epsilon +2R=\epsilon$, i.e.
$\sum_{i=2}^Nh_i(w)=1/\epsilon$. Therefore the automorphism
$\alpha_2\in \mathrm{Aut}^k_{hol}(\C^n)$ defined by
$$(w,z_1,z_2,\ldots ,z_n)\mapsto (w,z_1+z_2h_2(w)+\ldots +z_nh_n(w),z_2,z_3,\ldots ,z_n)$$
moves for instance all points of the form $(0,z_1,1,\ldots ,1)\in
K_1 \times \bar \Delta ^n$ by the vector $(0,1/\epsilon ,0, \ldots
,0)$, i.e. $(0,z_1,1,\ldots ,1)\mapsto (0,z_1+1/\epsilon, 1,\ldots
,1)$, which has length going to infinity when our perturbation
of the $N$-th point is going to zero.
\end{example}

From Lemma \ref{te1.1} we get Proposition \ref{standp}

\begin{proof}[Proof of Proposition \ref{standp}]
Proceed by induction over the number $N$ of points: For $N=1$ the
(parametrized translation) automorphism defined by $(w,z)\mapsto
(w,z-\zeta_1(w)+1)$ solves the problem in general, i.e. without
any assumption on the dimension $k$ of the parameter space.

Suppose the problem is solved for $N-1$ parametrized points. To
solve it for $N$ points take by induction assumption an automorphism $\alpha_1 \in
\mathrm{Aut}^k_{hol}(\C^n)$ which moves the first $N-1$ points to
their standard places
$$\alpha_1(w,\zeta_i(w))=(w,(i,0))\textrm{ for all } i=1,2,\ldots ,N-1 \textrm{ and for all } w\in \C^k.$$
It is not difficult to find an automorphism $\alpha_2\in
\mathrm{Aut}^k_{hol}(\C^n)$ which fixes the first $N-1$ standard
points, $\alpha_2(w,(i,0))=(w,(i,0))$ for all $i=1,2,\ldots ,N-1$
and for all $w\in \C^k$, such that the submanifold of $\C^{k+n}$
described by the (moved) last parametrized point
$$U:=\{ (w,\alpha_2 \circ \alpha_1\circ \zeta_N(w)):w\in \C^k\} \subset \C^{k+n}$$
is transversal to the (parametrized) $z_1$-axis
$$V:=\{ (w,(z_1,z_2,\ldots ,z_n))\in \C^{k+n}:z_2=z_3=\ldots =z_n=0\} .$$
Because of the dimension assumption this means that the two
submanifolds $U$ and $V$ do not meet ($\dim U +\dim
V=k+(k+1)<n+k=\dim \C^{k+n}$). In other words we are in the
position of Lemma \ref{te1.1} and find an automorphism
$\alpha_3\in  \mathrm{Aut}^k_{hol}(\C^n)$ fixing the first $N-1$
standard points and moving $(w,\alpha_2 \circ \alpha_1\circ
\zeta_N(w))$ to its standard place. The composition $\alpha_3
\circ \alpha_2 \circ \alpha_1$ is the desired automorphism moving
all $N$ parametrized points into their standard positions.
\end{proof}

%%%%%%%%%%%%%%%%%%%%%%%%%%%%%%%%%%%%%%%%%%%%%%%
%%%%%%%%%%%%%%%%%%%%%%%%%%%%%%%%%%%%%%%%%%%%%%%%%

We are now set to prove the interpolation lemma, Lemma \ref{FLYTTLEMMAT}.

\begin{proof}[proof of Lemma  \ref{FLYTTLEMMAT}]
Since by the dimension assumption $\dim X + k <n$ the points $b_i(w)$ are simultaneously standardizable(see Corollary \ref{standp}), we can
find $\alpha_1\in  \rm{Aut}^k_{hol}(\C^n)$ such that
$\alpha_1(w,b_i(w))=(w,(i,0,\ldots ,0))$ for every $w\in \C^k$.
Choose $R>0$ such that $\pi_2(\alpha_1(K))\subset R\B_n$ and 
choose $M$ such that $\alpha_1^{-1}(R\B_n )\subset M\B_n$.
 Let $C$ be a positive real constant such that the point  $\tilde p(w)=Cp(w)$ is outside of the ball of radius $M$
 for every $w\in \overline \Delta$. By transversality and our dimension condition \eqref{dim}
we can assume that an arbitrary small perturbation $\gamma_p$ of the path $p(w)+t(\tilde p(w)-p(w))$,
 $t\in [0,1]$, does not intersect
$\pi_2(\phi ( \overline \Delta \times \overline X_{R}))$ for any $w\in \overline \Delta$. %More precisely, there exists $z_0\in \C^n$ , $|z_0|<\min d(p(w), K_w)\cdot \frac 1{10}$ where $K_w=K\cap \{ w\} \times \C^n$, such that the path $z_0+p(w)+t(\tilde p(w)-p(w))$ is transversal to $\pi_2(\phi_{\lambda}(\mu \overline \Delta \times \overline X_{R_{\mu}}))$.
Construct $\tilde q(w)$ and a path $\gamma_q$, in the same way
with the additional demand that the path for $q$ does not
intersect the path for $p$.

By the Andersén-Lempert-theorem with parameters and fixing
standard points (see Theorem \ref{alwpfc}) applied to the set $\alpha_1(K)$ union with a neighborhood
of the paths $\alpha_1(\gamma_p)$ and $\alpha_1 (\gamma_q)$ there
exists an $\alpha_2\in$  $ \rm{Aut}^k_{hol}(\C^n)$ such that
$\alpha_2(w,z)$ is close to the identity for $(w,z)\in K$   and
$|\pi_2(\alpha_2\circ \alpha_1(w,q(w)))|>R$ for every $w\in
\overline \Delta$. Furthermore we have $\alpha_2\circ
\alpha_1(w,b_i(w))=(w,(i,0,\ldots ,0))$ for every $w\in \C^k$.
Moreover $\alpha_2\circ \alpha_1(K)\subset R\B_n$. To be able to
apply Lemma \ref{te1.1} we perturb $\alpha_2\circ
\alpha_1(w,p(w))$ slightly to make sure that the last $n-1$
coordinate functions have no common zero on $\C^k$, at the same
time fixing the points $(w,(i,0,\ldots ,0))$, $i=1,\ldots ,N$.
This is possible by transversality and our dimension assumption
\eqref{dim}, i.e. $\dim X +k<n$. Now an application of Lemma \ref{te1.1} gives an
automorphism $\alpha_3 $ arbitrarily close to identity on
$\overline \Delta \times R \B_n$ fixing the first $N$ standard
points, such that $\alpha_3 \circ \alpha_2\circ
\alpha_1(w,p(w))=(w,2R,1,0,\ldots ,0)$ for every $w\in \C^k$.

By another application of the same lemma, we can in addition assume
that $\alpha_3 \circ \alpha_2\circ
\alpha_1(w,q(w))=(w,2R,2,0,\ldots ,0)$ for every $w\in \C^k$.

Finally we take a polynomial $\tilde P(z_1)$ such that $\tilde
P(2R)=1$,   $| \tilde P(z_1)|< \epsilon $ for $z_1\in
(R+1)\overline \B_1$ and $\tilde P(z_1)=0$ to order $l$, for
$z_1\in \{ 1,\ldots N\}$. Then define the automorphism
$$\alpha_4(w,z)=(w,z_1,z_2+ \tilde P(z_1),z_3, \ldots, z_n).$$
Consequently
$$\alpha_4 (w,i,0,\ldots ,0)=(w,i,0,\ldots ,0)$$
to prescribed order for $i=1,\ldots N$ and $\alpha_4 \circ
\alpha_3 \circ \alpha_2 \circ \alpha_1(w,p(w))=\alpha_3 \circ
\alpha_2 \circ \alpha_1(w,q(w))$.

In total,  the automorphism
$$\alpha =\alpha_1^{-1}\circ \alpha_2^{-1}\circ \alpha_3^{-1}\circ \alpha_4 \circ \alpha_3 \circ \alpha_2 \circ \alpha_1,$$  will have the properties stated in the lemma.

\end{proof}

%%%%%%%%%%%%%%%%%%%%%%%%%%%%%%%%%%%%%%%%%%%%%%%%%%%%%%%%%%%%%%%%%%%%%%%%%%%%%%%%%%%%%%%%%%%%%%%%%%%%%

\subsection{Proof of osculation lemma}

Remember that we will mark a finite number of points. The points will be used to exclude affine automorphisms in the main theorem.

Before we come to this point, a standard jet interpolation lemma in a parametrized form is established. This form is quite easy, since we do assume that the linear part of each prescribed 
jet is the identity. For general linear parts one would need (in order to get the linear part from shears)  to write a holomorphic (depending on the parameter in $ \C^k$) invertible matrix as a product of holomorphic elementary matrices. This is the so called Vaserstein problem posed by Gromov in \cite{Gro}. Although it was recently solved by Ivarsson and the first author  \cite{IK}, \cite{IK1} we will restrict ourselves to the present simple version of our lemma since it is fully sufficient for the purpose of the present paper.

\begin{lemma}\label{prekiss}
Let $\xi_i=(i,i,\ldots ,i)\in \C^n$ for $i=0,1,2,\ldots ,N$ and
let $P_i:\C^k\times \C^n\to \C^n$ be polynomial maps of degree $s$
such that
$$P_i(w,z)=\xi_i+(z-\xi_i )+O(|z-\xi_i|^2)$$
for $z\to \xi_i$ and all $w\in\C^k$.

Then there exists $\kappa \in \rm{Aut}_{hol}^k(\C^n)$ such that
$$\pi_2(\kappa (w,z)-(w,P_i(w,z)))=O(|z-\xi_i|^{s+1})$$
for $z\to \xi_i$ with $i=0,1,2,\ldots ,N$.
\end{lemma}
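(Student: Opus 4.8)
The plan is to match the prescribed $s$-jets one homogeneous degree at a time, by induction on the order $m=2,3,\ldots,s$; the orders $0$ and $1$ are already matched by $\kappa=\mathrm{id}$, since each $P_i$ fixes $\xi_i$ with identity differential. At stage $m$ I assume an automorphism $\kappa^{(m-1)}\in\mathrm{Aut}^k_{hol}(\C^n)$ has been produced whose $\C^n$-jet at every $\xi_i$ agrees with that of $P_i(w,\cdot)$ through order $m-1$, for all $w$. The remaining error at the next order is, for each $i$, a degree-$m$ homogeneous polynomial map $D_{i,m}(w,\cdot)$ in $(z-\xi_i)$, depending polynomially on $w$ because every ingredient does. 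It then suffices to produce $\tau_m\in\mathrm{Aut}^k_{hol}(\C^n)$ that is the identity to order $m-1$ at every $\xi_i$ and whose order-$m$ part at $\xi_i$ equals $D_{i,m}$; setting $\kappa^{(m)}=\kappa^{(m-1)}\circ\tau_m$ then matches through order $m$, because for maps with identity linear part the order-$m$ part of a composition is the sum of the order-$m$ parts of the factors.

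To build $\tau_m$ I first localize. Since the points $\xi_i=(i,\ldots,i)$ are separated by every coordinate, Hermite interpolation in a single coordinate $z_{j'}$ furnishes polynomials $L_i$ with $L_i=1$ to order $m$ at $z_{j'}=i$ and $L_i=O(|z_{j'}-i'|^{m+1})$ at $z_{j'}=i'$ for $i'\neq i$. Multiplying the defining function of a shear or overshear in a direction $j\neq j'$ by $L_i$ keeps it a legitimate shear/overshear (it introduces no dependence on $z_j$, and such a $j'$ exists because $n\ge 2$), leaves its order-$m$ part at $\xi_i$ unchanged, and kills its jet to order $m$ at every other point. Summing over $i$, the task reduces to realizing, at one point $\xi_i$, a single prescribed degree-$m$ homogeneous vector correction by an automorphism that is the identity to order $m-1$ there.

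The realization of one homogeneous correction is where the only real content lies. Writing $D_{i,m}=\sum_{j=1}^n\sum_{|\alpha|=m}c_{\alpha,j}(w)\,(z-\xi_i)^\alpha e_j$, the monomials with $\alpha_j=0$ are exactly order-$m$ parts of shears in the direction $e_j$, and those with $\alpha_j=1$ arise from overshears in the direction $e_j$; both are globally integrable, hence genuine elements of $\mathrm{Aut}^k_{hol}(\C^n)$, and a complete field vanishing to order $m-1$ has time-one flow equal to $\mathrm{id}$ plus its order-$m$ part modulo $O(|z-\xi_i|^{m+1})$, so its order-$m$ jet is realized exactly. The remaining monomials, with $\alpha_j\ge 2$, are not flows of a single shear; here I invoke the Anders\'en--Lempert observation recorded above --- every polynomial vector field on $\C^n$ ($n\ge 2$) is a finite Lie combination of complete shear and overshear fields --- and realize the corresponding correction as the order-$m$ part of a suitable composition of shear/overshear flows (for instance $z_1^2e_1$ is produced, modulo higher order, by composing the shear $z_2\mapsto z_2+z_1^2$ with the shear $z_1\mapsto z_1+z_2$). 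All coefficients and interpolation data are polynomial in $w$, so each factor lies in $\mathrm{Aut}^k_{hol}(\C^n)$ and so does the final $\kappa=\kappa^{(s)}$.

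The main obstacle is precisely this last point: controlling the ``diagonal'' monomials $(z-\xi_i)^\alpha e_j$ with $\alpha_j\ge 2$, i.e.\ arranging a finite composition of shear and overshear flows whose order-$m$ jet at $\xi_i$ is exactly prescribed while its jet to order $m-1$ there stays trivial, and checking that the bookkeeping keeps the whole construction polynomial in $w$. The simplification emphasized in the paper --- that every prescribed jet has identity linear part --- is what keeps even this manageable, since it removes any need to realize a nontrivial linear part by elementary (shear) matrices, which is the genuinely hard Vaserstein/$K_1$-type problem alluded to just before the lemma.
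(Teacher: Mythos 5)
Your skeleton (induction on the jet order, localization at the separated points $\xi_i$ via one-variable Hermite factors, additivity of the order-$m$ parts for maps that are the identity to order $m-1$) is the same as the paper's, which follows Step 2.10 of Forstneri\v c. But the step you yourself identify as "where the only real content lies" --- realizing the diagonal monomials $(z-\xi_i)^\alpha e_j$ with $\alpha_j\ge 2$ --- is not actually carried out, and the one concrete example you offer is wrong. Composing $\sigma(z_1,z_2)=(z_1,z_2+z_1^2)$ with $\tau(z_1,z_2)=(z_1+z_2,z_2)$ gives $\tau\circ\sigma(z)=(z_1+z_2+z_1^2,\,z_2+z_1^2)$, whose linear part is $\tau\neq\mathrm{id}$ and whose quadratic part is $(z_1^2,z_1^2)$, not $z_1^2e_1$; so it violates exactly the "identity to order $m-1$" requirement on which your additivity argument rests. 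More fundamentally, the Anders\'en--Lempert observation you invoke expresses a field as a finite \emph{Lie} combination of complete fields, and any bracket that produces a homogeneous field of degree $m$ must involve factors of lower vanishing order (a bracket of two fields vanishing to orders $p,q$ vanishes to order $p+q-1$); the flows of those factors perturb the jet below order $m$, so "the order-$m$ part of a suitable composition" is not a sum of prescribed pieces any more, and turning the Lie-combination statement into an exact jet statement would require a separate iterative correction argument that you do not supply.

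The paper sidesteps brackets entirely. Its Lemma on bases of $S^m((\C^n)^*)\otimes\C^n$ (Lemma \ref{basis}, proved by decomposing this space into two irreducible $\Gl_n$-modules, the kernel and a complement of the divergence map) shows that \emph{every} homogeneous polynomial map of degree $m$ is a plain linear combination, with coefficients that can be taken holomorphic in $w$, of shear monomials $(\lambda(z))^m v$ and overshear monomials $(\lambda(z))^{m-1}\langle z,v\rangle v$ with $\lambda(v)=0$ for suitably \emph{rotated} pairs $(\lambda,v)$ --- not just coordinate directions. Each such monomial is the exact order-$m$ part of the time-one flow of a single complete shear or overshear field that is the identity to order $m-1$ at the point, so the composition of these flows realizes the prescribed jet with no bracket bookkeeping. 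Your restriction to coordinate shears and overshears ($\alpha_j\in\{0,1\}$) spans only a proper subspace, which is precisely why the diagonal monomials resist your construction; the missing ingredient is this spanning statement for rotated shear directions, and it is the genuine content of the paper's proof.
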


\begin{notation}
The reason that we have chosen $\xi_i=(i,i,\ldots ,i)\in \C^n$ for
$i=0,1,2,\ldots , N$ is to ensure that our points  have pairwise different projections along all coordinate directions.  This is suitable for the use of shears.\end{notation}

\begin{proof}

The proof goes exactly as in the non-parametric case (see Step 2.10 in Forstneric \cite{F}) by induction 
over the number of points and the order of the jets. The beginning step of the induction (first order) is empty in our case.
To realize  the homogeneous part $P^j$ of order $j \ge 2$  of a polynomial map $P$ by a composition of overshears on $\C^n$ depending holomorphically on the parameter on $w$ we need to establish the existence of finitely many linear functionals $\lambda_i$ together with vectors $v_i$ having the properties
$\lambda_i (v_i) = 0$ and $|v_i| = 1$ such that

\begin{equation}\label{vsum1}
\begin{split}
P^j (w,z) &=\sum_ic_i(w)(\lambda_i (z-\xi_1))^jv_i   + \\  &+\sum_k d_k(w)(\lambda_k (z-\xi_1))^{j-1}\langle
z-\xi_1,v_k\rangle v_k
\end{split}
\end{equation}

%\begin{equation}\label{vsum1}
%P^j (w,z)=\sum_ic_i(w)(e^i(z-\xi_1))^jv_i+d_i(w)(e^i(z-\xi_1))^{j-1}\langle
%z-\xi_1,v_i\rangle v_i
%\end{equation}
with holomorphic functions $c_i,d_i \in \hol (\C^k)$.

This follows from the  purely algebraic fact, Lemma \ref{basis} below.

\end{proof}

We denote the complex vector space $\C^n$ by $V$, $S^k (V^*)$ denotes the vector space of homogenous polynomials of
degree $k$ on $V$ and $r_{k,n} =  {n+k-1 \choose n-1}$ its dimension.

\begin{lemma}
There exist $r_{k,n}$ linear forms $\lambda_1, \lambda_2, \ldots, \lambda_{r_{k,n}} \in (\C^n)^*$ such that the homogenous
polynomials $(\lambda_i)^k$ of degree $k$ $i=1, 2, \ldots, r_{k,n}$ form a basis of $S^k (V^*)$. Moreover the $\lambda_i$
can be chosen from any nonempty open subset $\Omega$ of $(\C^n)^*$.
\label{l1}
\end{lemma}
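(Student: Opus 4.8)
The plan is to establish first the \emph{spanning} statement---that the $k$-th powers of linear forms span all of $S^k(V^*)$---and then to upgrade a spanning subset to a basis whose members lie in the prescribed open set $\Omega$, by a genericity argument.

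For the spanning, I would argue by duality. Fix the monomial basis $\{z^\alpha : |\alpha| = k\}$ of $S^k(V^*)$. A linear form $\lambda = \sum_{i=1}^n a_i z_i$ satisfies
\[
\lambda^k = \sum_{|\alpha|=k} \binom{k}{\alpha}\, a^\alpha z^\alpha, \qquad \binom{k}{\alpha} = \frac{k!}{\alpha_1! \cdots \alpha_n!}.
\]
Suppose a linear functional $L$ on $S^k(V^*)$ vanishes on every power $\lambda^k$. Writing $c_\alpha = L(z^\alpha)$, the hypothesis reads
\[
L(\lambda^k) = \sum_{|\alpha|=k} \binom{k}{\alpha}\, c_\alpha\, a^\alpha = 0 \quad \text{for all } a \in \C^n.
\]
The left-hand side is a homogeneous polynomial of degree $k$ in the variables $a_1,\dots,a_n$; since it vanishes identically, each coefficient $\binom{k}{\alpha} c_\alpha$ is zero, and as $\binom{k}{\alpha} \neq 0$ this forces $c_\alpha = 0$ for all $\alpha$, i.e. $L = 0$. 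Hence the powers $\{\lambda^k : \lambda \in (\C^n)^*\}$ span $S^k(V^*)$, so one may select $r_{k,n}$ of them forming a basis.

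For the \emph{moreover} clause, I would consider the map $((\C^n)^*)^{r_{k,n}} \to \C$ sending a tuple $(\lambda_1, \ldots, \lambda_{r_{k,n}})$ to the determinant $D(\lambda_1, \ldots, \lambda_{r_{k,n}})$ of the coordinate matrix of $(\lambda_1^k, \ldots, \lambda_{r_{k,n}}^k)$ written in the monomial basis. This $D$ is a polynomial in the entries of the $\lambda_i$, and by the spanning step it is \emph{not} identically zero, since some choice of $\lambda_i$ yields a basis. Its non-vanishing locus is therefore a nonempty Zariski-open, hence dense, subset of $((\C^n)^*)^{r_{k,n}}$, and on it the family $(\lambda_i^k)$ is a basis. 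As $\Omega$ is a nonempty open subset of $(\C^n)^*$, the product $\Omega^{r_{k,n}}$ is a nonempty open subset of $((\C^n)^*)^{r_{k,n}}$; since a nonzero polynomial cannot vanish on a nonempty open set, $D$ is nonzero at some point of $\Omega^{r_{k,n}}$, which furnishes the required linear forms $\lambda_i \in \Omega$.

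The substantive point is the spanning statement, which is the classical fact that powers of linear forms span the symmetric power; the duality computation above makes it self-contained. Once this is in hand, the only care needed is the observation that ``$(\lambda_i^k)$ is a basis'' is a Zariski-open condition cut out by a non-trivial polynomial, so the freedom to place the $\lambda_i$ inside an arbitrary open $\Omega$ follows from the density of a nonempty Zariski-open set.
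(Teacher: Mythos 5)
Your proof is correct, but it takes a genuinely different route from the paper's. You split the statement into two steps: spanning, proved by a duality computation (a linear functional $L$ with $L(\lambda^k)=\sum_{|\alpha|=k}\binom{k}{\alpha}c_\alpha a^\alpha\equiv 0$ forces all $c_\alpha=0$ since the multinomial coefficients are nonzero), and then the localization to $\Omega$, proved by observing that the determinant $D$ of the coordinate matrix of $(\lambda_1^k,\dots,\lambda_{r_{k,n}}^k)$ is a not-identically-zero polynomial and hence cannot vanish on the nonempty open set $\Omega^{r_{k,n}}$. The paper instead argues representation-theoretically: the map $\lambda\mapsto\lambda^k$ is $\Gl(V)$-equivariant, $S^k(V^*)$ is an irreducible $\Gl(V)$-module, so the span of the orbit through any nonzero $\lambda_0$ is everything, and by the identity theorem the span of any \emph{open piece} of the orbit is already everything --- which handles the spanning and the ``$\lambda_i\in\Omega$'' clause in one stroke. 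Your argument is more elementary and fully self-contained, at the cost of relying on the explicit monomial expansion of $\lambda^k$; the paper's framing pays off in the subsequent Lemma on bases of $S^k(V^*)\otimes V$, where the same orbit-span argument is applied verbatim to the irreducible summand $\ker(\div)$, a space for which an explicit coordinate computation in the style of your duality step would be considerably less pleasant. Both proofs ultimately invoke the identity theorem (yours for the polynomial $D$ on $\Omega^{r_{k,n}}$, the paper's for the holomorphic orbit map), so neither is logically lighter; the difference is purely one of packaging.
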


\begin{proof} Take any nonzero element $\lambda_0 \in \Omega \in V^*$.
The map $V^* \to S^k (V^*)$ defined by $\lambda \mapsto \lambda^k$
is $\Gl (V)$-equivariant and since $S^k (V^*)$ is an irreducible
$\Gl (V)$-module the linear span of the $\Gl (V)$-orbit through
$\lambda_0$ $$\spa \{ (g\cdot \lambda_0)^k, \quad g\in \Gl (V)\}$$
is the whole module $S^k (V^*)$. The same holds for any open part
of the orbit, i.e.
$$\spa \{ (g\cdot \lambda_0)^k,\quad g\in U\} = S^k (V^*)$$
for any open subset $U$ of $\Gl (V)$, since if the left hand side would be contained in some proper linear subspace
$W \subset S^k (V^*)$ then by the identity theorem for holomorphic mappings the whole orbit would be contained in $W$
contradicting the irreducibility of  $S^k (V^*)$. We can therefore find $r_{k,n}$ group elements $g_1, g_2, \ldots, g_{r_{k,n}}$
$\in \Gl (V)$ contained in some open neighborhood $U$ of the identity (with $U\cdot \lambda_0 \subset \Omega$) such that
the homogenous polynomials $(g_j \cdot \lambda_0)^k$ $k=1, 2, \ldots, r_{k,n}$ form  a basis of $S^k (V^*)$.
\end{proof}

\begin{lemma}
There exist $n \cdot {n+k-2 \choose n-1}- {n+k-2 \choose n-1}$ linear forms $\lambda_i \in (\C^n)^*$ and vectors $v_i\in \C^n$ with $\lambda_i (v_i)=0$ and
$\Vert v_i\Vert =1 $ $i=1, 2, \ldots,  {n+k-2 \choose n-1}$ together with ${n+k-2 \choose n-1}$
linear forms $\tilde\lambda_j \in (\C^n)^*$ and  vectors $w_j \in \C^n$ with
$\lambda_j (w_j)=0$ and
$\Vert w_j\Vert =1 $ $j=1, 2, \ldots,  n \cdot {n+k-2 \choose n-1}- {n+k-2 \choose n-1}$ such that the homogenous
polynomial maps

$$z\mapsto (\lambda_i (z))^k v_i, \quad  i=1, 2, \ldots, n \cdot {n+k-1 \choose n-1}- {n+k-2 \choose n-1}$$

of degree $k$ together with the homogenous polynomial maps

$$ z\mapsto (\tilde\lambda_j (z))^{k-1} \langle z, w_j\rangle w_j, \quad j=1, 2, \ldots, {n+k-2 \choose n-1}$$

of degree $k$ form a basis of the vector space $V_k \cong  S^k ((\C^n)^*) \otimes \C^n$ of homogenous polynomial maps
of degree $k$. Moreover if $v_0 \in \C^n$ and a non-zero functional $\lambda_0 \in (\C^n)^*$ with $\lambda_0 (v_0)=0$ and
$\Vert v_0\Vert =1 $ and a number $\epsilon >0$ are given, then the vectors $v_i, w_j$ together with the functionals $\lambda_i,\tilde \lambda_j$ can be chosen with
$\Vert v_0 -v_i\Vert  <\epsilon$ $\Vert v_0 -w_j\Vert  <\epsilon$ and $\Vert \lambda_0 - \lambda_i \Vert <\epsilon$, $\Vert \lambda_0 -\tilde \lambda_j \Vert <\epsilon$. 
\label{basis}
\end{lemma}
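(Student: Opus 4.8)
The plan is to recognize the two families of maps as the degree-$k$ homogeneous parts of shear and overshear vector fields, and to organize everything around the divergence operator. Write $V=\C^n$ with its Hermitian inner product, and let $V_k\cong S^k(V^*)\otimes V$ be the space of homogeneous degree-$k$ vector fields on $\C^n$, a pure tensor $P\otimes v$ being the field $z\mapsto P(z)\,v$. Let $\div\colon V_k\to S^{k-1}(V^*)$ be the canonical $\Gl(V)$-equivariant divergence, $\div(P\otimes v)=\partial_v P$. I would first record the two computations that drive the proof: for $\lambda(v)=0$ the shear field $s_{\lambda,v}:=\lambda^k\otimes v$ satisfies $\div(s_{\lambda,v})=k\,\lambda^{k-1}\lambda(v)=0$, while for $\tilde\lambda(w)=0$, $\Vert w\Vert=1$ the overshear field $o_{\tilde\lambda,w}:=\big(\tilde\lambda^{\,k-1}\langle\,\cdot\,,w\rangle\big)\otimes w$ satisfies $\div(o_{\tilde\lambda,w})=\Vert w\Vert^2\,\tilde\lambda^{\,k-1}=\tilde\lambda^{\,k-1}$.

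The structural input is that, by the Pieri rule, $S^k(V^*)\otimes V$ has exactly two irreducible $\Gl(V)$-constituents: a copy of $S^{k-1}(V^*)$ (highest weight $(0,\dots,0,-(k-1))$) and a complementary irreducible module $W$ (highest weight $(1,0,\dots,0,-k)$). Since the overshear computation exhibits $\div$ as a nonzero equivariant map into the irreducible $S^{k-1}(V^*)$, it is surjective, and $W=\Ker\div$ is irreducible of dimension $n\,r_{k,n}-r_{k-1,n}=n\binom{n+k-1}{n-1}-\binom{n+k-2}{n-1}$ --- exactly the number of shears in the statement. Now the set of shears is $\Gl(V)$-invariant (the action sends $\lambda^k\otimes v$ to $((g^{-1})^*\lambda)^k\otimes gv$ and preserves $\lambda(v)=0$) and lies in $\Ker\div$, so its linear span is a nonzero submodule of the irreducible $\Ker\div$ and therefore equals $\Ker\div$; this lets me extract $n\binom{n+k-1}{n-1}-\binom{n+k-2}{n-1}$ shears forming a basis of $\Ker\div$. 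For the quotient I apply Lemma \ref{l1} in degree $k-1$ to obtain $r_{k-1,n}=\binom{n+k-2}{n-1}$ forms $\tilde\lambda_j$ with $\{\tilde\lambda_j^{\,k-1}\}$ a basis of $S^{k-1}(V^*)$; choosing $w_j$ with $\tilde\lambda_j(w_j)=0$, $\Vert w_j\Vert=1$, the overshears $o_{\tilde\lambda_j,w_j}$ map under $\div$ to this basis, hence are independent modulo $\Ker\div$. Together the shears and overshears form $\dim V_k$ vectors spanning $V_k$, so they are a basis.

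For the "moreover" I localize both constructions near $(\lambda_0,v_0)$. For the shears, consider the incidence variety $\Sigma=\{(\lambda,v)\in V^*\times V:\lambda(v)=0\}$, the zero set of the irreducible bilinear polynomial $\lambda(v)$, hence an irreducible affine variety whose smooth locus is connected and contains $(\lambda_0,v_0)$ (as $\lambda_0\ne 0\ne v_0$). The holomorphic map $(\lambda,v)\mapsto\lambda^k\otimes v$ has image spanning $\Ker\div$, so by the identity theorem --- exactly as in the proof of Lemma \ref{l1} --- its restriction to any neighborhood of $(\lambda_0,v_0)$ in $\Sigma$ still spans $\Ker\div$; I select a basis from such local shears and normalize each $v_i$ to unit length (rescaling $v_i$ only scales the shear, preserving both the line it spans and the relation $\lambda_i(v_i)=0$). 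For the overshears I invoke the openness clause of Lemma \ref{l1} to place the $\tilde\lambda_j$ inside a prescribed neighborhood of $\lambda_0$, then solve $\tilde\lambda_j(w_j)=0$ for $w_j$ near $v_0$ (possible since $\lambda_0(v_0)=0$) and normalize. The main obstacle is the representation-theoretic step establishing that $\Ker\div$ is irreducible --- equivalently, that the divergence-free homogeneous degree-$k$ fields are spanned by shears; once that is in hand, everything else is dimension bookkeeping and a routine identity-theorem and normalization argument parallel to Lemma \ref{l1}.
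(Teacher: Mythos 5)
Your proposal is correct and follows essentially the same route as the paper: both rest on the decomposition of $S^k((\C^n)^*)\otimes\C^n$ into the two irreducible constituents $\Ker\div$ and a complement isomorphic to $S^{k-1}((\C^n)^*)$, use Lemma \ref{l1} together with the overshear divergence computation to build the complementary piece, and use irreducibility of $\Ker\div$ plus an identity-theorem argument to extract a basis of shears with $(\lambda_i,v_i)$ arbitrarily close to $(\lambda_0,v_0)$. The only cosmetic difference is that you run the identity theorem on the incidence variety $\{\lambda(v)=0\}$, whereas the paper runs it on the $\Gl(V)$-orbit of $\lambda_0^k\otimes v_0$ through group elements near the identity.
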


\begin{proof}
Set $V=\C^n$. The vector space $V_k \cong S^k (V^* ) \otimes V$ is as $\Gl (V)$-module isomorphic to the direct sum
of two irreducible representations $W_1 \oplus W_2$, where $W_1$ is isomorphic to $S^{k-1} (V^* )$ and $W_2$ is
isomorphic to the kernel of the $\Gl (V)$-equivariant map
$$\psi : S^k (V^*) \otimes V \to S^{k-1} (V^*), \quad \quad X\mapsto \div X .$$

We will provide some ``section'' of $\psi$:
\smallskip\noindent
By Lemma \ref{l1} there exist $\tilde\lambda_j \in V^*$ $j=1, 2, \ldots, {n+k-2 \choose n-1}$ ($\epsilon$-near to $\lambda_0$ if desired) such that the
homogenous polynomials $\tilde\lambda_i^{k-1}$ form a basis of $S^{k-1} (V^*)$. Choose vectors $w_j$ with
$\tilde\lambda_j (w_j) = 0$ and $\Vert w_j \Vert =1$ (and $\epsilon$-near to $v_0$ if desired). For a homogenous polynomial $p(z)$ of degree $k-1$ we write
it in the basis $p(z) = \sum_{j=1}^{n+k-2 \choose n-1} d_j (\tilde\lambda_j(z))^{k-1}$ and define the section
$s(p)$ by

$$s(p) (z) = \sum_{j=1}^ {n+k-2 \choose n-1} d_j (\tilde\lambda_j(z))^{k-1} \langle z, w_j\rangle w_j.$$

An easy calculation shows $\psi (s(p))) = \div s(p) = p$. Thus the homogenous polynomial maps
$ z \mapsto (\tilde\lambda_j(z))^{k-1} \langle z, w_j\rangle w_j$ $j= 1, 2, \ldots,{n+k-2 \choose n-1}$ form
a basis of some linear subspace of $S^k (V^*) \otimes V$ complementary to the kernel of $\psi$.
\smallskip
Now take our nonzero linear functional $\lambda_0 \in V^*$ and some vector $v_0 \in V$ with
$\lambda_0 (v_0) = 0$ and $\Vert v_0 \Vert = 1$.
Since $\ker \psi \cong W_2$ is an irreducible $\Gl (V)$-module the linear span of any $\Gl (V)$-orbit
through a nonzero point in $W_2$ is the whole vector space $W_2 \cong \ker \psi$. Since $\lambda_0 (v_0) =0$
the element $\lambda_0^k \otimes v_0$ is such a point and like in the proof of Lemma \ref{l1} we find
group elements $g_1, g_2, \ldots , g_{  n \cdot {n+k-1 \choose n-1}- {n+k-2 \choose n-1}}\in \Gl (V)$ contained
in any given nonempty open neighborhood of the identity element such that the homogenous polynomial maps
$g_j \cdot (\lambda_0^k \otimes v_0) = (g_j \cdot \lambda_0) g_j\cdot v_0$
form a basis of $\ker \psi$. Defining $\\lambda_i = g_i \cdot \lambda_0$ (remember $g \cdot \lambda_0 (v) :=
\lambda_0 ( g^{-1} v)$) and $\tilde v_i = g_i \cdot v_0, v_i= \frac { \tilde v_i}{\vert \tilde v_i \vert}$ (instead of normalizing we could have chosen the $g_j$ from the unitary group since by the identity principle in complex analysis any irreducible $\Gl_n$-representation is $\rm{U}_n$-invariant) we get a basis $\lambda_i^k \otimes v_i$ $i=1, 2, \ldots,
n \cdot {n+k-1 \choose n-1}- {n+k-2 \choose n-1}$ of $\ker \psi$. Together with the above constructed
basis of the complementary subspace it forms a basis of the vector space $V_k \cong  S^k ((\C^n)^*) \otimes \C^n$ of homogenous polynomial maps
of degree $k$.
\end{proof}

Using these prerequisits we give the proof of Lemma \ref{KYSSLEMMAT}

\begin{proof}[proof of Lemma  \ref{KYSSLEMMAT}]
Since the points $\xi_1(w),\ldots ,\xi_t(w)$ are simultaneously
standardizable we can assume that $\xi_i(t)=(i,i,\ldots ,i)$ for
$1\leq i \leq t$. Now we want to apply Lemma \ref{prekiss} to make
$\kappa (M(w))$ osculating of order $l$. This means we have to
ensure that there is for each $i$ a holomorphically depending on
$w\in \C^k$ polynomial map $P_i(w,z)$ with
\begin{equation}\label{jetform}
P_i(w,z)=\xi_i+(z-\xi_i )+O(|z-\xi_i|^2).
\end{equation}
We then get osculation of order $l$ at $\xi_i(w)$ after applying
an automorphism with this prescribed jet at $\xi_i(w)$.

For a given point $x$ in a submanifold $M$ of $\C^n$ the set
$P(x,M)$ of $l$-jets of the form \eqref{jetform} ensuring the
osculation up to order $l$ is biholomorphic to a vectorspace and
the change of variables in the jet-bundle is affine-linear. This
means we have to find a holomorphic section in a locally trivial
fibration over $\C^k$ of the form
$$\begin{matrix}
\bigcup_{w\in \C^k}P(\xi_i(w),M(w))\\
\downarrow \\
\C^k
\end{matrix}$$
where the fibers are biholomorphic to a vectorspace $\C^N$ and the
structure group is $\mathrm{Aff}(\C^N)$, the group of affine
linear automorphisms of $\C^N$. Since $\C^N$ is contractible a
continuous section always exists and the Oka-Grauert principle (Theorem \ref{oka})
implies the existence of a holomorphic section.  
\end{proof}

%%%%%%%%%%%%%%%%%%%%%%%%%%%%%%%%%%%%%%%%%%%%%%%%%%%%%%%%%%%%%%%%%%%%%%%%%%%%%%%%%%%%%%%%%%%%%%%%%%%

\subsection{The proof of Lemma \ref{AKlemma}}

To prove Lemma \ref{AKlemma} we use the following sublemma:
\begin{lemma}\label{sublemma}
For every point $p=(w_0,\hat p)\in K_M=K_1\times K_2$ there is an
open neighborhood $V_p\times U_p\ni p$ in $K_1\times M$ and a
family of automorphisms $\psi_t$ of $\C^k\times \C^n$ parametrized
by $\C^{N}$, where $N=N(m,n,s)=\paren{ \binom{m+s}{m}-(m+1)}(n-m)$, such that
\begin{enumerate}[1.]
\item $\psi_0=\rm{Id}$. \item Every $\psi_t$ satisfies 1. and 2.
in Lemma \ref{AKlemma}. \item There exist an open neighborhood $T$
of $0$ in $\C^N$ such that
\begin{multline*}
\Sigma =\{ t\in T: \textrm{ There exists } p'\in V_p\times U_p \textrm{ such that }\psi_t(w,M(w))\\
\textrm{ osculates of order $l$ in } \psi_t(p') \} %\textrm{ for any } w\in V^p\}
\end{multline*}
is a set of Lebesgue measure zero.
\end{enumerate}
\end{lemma}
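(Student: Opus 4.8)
The plan is to argue entirely locally near the base point $p=(w_0,\hat p)$, to rewrite ``osculation of order $l$'' as the vanishing of a holomorphic map into $\C^N$, and then to construct the family $\psi_t$ so that the parameter $t$ moves this map isomorphically; the measure-zero statement then follows from a dimension count. After conjugating by a fixed element of $\aut^k_{hol}(\C^n)$ I would assume $\hat p=0$, $w_0=0$ and $T_0M(0)=\C^m\times\{0\}\subset\C^m_{z'}\times\C^{n-m}_{z''}$. For $(w',x')$ in a small neighbourhood $V_p\times U_p$ of $p$ the piece of $M(w')$ near $x'$ is a graph $z''=g_{w'}(z')$ over an affine space close to $\C^m\times\{0\}$, and by the coordinate description of osculation recalled after its definition, $M(w')$ osculates of order $l$ at $x'$ exactly when the normal components of the Taylor coefficients of $g_{w'}$ at $x'$ of orders $2,\dots,l$ all vanish. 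Collecting these coefficients gives a holomorphic defect map $D\colon V_p\times U_p\to\C^N$, where $N$ is taken with $s=l$, so that $N=(\binom{m+l}{m}-(m+1))(n-m)$ is exactly the number of these coefficients; osculation of order $l$ of $M(w')$ at $x'$ is then the single equation $D(w',x')=0$.

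To build $\psi_t$ I would use the shear/overshear basis furnished by Lemma \ref{basis}: for $2\le|\alpha|\le l$ and a basis $e''_\beta$ of the normal space $\{0\}\times\C^{n-m}$, each normal-valued monomial $z\mapsto(z')^\alpha e''_\beta$ is of shear type $(\lambda(z))^{|\alpha|}v$ with $\lambda$ a functional in the $z'$-variables and $v=e''_\beta$, so that $\lambda(v)=0$ and the monomial is the jet of a completely integrable shear. There are exactly $N$ such monomials. Applying the parametrized jet interpolation of Lemma \ref{prekiss} (after simultaneously standardising the finitely many distinct parametrized points $\hat p,a_1,\dots,a_r,b_1(w),\dots,b_q(w)$, legitimate by Proposition \ref{standp} since $k<n-1$ follows from $m+k<n$) I would obtain, for $t=(t_{\alpha,\beta})\in\C^N$, an automorphism $\psi_t\in\aut^k_{hol}(\C^n)$ depending holomorphically (in fact linearly) on $t$, with $\psi_0=\mathrm{Id}$, whose fibre $(\psi_t)_{w_0}$ has $l$-jet $z\mapsto z+\sum_{\alpha,\beta}t_{\alpha,\beta}(z')^\alpha e''_\beta$ at $\hat p$, while $(\psi_t)_w$ has the identity $l$-jet at every $b_j(w)$ and fixes every $a_i$. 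The prescriptions are compatible because $\hat p\ne b_j(w)$ (as $b_j\notin K_M\ni p$), and if some $a_i$ coincides with $\hat p$ its value is automatically fixed since the perturbation has order $\ge 2$. Thus every $\psi_t$ satisfies properties $1$ and $2$ of Lemma \ref{AKlemma}, yielding properties $1$ and $2$ of the sublemma.

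For the last property I would form the universal defect $\hat D\colon(V_p\times U_p)\times\C^N\to\C^N$, sending $(w',x',t)$ to the order-$l$ normal defect of $(\psi_t)_{w'}(M(w'))$ at $(\psi_t)_{w'}(x')$. Since adding the normal-valued polynomial $\sum t_{\alpha,\beta}(z')^\alpha e''_\beta$ to the graph $z''=g_{w_0}(z')$ shifts its normal Taylor coefficients of orders $2,\dots,l$ by $t$ and leaves the tangent plane at the base point unchanged (the perturbation has order $\ge 2$), the partial derivative $\partial\hat D/\partial t$ at $(p,0)$ is an isomorphism; by holomorphy it stays invertible for $(w',x')$ near $p$ and $t$ near $0$, so after shrinking $V_p,U_p$ and choosing $T\ni 0$ the map $\hat D$ is a submersion on $(V_p\times U_p)\times T$. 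Hence $Z:=\hat D^{-1}(0)\cap((V_p\times U_p)\times T)$ is a complex submanifold of dimension $(k+m+N)-N=k+m$, and $\Sigma$ is exactly the image of $Z$ under the projection to $T\subset\C^N$. Under the running dimension assumptions of the construction ($m+k<n$, together with the choice of $l$ large enough that $(\binom{m+l}{m}-(m+1))(n-m)>m+k$) we have $k+m<N$, so $\dim_\R Z=2(k+m)<2N$ and the holomorphic image $\Sigma$ of the lower-dimensional manifold $Z$ has Lebesgue measure zero, which is property $3$.

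The main obstacle is the middle step: producing a single parametrized automorphism that realises the full $N$-dimensional family of normal-jet perturbations at $\hat p$ holomorphically in $t$ while simultaneously leaving the $l$-jets at the $b_j$ and the values at the $a_i$ untouched, and then verifying that the induced action on the finite-order osculation data is an isomorphism, so that $\hat D$ is a submersion. Once that submersion is in hand, the transversality/measure-zero conclusion is the routine dimension count above.
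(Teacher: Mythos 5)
Your overall strategy coincides with the paper's: encode order-$l$ osculation as the vanishing of a holomorphic defect map into $\C^N$, construct an $N$-parameter family $\psi_t$ whose $t$-derivative acts as the identity on the osculation data at $p$, and finish by transversality plus a dimension count. Your graph-coefficient defect map is equivalent to the determinant criterion $F^{\psi}_{\alpha,u}$ used in the paper, and your final dimension count is actually more careful than the printed one (the paper writes ``$m<N$'' where the relevant inequality is $k+m<N$, and you correctly note that $l$ must be chosen large enough for this to hold in the boundary case $m=1$, $n-m=k+1$).

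The gap is exactly the step you flag yourself: the construction of the family $\psi_t$. Routing it through simultaneous standardization and Lemma \ref{prekiss} does not work as written, for three reasons. First, Lemma \ref{prekiss} produces one automorphism for one set of jet data; it asserts nothing about holomorphic (let alone linear) dependence on the coefficients $t_{\alpha,\beta}$, and $\psi_0=\mathrm{Id}$ is not part of its conclusion. Second, the points to be standardized are of mixed type: $\hat p$ and the $a_i$ are single points of $\C^k\times\C^n$ while the $b_j(w)$ are parametrized, so the $a_i$ must be promoted to constant parametrized points, and nothing prevents a collision $\hat a_i=b_j(w)$ for some $w$, which violates the distinctness hypothesis of Proposition \ref{standp}. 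Third, conjugating by the standardizing automorphism destroys the normalization $T_{\hat p}M(w_0)=\C^m\times\{0\}$ on which your identification ``perturbation $=$ shift of the Taylor coefficients by $t$'' rests. The paper avoids all of this with an elementary device: since the desired perturbation is purely normal-valued with coefficients depending only on the tangential coordinates, one sets
$\psi(t,w,z)=\bigl(w,\,z+\sum_{(\alpha,u)}t_{(\alpha,u)}h_{\alpha,u}(w,z_1,\dots,z_m)e_u\bigr)$,
which is automatically an element of $\mathrm{Aut}^k_{hol}(\C^n)$ for every $t$ (a shear with explicit inverse), is linear in $t$, and equals the identity at $t=0$. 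The functions $h_{\alpha,u}\in\hol(\C^k\times\C^m)$ are obtained by ordinary interpolation on the Stein manifold $\C^k\times\C^m$: $h_{\alpha,u}(w,z)-z^{\alpha}$ vanishes to order $l+1$ at $(w_0,0)$, $h_{\alpha,u}$ vanishes to order $l+1$ along the graphs $w\mapsto\pi_1(b_i(w))$ (after a linear change ensuring $\pi_1(b_i(w))\neq 0$ near $w_0$), and vanishes on $\C^k\times\{\pi_1(a_i)\}$. These conditions yield properties 1 and 2 of Lemma \ref{AKlemma} directly, and your computation of $\partial\hat D/\partial t=\mathrm{Id}$ at $(p,0)$ then goes through verbatim.
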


\begin{proof}
If $M(w)$ does not osculate of order $l$ in $p$ let
$\psi_t=\rm{Id}$ for every $t\in \C^N$.

Now suppose that $M(w)$ osculates of order $l$ in $p$.  Without
loss of generality  assume that $p=(w_0,0)$ and that the tangent
plane is given by $T_{(w_0,0)}M(w)=\{ (z_1,\ldots ,z_m,0,\ldots
,0)\}$. Let $\pi_1:\C^n \to  \C^m$ denote the projection to the
$m$ first coordinates of $\C^n$. After a linear change of
variables we can assume that $\pi_{1}(b_i(w))\neq 0\in  \C^m$ for
$1\leq i\leq q$ and for $w\in \tilde V_p$ where $\tilde V_p$ is
some open neighborhood in $K_1$.

To control if $\psi (M(w))$ osculates of order $l$ in some point
$\psi (p')$, $p'\in \C^k \times M(w)$ for a given automorphism
$\psi \in \rm{Aut}^k(\C^n)$, consider the map $F^{\psi}:\C^k\times
M(w)\to \C^N$ where the coordinate functions $F^{\psi}_{\alpha
,u}$, enumerated by pairs $(\alpha ,u)$, where $\alpha$ is a multi
index $\alpha =(\alpha_1,\alpha_2,\ldots ,\alpha_m)$ with $2\leq
|\alpha |\leq l$ and $u\in \N$ satisfies that $m+1\leq u \leq n$,
are given by
\begin{equation*}
F^{\psi}_{\alpha ,u}(w,\zeta )=\det
\begin{pmatrix}
\frac{\partial}{\partial \zeta_1}(\psi )_1(w,\zeta ) & \dots & \frac{\partial}{\partial \zeta_1}(\psi )_m(w,\zeta ) &\frac{\partial}{\partial \zeta_1}(\psi )_u(w,\zeta ) \\
\vdots & \ddots & \vdots & \vdots \\
\frac{\partial}{\partial \zeta_m}(\psi )_1(w,\zeta ) & \dots & \frac{\partial}{\partial \zeta_m}(\psi )_m(w,\zeta ) &\frac{\partial}{\partial \zeta_m}(\psi )_u(w,\zeta ) \\
\frac{\partial}{\partial \zeta^{\alpha}}(\psi )_1(w,\zeta ) &
\dots & \frac{\partial}{\partial \zeta^{\alpha}}(\psi )_m(w,\zeta
) &\frac{\partial}{\partial \zeta^{\alpha}}(\psi )_u(w,\zeta )
\end{pmatrix}
\end{equation*}
Here $(\psi )_i$ denotes the $i$-th $z$-coordinate function of the
map $\psi :\C^k\times \C^n \to \C^k\times \C^n$ and
$(\zeta_i)_{1\leq i\leq m}$ is some fixed system of local
coordinates of $M(w)$ near $p$. Then $\psi (M(w))$ osculates of
order $l$ in some point $(w,\zeta )$ if and only if
$F^{\psi}(w,\zeta )=0$.

If we restrict our attention to a small enough neighborhood
$\tilde U_p\in K_2$, containing $0$, we can use $z_1,\ldots
,z_m$ as local coordinates on $M(w_0)$ near $p$, in fact we will use
the coordinates $(w,z_1,\ldots ,z_m)$ in the restriction to
$\tilde V_p\times \tilde U_p$. To construct the family of
automorphisms $\psi_t$ of $\C^k\times \C^n$ we do the following:
For every pair $(\alpha ,u)$ we choose a holomorphic function
$h_{\alpha , u}$ on $\C^k\times \C^m$ such that
\begin{enumerate}[1.]
\item $(w,h_{\alpha ,u}(w,z))=(w,z^{\alpha})$ of order at least
$l+1$ in $(w_0,0)$. \item $(w,h_{\alpha
,u}(w,\pi_1(b_i(w))))=(w,0)$ of order at least $l+1$ for  $1\leq
i\leq q$ and for all $w\in \C^k$. \item $(w,h_{\alpha
,u}(w,\pi_1(a_i))=(w,0)$ for  $1\leq i\leq r$ and for all $w\in
\C^k$.
\end{enumerate}
Now define the map $\psi :\C^N\times \C^k \times \C^n \to \C^k
\times \C^n$ by
$$\psi(t,w,z)=(w,z+\sum_{(\alpha ,u)}t_{(\alpha ,u)}h_{\alpha ,u}(w,z_1 ,\ldots ,z_m)e_u),$$
where $e_u$ is the $u$-th unit vector, $m+1\leq u \leq n$.

This construction gives that for every $t\in \C^N$ the map
$\psi_t=\psi (t,\cdot )$ is a parametrized automorphism of $\C^n$
and because of condition 2. and 3. $\psi$ fulfills conditions 1.
and 2. of Lemma \ref{AKlemma}. Furthermore $\psi_0=\rm Id$. The
only thing left to check is that $\psi$ fulfills condition 3..

Using the fact that $h_{\alpha ,u}(w,z)=z^{\alpha}$ we get that
\begin{multline*}
\frac{\partial}{\partial t_{(\alpha ,u)}}F^{\psi_t}_{\alpha ,u}|_{\substack{z=0\\ w_0=0}}=-\frac{\partial}{\partial t_{(\alpha ,u)}}\sum t_{(\alpha ,u)}\frac{\partial}{\partial z^{\alpha}}h_{\alpha ,u}(w,z)|_{\substack{z=0\\ w_0=0}}=\\
=\frac{\partial}{\partial t_{(\alpha ,u)}}\sum t_{(\alpha
,u)}\frac{\partial z^{\alpha}}{\partial
z^{\alpha}}|_{\substack{z=0\\ w_0=0}}=\alpha !.
\end{multline*}
Moreover the derivative with respect to $t$ depending on other
pairs $(\alpha ',u')$ will vanish
\begin{multline*}\frac{\partial}{\partial t_{(\alpha ' ,u')}}F^{\psi_t}_{\alpha ,u}|_{\substack{z=0\\ w_0=0}}=-\frac{\partial}{\partial t_{(\alpha ',u')}}\sum t_{(\alpha ,u)}\frac{\partial}{\partial z^{\alpha}}h_{\alpha ,u}(w,z)|_{\substack{z=0\\ w_0=0}}=\\
=\frac{\partial}{\partial t_{(\alpha ',u')}}\sum t_{(\alpha
,u)}\frac{\partial z^{\alpha}}{\partial
z^{\alpha}}|_{\substack{z=0\\ w_0=0}}=0,
\end{multline*}
whenever $u\neq u'$ or whenever $u=u', |\alpha '|\leq |\alpha |$
and $\alpha '\neq \alpha$.

This implies that the map $\Phi :\C^N\times \C^k\times M\to \C^N$
defined by $\Phi (t,w,z)=F^{\Phi_t}(w,z)$ has maximal rank near
$(0,w_0,0)=(0,p)$. Thus there exists an open neighborhood
$\Omega_p$ of the form $\Omega_p=T\times V_p\times U_p$ of $(0,p)$
in $\C^N\times \C^k\times M$ such that $\Phi |_{\Omega_p}$ is
transversal to $0\in \C^N$. This implies that for almost all $t\in
T$ the map $F^{\psi_t}:\C^k\times M(w)\to \C^N$ is transversal to
$0$. Since $m<N$ this means that for almost all $t\in T$ the image
$F^{\psi_t}(V_p\times U_p)$ does not meet $0$, i.e.
$\psi_t((w,M))$ does not osculate of order $l$ for any $p'\in
V_p\times U_p$.
\end{proof}

\begin{proof}[Proof of Lemma \ref{AKlemma}]
Choose finitely many open subsets $V_i\times U_i$ of $\C^k\times
M$ together with families $\psiî:T_i \times \C^k\times \C^n \to
\C^k\times \C^n$ of automorphisms $i=1,2,\ldots ,r$ as in Lemma
\ref{sublemma} and choose compact subsets $K_i\subset  V_i\times
U_i$ of $V_i\times U_i$ which cover $K_M$. Since $\psi^1_0$ is the
identity, for $t$ sufficiently small the automorphism $\psi^1_t$
moves no point of $K_M$ more than $\frac{\epsilon}{r}$. So we find
$t_1\in T_1$ such that $|P_2
\phi^1_{t_1}(w,z)-z|<\frac{\epsilon}{r}$ for every $(w,z)\in K$
and the submanifold $\psi^1_{t_1}(\C^k\times M)$ does not osculate
of order $l$ at any point of $\psi^1_{t_1}(K_1)$.

Observe that the property of not osculating of order $l$ at some
point is preserved under small perturbations, i.e. for each
compact subset $L$ of a submanifold $M$ of $\C^n$ which does not
osculate of order $l$ at any point of $L$ there exists some
$\epsilon >0$ such that for each automorphism $\psi$ of $\C^n$ the
property $|\psi (z)-z|<\epsilon$ for every $z\in L$ implies that
$\psi (M)$ remains non-osculating of order $l$ at any point of
$\psi (L)$ (for holomorphic maps small perturbations in values
imply small perturbations in derivatives).

Hence we find a sufficiently small $t_2\in T_2$ such that first
$|P_2 \psi^2_{t_2}(w,z)-z|<\frac{\epsilon}{r}$ for every $z\in
\psi^1(K)$, second the submanifold $\psi^2_{t_2}\circ
\psi^1_{t_1}(\C^k\times M)$ does not osculate of order $l$ at any
point of $ \psi^2_{t_2}\circ \psi^1_{t_1}(K_2)$ and third
$\psi^2_{t_2}\circ \psi^1_{t_1}(\C^k\times M)$ remains
non-osculating of order $l$ at any point of $ \psi^2_{t_2}\circ
\psi^1_{t_1}(K_1)$. Proceeding by induction we find an
automorphism $\psi :=\psi^r_{t_r}\circ \psi^{r-1}_{t_{r-1}}\circ
\cdots \circ \psi^1_{t_1}$ moving no point of $K$ more than
$\epsilon$ and such that $\psi (\C^k\times M)$ does not osculate
of order $l$ at any point of $\psi \left( \cup_{i=1}^rK_i\right)
\supset \psi (K_M)$. Since all automorphisms $\psiî_t$ satisfy
properties 1. and 2., $\psi$ satisfies them as well.
\end{proof}

\end{document}